\DeclareMathOperator{\I}{\mathbbm{1}}%
\DeclareMathOperator{\Law}{Law}%
\DeclareMathOperator{\curl}{curl}
\DeclareMathOperator{\Span}{Span}
\DeclareMathOperator{\Range}{Range}
\newcommand{\norm}[1] {|\!|\!|#1|\!|\!|}
\newcommand{\blue}{}
\def\E{\hskip.15ex\mathsf{E}\hskip.10ex}
\def\P{\mathsf{P}}
\def\Q{\mathsf{Q}}
\def\eps{\varepsilon}
\def\phi{\varphi}
\newtheoremstyle{Assump}%
  {3pt}
  {3pt}
  {\itshape}
  {}
  {\bfseries}
  {.}
  {.5em}
  {\thmname{#1} \thmnumber{#2} \thmnote{\normalfont#3}}
\newtheorem{Theorem} {Theorem}[section]
\newtheorem{Lemma}[Theorem]{Lemma}
\newtheorem{Proposition}[Theorem]{Proposition}
\theoremstyle{definition}
\theoremstyle{definition}\newtheorem{Remark}[Theorem]{Remark}
\theoremstyle{definition}\newtheorem{Definition}{Definition}[section]
\newenvironment{Assumption}[1]
  {\innercustomthm}
  {\endinnercustomthm}
\renewcommand{\theequation}{\thesection.\arabic{equation}}
\numberwithin{equation}{section}
\renewcommand{\ge}{\geqslant}
\renewcommand{\le}{\leqslant}
\renewcommand{\leq}{\le}
\renewcommand{\geq}{\ge}
\newcommand{\nn}{\nonumber}
\newcommand{\wt}{\widetilde}
\newcommand{\wh}{\widehat}
\newcommand{\dd}{d}
\renewcommand{\d}{\partial}
\newcommand{\B}{\mathcal{B}}
\newcommand{\C}{\mathcal{C}}
\newcommand{\D}{D}
\newcommand{\N}{\mathbb{N}}
\newcommand{\R}{\mathbb{R}}
\newcommand{\Z}{\mathbb{Z}}
\newcommand{\grad}{\nabla}
\newcommand{\evalat}[1]{_{\mbox{${\mkern1.5mu}\vert_{#1}$}}\!}
\newcommand{\be}{\begin{equation}}
\newcommand{\ee}{\end{equation}}
\newcommand{\ba}{\begin{aligned}}
\newcommand{\ea}{\end{aligned}}
\renewcommand{\u}{\mathbf{u}}
\newcommand{\bx}{\mathbf{x}}
\newcommand{\bX}{\mathbf{X}}
\newcommand{\by}{\mathbf{y}}
\newcommand{\bY}{\mathbf{Y}}
\newcommand{\bsigma}{\boldsymbol{\sigma}}
\title{Generalized couplings and ergodic rates for SPDEs and other  Markov models}
\date{26 March 2019}
\author[1,3,$*$]{Oleg Butkovsky}
\author[2,$**$]{Alexei Kulik}
\author[1,$**${}$*$]{Michael Scheutzow}
\affil[1]{\small {Technische Universit\"at Berlin,

Institut f\"ur Mathematik, MA 7-5, Fakult\"at II,

Strasse des 17.~Juni 136, 10623 Berlin, FRG. \bigskip
}}
\affil[2]{\small{Wroclaw University of Science and Technology,

Faculty of Pure and Applied Mathematics,

Wybrze\'ze Wyspia\'nskiego Str. 27, 50-370 Wroclaw, Poland.\bigskip
}}
\affil[3]{\small{Technion --- Israel Institute of Technology,

Faculty of Industrial Engineering and Management

Haifa, 3200003, Israel.\bigskip
}}
\begin{document}
\maketitle
\renewcommand{\thefootnote}{*}
\footnotetext{Email: \texttt{oleg.butkovskiy@gmail.com}. Supported in part by DFG Research Unit FOR 2402.}

\renewcommand{\thefootnote}{**}
\footnotetext{Email: \texttt{kulik.alex.m@gmail.com}.   }
\renewcommand{\thefootnote}{***}
\footnotetext{Email: \texttt{ms@math.tu-berlin.de}. Supported in part by DFG Research Unit FOR 2402.}

\begin{abstract}
We establish verifiable general  sufficient conditions for exponential or subexponential ergodicity of Markov processes that may lack the strong Feller property. We apply the obtained results to show exponential ergodicity of a variety of nonlinear stochastic partial differential equations with additive forcing, including 2D stochastic Navier--Stokes equations. Our main tool is a new version of the generalized coupling method.

\end{abstract}

\section{Introduction}

There
are many Markov processes that are not strong Feller and whose transition probabilities are mutually singular. Their ergodic properties cannot be analyzed with classical methods from, e.g., \cite{MT} and require  special treatment. One of the first papers in this direction was \cite{HMS11}; the results obtained there have been later generalized and extended in \cite{Bu14}, \cite{DFM}, \cite[Chapter~4]{K17}. These works provide sufficient conditions for exponential or subexponential convergence of transition probabilities of a Markov process towards its invariant measure in the Wasserstein metric.

Unfortunately,  checking these conditions in practice turns out to be rather difficult. One of the main results of this paper (Theorem~\ref{T:Lyap})   provides a set of \textit{verifiable} sufficient conditions for exponential or subexponential ergodicity. Furthermore, we develop a special framework to ease the application of these conditions to stochastic partial differential equations (SPDEs). To illustrate the applicability of our framework we establish exponential ergodicity of {\blue five} important nonlinear SPDE models, including the 2D stochastic Navier--Stokes equation. This generalizes the corresponding theorems from \cite{GMR17} and \cite[Section~6]{KS17}. To obtain these results we develop a new version of the \textit{generalized coupling} method, extending the ideas of \cite{Mat2002} and \cite{H02}.

\medskip
Let us recall that the classical approach to the analysis of ergodic properties of Markov processes (\cite{Numellin}, \cite{MT}) combines a  local mixing  condition on a certain set (a \emph{small set}) with a recurrence condition that the Markov process visits this set often enough.
 If these conditions are satisfied, then the Markov process has a unique invariant probability measure (IPM) and the transition probabilities converge to the IPM in total variation; the rate of convergence is determined by the recurrence rate, i.e., how quickly the Markov process returns to the small set. This can be quantified in terms of a Lyapunov function or the moments of the first return times.

The classical approach  works quite well for strong Feller Markov processes such as stochastic differential equations with uniformly elliptic noise, MCMC in finite dimensions, non-linear autoregressive models, etc.
If a Markov process lacks the strong Feller property, then usually the local mixing condition does not hold. Possible examples of processes with ``bad'' mixing properties include many infinite-dimensional Markov processes such as SPDEs, stochastic functional differential equations (SFDEs), MCMC in infinite dimensional Hilbert spaces, Markov processes with switching, etc. Typically the transition probabilities of these processes  with different initial points are  mutually singular, and thus ergodicity in total variation clearly fails.   For just one example of such a system, we refer to \cite{Scheu}, where the \emph{intrinsic memory} effect was observed for an SFDE: once the value of the segment process is known at some time moment $t$, the values at all the previous time moments are uniquely and measurably defined; see also the discussions in \cite{HMS11} and \cite[Section 4.1]{K17}. Clearly, the intrinsic memory effect yields mutual singularity of transition probabilities for the Markov process; in addition, such an effect prevents the mixing property of the system, which is a prerequisite for the classical approach.

To overcome this difficulty and to analyze the ergodic behaviour of Markov processes without the strong Feller property, we adopt the method developed by  M.~Hairer, J.~Mattingly and M.~Scheutzow in \cite{HMS11}. The method is  based on the concept of a \textit{$d$--small set}, which is a much more general object than a small set, see Section~\ref{S:1} below for the precise definition. The strategy in a sense mimics the classical one; namely, it shows that  if a Markov process visits a $d$-small set often enough, then, under some additional constraint on a Markov kernel (the so-called \textit{non-expansion property}), there exists  a unique IPM  and the transition probabilities weakly converge to the IPM  with a rate that is quantified by means of a properly chosen probability metric; again the convergence rate is determined by the recurrence properties of the chain. For  further extensions and generalizations within this approach, we refer to \cite{Bu14}, \cite{DFM}, \cite[Chapter 4]{K17}.

The crucial step in the above approach is to construct a \emph{distance-like} function $d$ such that the Markov kernel indeed has the non-expansion property w.r.t. $d$ and that a certain set is indeed $d$--small. This step can be quite challenging; we refer to \cite[Section~5.2]{HMS11}, where such a construction is provided for an SFDE, though in a complicated and nontransparent way, which makes  further extensions and modifications very difficult. One of the main results of our paper, Theorem \ref{L:contr}, provides a transparent and fairly simple algorithm for the construction of a distance-like function $d$ with required properties. We strongly believe this to be a substantial complement to the approach initiated in  \cite{HMS11} and developed in \cite{Bu14}, \cite{DFM}, \cite{K17}, which hopefully makes this method well applicable for a wide variety of Markov systems.

The latter assertion is supported by several particular Markov models, analyzed in the second part of the paper.
First, as a test case for our approach we reconsider SFDEs. We obtain a simple and transparent proof of their weak ergodicity at exponential, sub-exponential, and polynomial rates. Second, we consider five particular nonlinear SPDE models where the analysis of the ergodic properties is substantially more difficult due to the presence of strongly singular terms, such as the non-linear gradient term $(\u\cdot  \nabla)\u$ in the  Navier--Stokes equation. Using the general algorithm developed in the first part of the paper, we manage to overcome these difficulties, and establish exponential ergodicity in each of these five models. This in particular extends  the results from \cite{GMR17} and \cite[Section~6]{KS17}, where  unique ergodicity and  weak convergence to the IPM without a specified rate are proved for the same models.

Our approach is based on the concept of a \emph{generalized coupling}, which we briefly explain here. By a \emph{coupling} one usually means a pair of stochastic processes  with prescribed laws of the components; that is, the marginal laws.  Heuristically, by a \emph{generalized coupling} we will mean a pair of stochastic processes, whose marginal laws are not necessarily equal to a prescribed pair of probability distributions, but are in a sense close to this pair. Under the name \emph{asymptotic coupling}, this concept was introduced in  \cite{HMS11} in the spirit of earlier works \cite{H02}, \cite{Mat2002}, \cite{Mat2003}, and was later developed in \cite{GMR17}. Originally, the  marginal laws of the asymptotic coupling were assumed to be absolutely continuous with respect to the prescribed probability distributions; this, in a combination with the Birkhoff theorem, appears to be a convenient tool for proving  unique ergodicity. Moreover, it was shown in \cite{KS17} that the same notion can be used to guarantee weak stabilization of the transition probabilities; since this notion was used in a non-asymptotic way, the name was changed there to a \emph{generalized coupling}.

In the paper we further develop the above ideas and introduce a new definition of a generalized coupling.  We require   certain non-asymptotic total variation bounds  between the marginal distributions and the prescribed laws to hold true; see Section \ref{S:MR}. We show that the existence of  a ``good'' generalized coupling makes it easy to construct a non-expanding distance-like function $d$ with a large class of $d$-small sets; this, combined  with  recurrent--type conditions, provides ergodic rates. Thus there are two important advantages of our method compared to previous techniques. On the one hand,  generalized couplings with the required properties can be constructed quite efficiently using stochastic-control-type arguments, which makes the entire approach quite flexible and easy to apply.
On the other  hand, unlike  previous results based on  asymptotic/generalized couplings, our method provides  explicit bounds on convergence rates, rather than just uniqueness of the IPM or weak convergence of transition probabilities. Let us mention briefly, that the ergodic rates for a Markov process have further natural applications to limit theorems for additive functionals of the process and other, more complicated systems with Markov perturbations. In this context, there is a clearly seen difference between the stabilization of transition probabilities (with non-specified rate) on the one hand,  and explicit ergodic rates on the other hand. The first one typically guarantees the Law of Large Numbers (or more generally, \emph{averaging principle}), while the second one gives rise to the Central Limit Theorem and \emph{diffusion approximation}-type theorems; see \cite[Section 5.4, comment 2 and Remark 6.3.9]{K17}. For further discussion, references, and systematic treatment of the limit theorems for systems with weakly ergodic Markov perturbations we refer to \cite[Chapters 5,6]{K17}.

The rest of the paper is organized as follows. In Section 2.1 we recall well-known results  concerning the convergence of Markov processes. We present our main results in Section 2.2. In Section 3 we give an application to SFDEs. In Section 4 we develop a general framework for proving exponential ergodicity for nonlinear SPDEs and establish exponential ergodicity of a number of important SPDEs.

\bigskip

\noindent \textbf{Acknowledgments}. The authors are grateful to Jonathan Mattingly for helpful discussions, to
Mikhail Lifshits for fruitful comments, {\blue and to Alexander Shaposhnikov for bringing the article \cite{Us} to our attention. The authors also would like to thank the anonymous referee for the valuable remarks  and
suggestions which helped to improve the quality of the paper.}
Part of the work on the project has been done during the visit of OB to the Fields Institute
(Toronto, Canada) in October~2017. OB is very grateful to the Fields institute and especially to Bryan Eelhart for their support, hospitality, and incredible coffee breaks. The work on the project has been finished  during the visit of AK to the Technical University of Dresden (Germany); AK is very grateful to to the Technical University of Dresden and especially to Ren\'e Schilling for their support and hospitality.

\section{A general setup for ergodicity and estimating convergence rates}\label{S:1}

This section consists of two parts. In the first part we introduce the basic notation and recall some useful definitions and important theorems. In the second part  we present our main results.

\subsection{Generalities and notation}\label{S:11}

Let $(E,\rho)$ be a Polish space and $\mathcal{E}=\B(E)$ be the corresponding Borel $\sigma$-field. Consider a Markov transition function $\{P_t (x, A), x\in E, A\in \mathcal{E}\}_{t\in \R_+}$. We use the standard notation for the corresponding semigroup of integral operators
\begin{equation*}
P_t \phi(x)=\int_E \phi(y) P_t(x,dy),\quad x\in E,\, t\in\R_+.
\end{equation*}
We also denote by $\{\P_x, x\in E\}$ the corresponding Markov family, i.e.,  $\P_x$ denotes the law of the Markov process $X=\{X_t, t\geq 0\}$ with $X_0=x$ and the given transition function. The law of $X$ will be understood in the sense of finite-dimensional distributions;  that is, we will not rely on the trajectory-wise properties of $X$.

A function $d:E\times E\to\R_+$ is called \emph{distance-like} if it is symmetric, lower semicontinuous, and $d(x,y)=0\Leftrightarrow x=y$; see \cite[Definition~4.3]{HMS11}.
Denote by $\mathcal{P}(E)$ the set of all probability measures on $(E,\mathcal{E})$, and  for $\mu,\nu\in  \mathcal{P}(E)$ denote by $\C(\mu,\nu)$  the set of all \textit{couplings} between $\mu$ and $\nu$, i.e.  probability measures on $(E\times E,\mathcal{E}\otimes\mathcal{E})$ with marginals $\mu$ and $\nu$. For a given distance-like function $d$, the corresponding \emph{coupling distance}  $W_d:\mathcal{P}(E)\times\mathcal{P}(E)\to\R_+\cup\{\infty\}$ is defined by
\begin{equation*}
W_d(\mu,\nu):=\inf_{\lambda\in\C(\mu,\nu)}\int_{E\times E} d(x,y) \lambda(dx,dy),\quad \mu,\nu\in\mathcal{P}(E).
\end{equation*}
  If $d$ is a lower semicontinuous metric on $E$, then $W_d$ is the usual \textit{Wasserstein-$1$} (or \textit{Kantorovich}) distance. In particular, if $d$ is the discrete metric, i.e., $d(x,y)=\I(x\neq y)$, then $W_d$ coincides with
the \textit{total variation} distance $d_{TV}$; the latter can also be defined
as follows:
\begin{equation*}
d_{TV}(\mu,\nu):=\sup_{A\in\mathcal{E}}|\mu(A)-\nu(A)|.
\end{equation*}

Recall that if the original metric $\rho$ is  bounded, then convergence in total variation implies convergence in $W_{\rho}$; furthermore in this case convergence in $W_{\rho}$ is equivalent to   weak convergence.

As explained in the introduction, if a Markov process lacks the strong Feller property, then the total variation metric is too strong to measure the distance between the transition probabilities of the process and its invariant measure; this is the reason for us to focus on the study of weaker distances $W_d$. As mentioned above, \cite{HMS11} and subsequent works \cite{Bu14}, \cite{DFM}, \cite[Chapter~4]{K17} provide sufficient  conditions for convergence of the law of a Markov process to its invariant measure in $W_d$ for properly chosen $d$. For the convenience of the reader we formulate here two results of such  type, which actually  motivate the form in which our main results are presented below. For additional details of the theory, discussions, and examples we refer to the above--mentioned references.

Define for a
measurable function $\phi\colon\R_+\to(0,\infty)$
\begin{equation}\label{Hphi}
H_\phi(x):=\int_1^x \frac{1}{\phi(u)}\,du,\quad x\ge1.
\end{equation}

\begin{Proposition}[{\cite[Theorem~2.4]{Bu14}, \cite[Theorem~4.5.2]{K17}}]\label{Prop:21}
Assume that the Markov semigroup $P$ is Feller. Suppose there exist a measurable function $V\colon E\to[0,\infty)$ and a bounded distance--like function $d$ on $E$ such that the following conditions hold:
\begin{itemize}
 \item[{\rm1.}] $V$ satisfies the Lyapunov condition, that is, there exist a concave differentiable function
 $\phi\colon\R_+\to\R_+$ increasing to infinity with $\phi(0)=0$ and a constant $K>0$ such that
 for any $t\ge0$, $x\in E$
 \begin{equation}\label{Lyapf}
  P_tV(x)\le V(x)-\int_0^t P_s (\phi\circ V)(x)\,ds + K t.
 \end{equation}
\item[{\rm2.}] One has $\rho\le d$, where $\rho$ is the original metric on the Polish space $E$.

\item[{\rm3.}] There exist $0<t_1<t_2<\infty$ such that for all $t\in[t_1,t_2]$
\begin{equation}\label{nonexpanding}
 W_d(P_t(x,\cdot),P_t(y,\cdot))\le d (x,y),\quad x,y\in E
\end{equation}

\item[{\rm4.}] There exists $t>0$ such that for any $M>0$ there exists $\eps=\eps(M,t)>0$ such that
\begin{equation}\label{dsmall}
 W_d(P_{t}(x,\cdot),P_{t}(y,\cdot))\le (1-\eps)d (x,y),\quad x,y\in \{V\le M\}
\end{equation}
\end{itemize}

Then the Markov semigroup $(P_t)$ has a unique invariant measure $\pi$. Moreover, for any  $\delta\in(0,1)$ there exist constants $C_1,C_2>0$ such that for any $x\in E$,
\begin{equation*}
W_{d} (P_t(x,\cdot),\pi)\le\frac{C_1 (1+\phi(V(x))^\delta)}{\phi(H_\phi^{-1}(C_2 t))^{\delta}},\quad t\ge0.
\end{equation*}
\end{Proposition}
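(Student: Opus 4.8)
The plan is to deduce the statement from a discrete--time \emph{weak Harris theorem} with subgeometric rate; this is essentially the form in which the result is obtained in \cite{Bu14} and \cite[Chapter~4]{K17}, and I only sketch the argument. The first step is to pass to a skeleton chain. The non-expansion property \eqref{nonexpanding}, valid on $[t_1,t_2]$, propagates to all sufficiently large $t$: if $s_1,s_2\in[t_1,t_2]$ then $W_d(P_{s_1+s_2}(x,\cdot),P_{s_1+s_2}(y,\cdot))\le W_d(P_{s_2}(x,\cdot),P_{s_2}(y,\cdot))\le d(x,y)$ since $P_{s_1}$ is non-expanding, and iterating this covers $[Nt_1,\infty)$ for $N$ large. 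Appending a long non-expanding block to the contraction time from \eqref{dsmall} then produces a single time $t_*$ at which both \eqref{nonexpanding} holds and \eqref{dsmall} holds on every sublevel set $\{V\le M\}$; set $Q:=P_{t_*}$. From the Lyapunov condition \eqref{Lyapf} one extracts a discrete drift $QV\le V-c\,(\phi\circ V)+K'$; the only subtlety is that concavity of $\phi$ sends Jensen's inequality the wrong way, so one argues instead through the differential inequality $\tfrac{d}{ds}P_sV\le -P_s(\phi\circ V)+K$ together with monotonicity of $\phi$ and the crude bound $P_sV\le V+Ks$.

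The second step, following \cite{HMS11}, constructs a distance--like function $\tilde d$ adapted to $d$, $V$ and $\phi$. In the geometric case $\phi(u)\asymp u$ one takes $\tilde d_\beta(x,y)=\sqrt{d(x,y)\,(1+\beta V(x)+\beta V(y))}$ for small $\beta>0$; for a genuinely subgeometric $\phi$ the $V$--weight is replaced by a suitable concave function of $V(x)+V(y)$ compatible with the $\phi$--drift. The core estimate is a one--step inequality of the form
\begin{equation*}
W_{\tilde d}\big(Q(x,\cdot),Q(y,\cdot)\big)\le\tilde d(x,y)-\kappa\,\chi\big(\tilde d(x,y)\big),\qquad x,y\in E,
\end{equation*}
with $\kappa>0$ and $\chi$ comparable to $\phi$ near infinity. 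It is proved by a regime split: if $x,y$ both lie in a large set $\{V\le M\}$ and $d(x,y)$ is not too small, \eqref{dsmall} gives a genuine multiplicative contraction; if $x$ or $y$ leaves the set, then $d$ is bounded, so $\tilde d$ is dominated by its $V$--weight, which drops because of the discrete drift; the residual regime $d(x,y)$ very small is handled by combining \eqref{nonexpanding} with the strict decrease of the weight. Condition~2, $\rho\le d\le\tilde d$, is what lets $W_{\tilde d}$ dominate $W_\rho$ and $W_d$, while the Feller property is used for the measurable selection of optimal couplings when composing kernels.

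The third step is the iteration. Applying the one--step inequality $n$ times to $W_{\tilde d}(Q^n(x,\cdot),Q^n(y,\cdot))$ and to $W_{\tilde d}(Q^n(x_0,\cdot),Q^{n+k}(x_0,\cdot))$ --- all of which start from quantities bounded, uniformly in $k$, via the Lyapunov estimate --- simultaneously shows that $(Q^n(x_0,\cdot))_n$ is $W_{\tilde d}$--Cauchy and that the decay of the recursion $a_{n+1}\le a_n-\kappa\chi(a_n)$ governs the speed of convergence. By the Feller property the limit of $(Q^n(x_0,\cdot))_n$ is an invariant measure $\pi$ of $Q$, unique because $\chi>0$, and invariant for all $P_t$ by \eqref{nonexpanding}. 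The decay of the recursion is identified, by comparison with the equation $\dot y=-\phi(y)$ (the drift constant being negligible at large $y$), with the standard subgeometric rate $t\mapsto\phi(H_\phi^{-1}(C_2 t))^{-1}$; see \cite{DFM}. The exponent $\delta\in(0,1)$ enters as an interpolation parameter: re-running the scheme with $\tilde d^{\,\delta}$, which is still distance--like as a concave function of $\tilde d$, trades a marginally slower rate for the weaker, $\phi(V(x))^\delta$ in place of $\phi(V(x))$, dependence on the initial point, and yields the estimate at skeleton times $t=nt_*$. Finally, writing a general $t$ as $nt_*+r$ with $0\le r<t_*$ and invoking \eqref{nonexpanding} over the residual interval extends the bound to all $t\ge0$ after adjusting $C_1,C_2$.

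I expect the real obstacle to be the quantitative part of the third step: converting the additive decrement $\kappa\chi(a_n)$ into the sharp rate $\phi(H_\phi^{-1}(C_2 t))^{-\delta}$ needs a careful comparison of the discrete recursion with its continuous counterpart, control of $H_\phi$, $H_\phi^{-1}$ and of their behaviour under the $\delta$--power, and a check that $\chi$ can be taken genuinely comparable to $\phi$ --- not merely bounded by it --- in the one--step inequality. The continuous--to--discrete reduction of \eqref{Lyapf} is mildly delicate for the reason above but routine, and the regime analysis in the one--step inequality and the passage from skeleton times to all $t$ are standard once $\tilde d$ has been chosen correctly.
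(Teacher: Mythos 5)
You should note at the outset that the paper contains no proof of Proposition~\ref{Prop:21}: it is quoted verbatim from \cite[Theorem~2.4]{Bu14} and \cite[Theorem~4.5.2]{K17} and used as a black box, so there is no in-paper argument to compare against; the benchmark is the proofs in those references. Your outline does reproduce their general weak-Harris architecture (skeleton chain, a modified distance built from $d$, $V$ and $\phi$, contraction on level sets, recurrence from the Lyapunov condition, rate expressed through $H_\phi$), and the preliminary reductions are sound: propagating \eqref{nonexpanding} to all large times by composition, appending a non-expanding block to the contraction time of \eqref{dsmall}, and the measurable-selection point handled via the Feller property and lower semicontinuity of $d$.

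Two steps you label as routine or merely ``delicate'' are, however, genuine gaps. First, the continuous-to-discrete reduction: from \eqref{Lyapf}, monotonicity of $\phi$ and the crude bound $P_sV\le V+Ks$ you cannot conclude a skeleton drift $QV\le V-c\,\phi(V)+K'$. The obstruction is exactly the concavity you flag: for sublinear $\phi$, $\E_x\phi(V(X_s))$ can be negligible compared with $\phi(V(x))$ while $P_sV(x)$ stays equal to $V(x)$ (put mass $p$ at level $V(x)/p$ and mass $1-p$ at $0$; with $\phi(u)=\log(1+u)$ the $\phi$-moment tends to $0$ as $p\to0$), so the information you invoke simply does not imply $\E_x\phi(V(X_s))\ge c\,\phi(V(x))-C$, and whether the skeleton drift follows from the full hypothesis \eqref{Lyapf} would itself require proof. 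The cited proofs avoid this reduction: they exploit the integrated inequality \eqref{Lyapf} directly (it makes $V(X_t)+\int_0^t\phi(V(X_s))\,ds-Kt$ a supermartingale), derive $\phi$-modulated moments of return times to level sets, i.e.\ bounds phrased through $H_\phi$ from \eqref{Hphi}, and obtain the rate $\phi(H_\phi^{-1}(C_2t))^{-\delta}$ by an excursion/coupling-time argument rather than by iterating a recursion $a_{n+1}\le a_n-\kappa\chi(a_n)$. Second, the one-step inequality $W_{\tilde d}(Q(x,\cdot),Q(y,\cdot))\le\tilde d(x,y)-\kappa\,\chi(\tilde d(x,y))$ with $\chi$ genuinely comparable to $\phi$ is the entire quantitative content of the theorem and is asserted, not derived: your regime split yields a multiplicative contraction only on level sets of $V$, while off the level sets the decrement you can extract is of size $\phi(V(x))+\phi(V(y))$, which is not a function of $\tilde d(x,y)$ alone (small $d$, large $V$ gives moderate $\tilde d$), so no uniform $\chi$-recursion in $\tilde d$ follows from the pieces you have; this is precisely where the subgeometric case departs from the geometric construction of \cite{HMS11}. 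Until these two steps are replaced by actual arguments, or by an honest reduction to the discrete-time theorems of \cite{Bu14} or \cite{DFM}, the sketch does not amount to a proof of the proposition.
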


If the function $V$ satisfies a stronger version of the Lyapunov inequality, then condition~4 of the above proposition can be relaxed. More precisely, the following statement holds.

\begin{Proposition}[{\cite[Theorem~4.8]{HMS11}}]\label{Prop:22} Assume that condition~1 of Proposition~\ref{Prop:21} holds with  $\phi(x)=\gamma x$, $\gamma>0$. Then condition~4  of Proposition~\ref{Prop:21} can be replaced with the following weaker condition:
\begin{itemize}
\item[{\rm4*.}]\! There exist $\eps\!>\!0$, $t\!>\!0$ such that inequality \eqref{dsmall} is satisfied for
all  $x,y\!\in\!\{V\!\le\! 4K/\gamma\}$.
\end{itemize}
\end{Proposition}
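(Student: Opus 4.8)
The plan is to deduce the full conclusion of Proposition~\ref{Prop:21} directly under conditions~1 (with $\phi(x)=\gamma x$), 2, 3 and 4*, following the weak Harris theorem of \cite[Theorem~4.8]{HMS11}: the idea is to upgrade the \emph{local} contraction~4* on $\{V\le 4K/\gamma\}$ to a \emph{global} strict contraction of the transition operator in a suitably weighted coupling distance. The first ingredient is the geometric-drift consequence of a linear Lyapunov function. Writing $u(t):=P_tV(x)$, condition~1 becomes the integral inequality $u(t)\le u(0)-\gamma\int_0^t u(s)\,ds+Kt$, and a standard Gronwall-type argument gives
\begin{equation*}
P_tV(x)\le e^{-\gamma t}V(x)+\frac{K}{\gamma},\qquad x\in E,\ t\ge0 ;
\end{equation*}
in particular $\int V\,d\mu\le K/\gamma$ for every invariant $\mu$, and the $P_t(x,\cdot)$-mass outside $\{V\le 4K/\gamma\}$ decays geometrically in $t$.

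Assuming (as holds in all applications, or after a lower semicontinuous modification) that $V$ is lower semicontinuous, and normalizing $d\le 1$, I would next introduce for $\beta>0$ and $\delta\in(0,1)$ the distance-like function $\wt d_{\beta,\delta}(x,y):=d(x,y)^{1-\delta}(1+\beta V(x)+\beta V(y))^{\delta}$, for which $d\le d^{\,1-\delta}\le\wt d_{\beta,\delta}$. Testing $W_{\wt d_{\beta,\delta}}$ on the optimal $d$-coupling of $(P_tx,P_ty)$ and applying H\"older's inequality gives, for all $t$,
\begin{equation*}
W_{\wt d_{\beta,\delta}}(P_tx,P_ty)\le W_d(P_tx,P_ty)^{1-\delta}\,\big(1+\beta P_tV(x)+\beta P_tV(y)\big)^{\delta}.
\end{equation*}
The heart of the argument is to fix a time $t^*$ — which exists because, by composing couplings, the non-expansion in~3 holds for all sufficiently large $t$, and the contraction~4* then propagates forward in time — such that $W_d$ is non-expanding at $t^*$, condition~4* holds at $t^*$, and $e^{-\gamma t^*}<\tfrac12$, and then to split. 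If $V(x),V(y)\le 4K/\gamma$, use~4* and the drift bound $P_{t^*}V(x)+P_{t^*}V(y)\le 10K/\gamma$ together with $\wt d_{\beta,\delta}(x,y)\ge d(x,y)$. If instead $V(x)+V(y)>4K/\gamma$, use non-expansion of $W_d$, the drift, and $2\beta K/\gamma<\tfrac{\beta}{2}(V(x)+V(y))$ to dominate $1+\beta P_{t^*}V(x)+\beta P_{t^*}V(y)$ by $1+\beta(e^{-\gamma t^*}+\tfrac12)(V(x)+V(y))$, whose ratio to $1+\beta(V(x)+V(y))$ is $<1$ uniformly on $\{V(x)+V(y)\ge 4K/\gamma\}$. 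Choosing $\beta$ small enough that the $V$-moment overhead in the first case does not destroy the factor $(1-\eps)$, one obtains some $\bar\alpha=\bar\alpha(\beta,\delta)<1$ with $W_{\wt d_{\beta,\delta}}(P_{t^*}(x,\cdot),P_{t^*}(y,\cdot))\le\bar\alpha\,\wt d_{\beta,\delta}(x,y)$ for all $x,y$, and, gluing couplings, $W_{\wt d_{\beta,\delta}}(P_{t^*}\mu,P_{t^*}\nu)\le\bar\alpha\,W_{\wt d_{\beta,\delta}}(\mu,\nu)$.

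From here everything is routine. Iterating gives geometric contraction along $t^*\N$; since $\wt d_{\beta,\delta}$ dominates a metric inducing the topology of $E$ and the drift bounds $V$-moments uniformly, a standard completeness argument for weighted Wasserstein distances shows that $(P_{nt^*}\delta_{x_0})_n$ converges to some $\pi$, which is fixed by $P_{t^*}$, hence invariant for $(P_t)$, and unique (any two invariant measures are at finite $\wt d_{\beta,\delta}$-distance and that distance is contracted). For arbitrary $t$, write $t=nt^*+s$ with $s$ in a window on which $W_d$ is non-expanding; the H\"older estimate then gives $W_{\wt d_{\beta,\delta}}(P_s\mu,P_s\nu)\le (1+2\beta K/\gamma)^{\delta}W_{\wt d_{\beta,\delta}}(\mu,\nu)$, whence $W_{\wt d_{\beta,\delta}}(P_t(x,\cdot),\pi)\le C\,\bar\alpha^{\,t/t^*}(1+V(x))^{\delta}$; since $d\le\wt d_{\beta,\delta}$ and $H_{\phi}^{-1}(t)=e^{\gamma t}$ for $\phi(x)=\gamma x$, this is exactly the bound of Proposition~\ref{Prop:21} for the given $\delta$.

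The main obstacle is the contraction at $t^*$: unlike in Proposition~\ref{Prop:21}, the weighted distance $\wt d_{\beta,\delta}$ is \emph{not} itself non-expanding under $P_t$ — the additive constant $2\beta K/\gamma$ produced by the drift spoils non-expansion near $\{V=0\}$ — so one cannot simply verify the hypotheses of Proposition~\ref{Prop:21} for $\wt d_{\beta,\delta}$ and re-apply it; the two-case split has to be carried out by hand. The only real subtlety there is the calibration of $\beta$: it must be small enough that the contraction factor $(1-\eps)$ on $\{V\le 4K/\gamma\}$ survives multiplication by the $V$-moment overhead, yet the constant $2\beta K/\gamma$ must still be dominated by $\tfrac{\beta}{2}(V(x)+V(y))$ off that set. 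The drift computation, the iteration, the completeness argument, and the passage from lattice times to all $t$ are all standard.
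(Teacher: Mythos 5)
Your proof is correct and follows essentially the same route as the source the paper quotes: Proposition~\ref{Prop:22} is stated here without proof, as a citation of \cite[Theorem~4.8]{HMS11}, and your weighted distance $d(x,y)^{1-\delta}(1+\beta V(x)+\beta V(y))^{\delta}$ together with the two-case split (both points in the level set $\{V\le 4K/\gamma\}$ versus $V(x)+V(y)>4K/\gamma$) is precisely the weak Harris contraction argument of that theorem, adapted to continuous time and to the $\delta$-dependent rate. Two cosmetic remarks: in the first case the comparison must be made with $d(x,y)^{1-\delta}\le \wt d_{\beta,\delta}(x,y)$ (which you do have, since the weight is $\ge 1$) rather than the weaker bound $d\le \wt d_{\beta,\delta}$ you invoke at that point, and the lower semicontinuity of $V$ that you flag is in fact part of the hypotheses of the cited theorem, so assuming it is legitimate.
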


Thus, the difference between the assumptions in the exponential and the subexponential cases is the following. In the exponential case it is enough to show that for any $M>0$ there exists $t>0$ such that the level set $\{V\le M\}$ satisfies inequality \eqref{dsmall}. On the other hand,  in the subexponential case one cannot pick $t$ depending on $M$; one has to check inequality \eqref{dsmall} for some fixed time $t$ simultaneously for \textit{all} $M$. We will see later in Section~\ref{S:MR} that this difference will become crucial and we will present different sets of assumptions for the subexponential and exponential cases.

A convenient tool to verify  \eqref{nonexpanding} and \eqref{dsmall} is provided by the following pair of notions.
\begin{Definition}[{\cite[Definition~4.6]{HMS11}}]\label{D:21}
A distance-like function $d$ bounded by $1$ is called \textit{contracting}
for $P_t$ if there exists  $\alpha<1$ such that for any $x,y\in E$ with $d(x,y)<1$
we have
\begin{equation*}
W_d(P_t(x,\cdot),P_t(y,\cdot))\le \alpha d(x,y).
\end{equation*}
\end{Definition}

\begin{Definition}[{\cite[Definition~4.4]{HMS11}}]\label{D:22}
A set $B\subset E$ is called \textit{$d$--small} for $P_t$ if for some $\eps>0$
\begin{equation*}
\sup_{x,y\in B}W_d(P_t(x,\cdot),P_t(y,\cdot))\le 1-\eps.
\end{equation*}
\end{Definition}


Clearly, if $d$ is contracting for $P_t$ and the set  $\{V\le M\}$ is $d$--small for $P_t$, then inequalities  \eqref{nonexpanding} and \eqref{dsmall} are satisfied.

Let us briefly mention a closely related concept of a \textit{coarse Ricci curvature} \cite[Definition~3]{Oll},   introduced  recently in  somewhat different framework, mainly focused at functional inequalities. In the terms of \cite{Oll},  conditions 3 and 4 of Proposition~\ref{Prop:21} can be reformulated as follows: the Ricci curvature of the whole space should be non-negative and the Ricci curvature of any level set of $V$ should be positive.




\subsection{Main results}\label{S:MR}

Propositions~\ref{Prop:21} and \ref{Prop:22} allow one to bound the rate of convergence of a Markov process to its invariant measure in the Wasserstein metric. The crucial question within this context is how to choose the distance-like function $d$ in a way that conditions 3 and 4 (or 4*) are satisfied. The main goal of this section is to provide an algorithm of such construction, which relies on explicit and easy-to-verify set of assumptions.

We will call a function $\theta:E\times E\to\R_+$ a \textit{premetric} if it is lower semicontinuous and $\theta(x,y)=0\Leftrightarrow x=y$; we fix a premetric $\theta$ on $E$ till the end of this section. Our first assumption will serve as a replacement for the contractivity condition.

\begin{Assumption}{A}\label{A:1} There exist a  non-increasing function
$r:\R_+\to\R_+$ with $\lim_{t\to\infty} r(t)=0$ and a {\blue locally bounded} function $L:\R_+\to\R_+$ such that for any $x,y\in E$, there exist random processes $X^{x,y}=(X^{x,y}_t)_{t\ge0}$, $Y^{x,y}=(Y^{x,y}_t)_{t\ge0}$ with the following properties:
\begin{enumerate}
   \item[{\rm1.}] $\mathrm{Law}\, (X^{x,y})=\P_x,$ and
     \begin{equation*}
     d_{TV}(\Law(Y^{x,y}_t), P_t(y,\cdot))\leq L(t)\theta(x,y),\quad t\ge0;
     \end{equation*}
   \item[{\rm2.}]$\E\theta(X^{x,y}_t, Y^{x,y}_t)\leq r(t) \theta(x,y)$, $t\ge0$.
\end{enumerate}
\end{Assumption}

Assumption \textbf{\ref{A:1}}.1 means that the pair  $(X^{x,y}, Y^{x,y})$ has the sense of a generalized coupling: its first component is distributed as $\P_x$ and the defect between the law of the second component at time $t$ and the ``true law'' $P_t(y,\cdot)$ can be effectively controlled. The function $L$ here plays the role of a Lipschitz coefficient. We do not suppose that $L$ is decreasing to $0$ at infinity; moreover we allow the situation when $L(t)\to\infty$ as $t\to\infty$, that is, the defect between the generalized coupling and the ``true law'' might be quite large.  Assumption {\bf\ref{A:1}}.2 controls the decay rate of the premetric $\theta$ between the components of this generalized coupling. Note also that we are not imposing any additional conditions on the relation between $\P_y$ and $\Law(Y^{x,y})$; in particular we are not assuming any absolute continuity.

The second assumption will replace the $d$-small property.
\begin{Assumption}{B1}\label{A:22}
There exist a set $B\subset E$ and $t_0>0$ such that for any $\eps>0$ there exists a set  $D\in\mathcal{E}$ such that
 \begin{enumerate}
   \item[{\rm1.}] $\inf\limits_{x\in B}P_{t_0}(x,D)>0$;
\item[{\rm2.}] $\sup\limits_{x,y \in D}\theta(x,y)\le \eps$.
 \end{enumerate}
\end{Assumption}
We note that a similar in spirit assumption can be found in the literature, see, e.g., \cite[Corollary 2.2]{HM11}.
Assumption \textbf{\ref{A:22}} is not of  a ``generalized-coupling-type''; it is essentially a support condition. However in the exponential case due to the reasons discussed after Proposition~\ref{Prop:22}, it can be replaced by  a ``generalized-coupling-type'' assumption that may be easier to check in some setups.

\begin{Assumption}{B2}\label{A:2} There exist a set $B\subset E$, a  non-increasing function $R:\R_+\to\R_+$ with $\lim_{t\to\infty} R(t)=0$, and $\eps>0$ such that for any $x,y\in B$, there exist random processes  $X^{x,y}=(X^{x,y}_t)_{t\ge0}$, $Y^{x,y}=(Y^{x,y}_t)_{t\ge0}$ with the following properties:
 \begin{enumerate}
   \item[{\rm1.}] $\mathrm{Law}\, (X^{x,y})=\P_x,$ and
\begin{equation}\label{TV_B}
d_{TV}(\Law(Y^{x,y}_t), P_t(y,\cdot))\leq 1-\eps,\quad t\ge0,\,\, x,y\in B;
\end{equation}
\item[{\rm2.}]$\E\theta(X^{x,y}_t, Y^{x,y}_t)\leq R(t)$, $t\ge0$, $x,y\in B$.
 \end{enumerate}
\end{Assumption}

Assumption \textbf{\ref{A:2}} means that there exists a ``good'' set $B\subset E$, where we have a family of generalized couplings with especially nice properties. Namely, the first component of the generalized coupling coincides with the ``true law'', and the distance between the second component at time $t$ and the corresponding ``true law'' at time $t$  is uniformly bounded on~$B$.

Now  for any fixed $N>0$ we consider the following distance-like function:
\begin{equation}\label{dn}
d_{N}(x,y):=N\theta(x,y)\wedge N\theta(y,x)\wedge 1,\quad x,y\in E.
\end{equation}
Our first main result shows how the $d$-smallness and contractivity conditions can be verified under the assumptions \textbf{\ref{A:1}}, \textbf{\ref{A:22}}, and \textbf{\ref{A:2}}.

\begin{Theorem}\label{L:contr}
\begin{enumerate}
\item[\rm{(i)}] Assume that {\rm\textbf{\ref{A:1}}} holds for some functions $r$, $L$. Then for any $N>0$, $t>0$ such that $r(t)\le 1/3$ and $N\ge 2 L(t)$
the distance--like function $d_N$ is contracting for $P_t$ {\blue (in the sense of Definition~\ref{D:21})}.
\item[\rm{(ii)}] Assume that  {\rm\textbf{\ref{A:22}}} holds for some $t_0>0$ and a set $B\subset E$. Then for any $N>0$  the set $B$ is $d_N$--small for $P_{t_0}$ {\blue (in the sense of Definition~\ref{D:22})}.
\item[\rm{(iii)}] Assume that {\rm\textbf{\ref{A:2}}} holds for some function $R$, a set $B\subset E$ and $\eps>0$. Then for any $N>0$, $t>0$ such that $N R(t)\le \eps/2$ the set $B$ is $d_N$--small for $P_t$.
\end{enumerate}
\end{Theorem}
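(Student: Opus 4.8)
The plan is to verify each of the three items directly from the definitions of "contracting" (Definition~\ref{D:21}) and "$d$--small" (Definition~\ref{D:22}), using in each case the corresponding generalized coupling $(X^{x,y},Y^{x,y})$ as a device to build an admissible coupling of $P_t(x,\cdot)$ and $P_t(y,\cdot)$, and estimating the $W_{d_N}$--distance along that coupling. The elementary inequality I expect to lean on repeatedly is that $d_N(u,v)\le N\theta(u,v)\wedge 1 \le N\theta(u,v)$ and that $d_N\le 1$; so bounding $\E\, d_N(X^{x,y}_t,Y^{x,y}_t)$ from above reduces either to bounding $N\,\E\theta(X^{x,y}_t,Y^{x,y}_t)$ (when we want a small constant times $\theta$) or to bounding the probability that $X^{x,y}_t\ne Y^{x,y}_t$-type events plus a total-variation correction term coming from the fact that $\mathrm{Law}(Y^{x,y}_t)$ is only close to, not equal to, $P_t(y,\cdot)$.

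For item (i), fix $x,y$ with $d_N(x,y)<1$, which forces $N\theta(x,y)<1$ or $N\theta(y,x)<1$; by symmetry of $d_N$ assume the former, so $\theta(x,y)<1/N$. Take $X^{x,y},Y^{x,y}$ from Assumption~\textbf{\ref{A:1}}. The first component has law $\P_x$, hence $\mathrm{Law}(X^{x,y}_t)=P_t(x,\cdot)$; the second has law within $d_{TV}$-distance $L(t)\theta(x,y)$ of $P_t(y,\cdot)$. Using the coupling characterization of total variation, I would enlarge the pair to a triple $(X^{x,y}_t, Y^{x,y}_t, \widetilde Y_t)$ where $\widetilde Y_t$ has law exactly $P_t(y,\cdot)$ and $\P(\widetilde Y_t\ne Y^{x,y}_t)\le d_{TV}(\mathrm{Law}(Y^{x,y}_t),P_t(y,\cdot))\le L(t)\theta(x,y)$; then $(X^{x,y}_t,\widetilde Y_t)$ is an admissible coupling for $W_{d_N}(P_t(x,\cdot),P_t(y,\cdot))$, and
\begin{equation*}
W_{d_N}(P_t(x,\cdot),P_t(y,\cdot))\le \E\,d_N(X^{x,y}_t,\widetilde Y_t)\le \E\,d_N(X^{x,y}_t, Y^{x,y}_t)+\P(\widetilde Y_t\ne Y^{x,y}_t).
\end{equation*}
The first term is at most $N\,\E\theta(X^{x,y}_t,Y^{x,y}_t)\le N r(t)\theta(x,y)$ by Assumption~\textbf{\ref{A:1}}.2; the second is at most $L(t)\theta(x,y)$. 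Since $N\theta(x,y)<1$ we have $\theta(x,y)=d_N(x,y)/N$ on this event, hence the bound becomes $\bigl(r(t)+L(t)/N\bigr)d_N(x,y)$. With $r(t)\le 1/3$ and $N\ge 2L(t)$ this is $\le (1/3+1/2)d_N(x,y)$ — wait, I need to be a bit more careful: I will instead choose the thresholds so that $r(t)+L(t)/N\le \alpha<1$; with $r(t)\le 1/3$ and $L(t)/N\le 1/2$ this gives $\alpha\le 5/6<1$, which is exactly what Definition~\ref{D:21} requires. (The constants $1/3$ and $2$ in the statement are tuned to give such an $\alpha$; the precise value of $\alpha$ is irrelevant.) One small point to handle: $\mathrm{Law}(Y^{x,y}_t)$ and $P_t(y,\cdot)$ live on the path space vs.\ the state space — but Assumption~\textbf{\ref{A:1}}.1 is stated at the level of the time-$t$ marginal, so this is immediate.

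For item (ii), this is the purely "support-type" argument and is the cleanest of the three. Given the set $B$ and $t_0$ from Assumption~\textbf{\ref{A:22}}, pick any $\eps_0\in(0,1)$ and apply the assumption with $\eps := 1/N$ to obtain $D\in\mathcal{E}$ with $p:=\inf_{x\in B}P_{t_0}(x,D)>0$ and $\sup_{x,y\in D}\theta(x,y)\le 1/N$ (hence $\sup_{x,y\in D}d_N(x,y)\le N\cdot(1/N)\wedge 1 = 1$ — actually I want it $<1$, so apply with $\eps:=1/(2N)$ to get $d_N\le 1/2$ on $D\times D$). For $x,y\in B$, couple $P_{t_0}(x,\cdot)$ and $P_{t_0}(y,\cdot)$ by putting as much mass as possible on $D\times D$: concretely, on an event of probability at least $p$ (using independence of the two coordinates off the diagonal, or just the standard construction) we can arrange $X_{t_0}\in D$ and $Y_{t_0}\in D$ simultaneously with probability $\ge$ something like $p^2$ — more simply, take the coupling $\lambda = $ (restrict $P_{t_0}(x,\cdot)$ to $D$) $\otimes$ (restrict $P_{t_0}(y,\cdot)$ to $D$) rescaled, plus an arbitrary coupling of the remaining mass; the mass on $D\times D$ is at least $p\cdot p$, on which $d_N\le 1/2$, and on the rest $d_N\le 1$. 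Hence $W_{d_N}(P_{t_0}(x,\cdot),P_{t_0}(y,\cdot))\le p^2\cdot(1/2) + (1-p^2)\cdot 1 = 1 - p^2/2 =: 1-\eps' < 1$, uniformly over $x,y\in B$, which is precisely $d_N$--smallness of $B$ for $P_{t_0}$.

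For item (iii), combine the ideas of (i) and (ii). Fix $t$ with $N R(t)\le \eps/2$ and $x,y\in B$; take $X^{x,y},Y^{x,y}$ from Assumption~\textbf{\ref{A:2}}. As in (i), introduce $\widetilde Y_t$ with law exactly $P_t(y,\cdot)$ and $\P(\widetilde Y_t\ne Y^{x,y}_t)\le d_{TV}(\mathrm{Law}(Y^{x,y}_t),P_t(y,\cdot))\le 1-\eps$ by \eqref{TV_B}. Then $(X^{x,y}_t,\widetilde Y_t)$ is admissible and
\begin{equation*}
W_{d_N}(P_t(x,\cdot),P_t(y,\cdot))\le \E\,d_N(X^{x,y}_t,\widetilde Y_t) \le \E\bigl[d_N(X^{x,y}_t,Y^{x,y}_t)\,\mathbbm{1}(\widetilde Y_t = Y^{x,y}_t)\bigr] + \P(\widetilde Y_t\ne Y^{x,y}_t).
\end{equation*}
The first term is $\le \E\, d_N(X^{x,y}_t,Y^{x,y}_t)\le N\,\E\theta(X^{x,y}_t,Y^{x,y}_t)\le N R(t)\le \eps/2$ by Assumption~\textbf{\ref{A:2}}.2; the second is $\le 1-\eps$. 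So $W_{d_N}(P_t(x,\cdot),P_t(y,\cdot))\le 1-\eps+\eps/2 = 1-\eps/2$, uniformly over $x,y\in B$ — i.e.\ $B$ is $d_N$--small for $P_t$ with parameter $\eps/2$.

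The only genuinely delicate point across all three parts is the bookkeeping around the total-variation defect: we must pass from "$\mathrm{Law}(Y^{x,y}_t)$ is close to $P_t(y,\cdot)$" to "there is an honest coupling of $P_t(x,\cdot)$ and $P_t(y,\cdot)$", and we must do so while keeping the $W_{d_N}$ estimate, which involves a function bounded by $1$, so the defect enters additively rather than multiplicatively. In parts (i) and (iii) this is harmless because the defect bound ($L(t)\theta(x,y)$, resp.\ $1-\eps$) is exactly of the right order to be absorbed; the constants in the statement ($r(t)\le 1/3$, $N\ge 2L(t)$; and $NR(t)\le\eps/2$) are chosen precisely so that defect plus main term stays strictly below $1$ (resp.\ below $d_N(x,y)$). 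There is no analytic obstacle here — it is entirely a matter of arranging the coupling construction and the arithmetic cleanly; I would present the "gluing in a third random variable with the exact target law" step once, in a short lemma or inline remark, and then reuse it in (i) and (iii).
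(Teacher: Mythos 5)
Your proposal is correct and follows essentially the same route as the paper's proof: in (i) and (iii) you build the honest coupling by gluing in a third random variable $Z$ with law exactly $P_t(y,\cdot)$ and absorb the total-variation defect additively (using $d_N\le 1$ and $d_N\le N\theta$), and in (ii) you use the independent coupling restricted to the small-$\theta$ set $D$ with $\eps=1/(2N)$, yielding the same bounds ($\tfrac56 d_N(x,y)$, $1-\delta^2/2$, and $1-\eps/2$ respectively). The only cosmetic difference is that the paper fixes the word-order ambiguity in (i) by assuming w.l.o.g.\ $\theta(x,y)\le\theta(y,x)$, which is what your ``by symmetry of $d_N$'' relabelling amounts to.
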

\begin{proof}
(i). Fix $t>0$, $N>0$ that satisfy the assumptions of the theorem. Take any $x,y\in E$ such that $d_N(x,y)<1$. Without loss of generality, suppose that $\theta(x,y)\le \theta(y,x)$.

We begin by observing that by Assumption {\bf\ref{A:1}}.1 and the coupling lemma (see, e.g., \cite[Theorem~4.1]{Villani}),
there exists a pair of random variables $\wh Y,Z$ such that
$\Law(\wh Y)=\Law(Y^{x,y}_t)$, $\Law(Z)=P_t(y,\cdot)$ and
\begin{equation*}
\P(\wh Y\neq Z)=d_{TV}(\Law(Y^{x,y}_t), P_t(y,\cdot))\le L(t) \theta(x,y).
\end{equation*}

By the gluing lemma (see, e.g., \cite[Lemma~4.3.2]{K17} and \cite[p.~23]{Villani}), we can assume that $\wh Y, Z$ are defined on the same probability space with $X^{x,y}_t, Y^{x,y}_t$, and $\wh Y=Y^{x,y}_t$. Then the joint law of $X^{x,y}_t, Z$ is a  coupling for {\blue $P_t(x,\cdot)$, $P_t(y,\cdot)$}, and thus
\begin{align*}
W_{d_N}(P_t(x, \cdot), P_t(y, \cdot))\le&  \E {d_N}(X_t^{x,y}, Z)\\
=&  \E {d_N}(X_t^{x,y}, Z)\I(Y^{x,y}_t=  Z)+\E {d_N}(X_t^{x,y}, Z)\I(Y^{x,y}_t\not=  Z)\\
\le& \E  {d_N}(X_t^{x,y}, Y_t^{x,y})+\P( Y^{x,y}_t\not=  Z)\nn\\
\le& N\E  \theta(X_t^{x,y}, Y_t^{x,y})+ L(t) \theta(x,y)\nn\\
\le& N\theta(x,y)(r(t)+L(t)N^{-1})\\
\le& \frac56 N\theta(x,y)=\frac56 d_N(x,y),\nn
\end{align*}
where the last equality follows from the assumptions $d_N(x,y)<1$ and $\theta(x,y)\le \theta(y,x)$. This proves the contraction property.

\smallskip

(ii). Fix $N>0$. In this case we show  the $d_N$-smallness of $P_{t_0}$ using the independent coupling, similarly to \cite[Proposition~5.3]{HMS11}. Take $\eps:=1/(2N)$. By Assumption~{\textbf{\ref{A:22}}} there exists a set $D\in\mathcal{E}$ such that $\delta:=\inf_{x\in B}P_{\blue t_0}(x,D)>0$ and $\sup_{x,y \in D}\theta(x,y)\le \eps$.

Now  we fix $x,y\in B$ and construct independent random variables $X$, $Y$ such that
\begin{equation*}
\Law(X)=P_{t_0}(x,\cdot);\,\, \Law(Y)=P_{t_0}(y,\cdot).
\end{equation*}
Then
\begin{align*}
W_{d_N}(P_{t_0}(x, \cdot), P_{t_0}(y, \cdot))\le&  \E {d_N}(X, Y)\le \P(\{X\notin D\} \cup \{Y\notin D\})+N\eps
\P(X\in D, Y\in D)\\
\le& 1- \delta^2 (1-N\eps)\le 1-\frac 12 \delta^2,
\end{align*}
 showing that $B$ is $d_N$-small.

(iii). We establish  the $d_N$-smallness of $P_t$ by an argument, similar to the one used in the proof of Part (i) of the lemma. We fix $t>0$, $N>0$, $x,y\in B$. Using the same notation and the same construction as in the first part of the proof,
we obtain a triple of random variables  $X_t^{x,y}, Y_t^{x,y}, Z$ such that
\begin{align*}
&\Law(X_t^{x,y})=P_t(x,\cdot);\quad \Law(Z)=P_t(y,\cdot);\\
&\E\theta(X^{x,y}_t, Y^{x,y}_t)\leq R(t);\quad \P\Big(Y^{x,y}_t \neq Z \Big)\le 1-\eps.
\end{align*}
Hence, using the fact that $d_N\le N\theta$, we deduce
\begin{align*}
W_{d_N}(P_t(x, \cdot), P_t(y, \cdot))\le&  \E {d_N}(X_t^{x,y}, Z)\le \E  {d_N}(X_t^{x,y}, Y_t^{x,y})+
\P( Y^{x,y}_t\not=  Z)\\
\le& N R(t)+1-\eps\\
\le& \eps/2+1-\eps=1-\eps/2,
\end{align*}
where the fourth inequality follows from the fact that $NR(t)\le \eps/2$. This proves the $d_N$-smallness of $B$.
\end{proof}
\begin{Remark}
 We
  will see in Sections \ref{S:SFDE} and  \ref{S:SPDE} that in various cases  generalized couplings with the  required properties can be constructed using a stochastic control approach. Then our  entire argument will  consist of two principal steps:
\begin{itemize}
  \item[(i)] using a  stochastic control technique, we construct a generalized coupling $X^{x,y},Y^{x,y}$, which exhibits certain contraction properties (Assumptions {\bf\ref{A:1}}.2, {\bf\ref{A:2}}.2)
  \item[(ii)] using the error-in-law bounds for this generalized coupling (Assumptions {\bf\ref{A:1}}.1, {\bf\ref{A:2}}.1), we construct then a true coupling $X^{x,y}_t,Z$ for $P_t(x, \cdot), P_t(y, \cdot)$, which  confirms the required contraction property of $d_N$.
\end{itemize}

We will call this type of argument a \emph{Control-and-Reimburse} strategy. The same general idea --- to apply the stochastic control in order to improve the system, and then to
take into account the impact of the control --- was  actually used, in more implicit  form,  in the previously mentioned construction \cite[Section~5.2]{HMS11} of the distance-like function $d(x,y)$ for an SFDE, and in the approach to the study of weak ergodicity of SPDEs developed in \cite{H02}. A similar argument was used to prove ergodicity in total variation in \cite{AVer}    for degenerate diffusions, and in  \cite{BodKul08} for solutions to L\'evy driven SDEs. Related ideas were used to establish the Harnack inequality for SDEs and SFDEs \cite{Wang, ERS09}.

\end{Remark}

Our next result establishes unique ergodicity under Assumptions {\bf\ref{A:1}} and {\bf\ref{A:22}}. It is a combination of Theorem~\ref{L:contr} and Proposition~\ref{Prop:21}. {\blue Recall the definition of $H_\phi$ in \eqref{Hphi}}.

\begin{Theorem}\label{T:Lyap} {\blue Assume that the Markov semigroup $P$ is Feller. Suppose that
\begin{enumerate}[{\rm1.}]
\item There exists a measurable function $V$ that satisfies condition~1 of Proposition~\ref{Prop:21}.
\item $\rho\wedge1\le \theta$, where $\rho$ is the original metric on E.
\item Assumption {\bf\ref{A:1}}  holds for  functions $r$, $L$.
\item There exists $t_0>0$ such that for any $M>0$ Assumption {\bf\ref{A:22}} holds for $B=\{V\le M\}$ and $t_0$.
\end{enumerate}
}

Then the Markov semigroup $(P_t)$ has a unique invariant measure $\pi$. Moreover, for any $\delta\in(0,1)$ there exist constants $C_1,C_2>0$ such that for any $x\in E$,
\begin{equation}\label{finalres}
W_{\rho\wedge 1} (P_t(x,\cdot),\pi)\le \frac{C_1 (1+\phi(V(x))^\delta)}{\phi(H_\phi^{-1}(C_2 t))^{\delta}},\quad t\ge0.
\end{equation}
\end{Theorem}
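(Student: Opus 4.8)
The plan is to deduce Theorem~\ref{T:Lyap} from Proposition~\ref{Prop:21} by choosing the right distance-like function $d$ and verifying all four of its hypotheses. The natural candidate is $d = d_N$ from \eqref{dn} for a suitably large fixed $N$, so the first step is to fix the parameters: by Assumption~\textbf{\ref{A:1}} choose $t>0$ with $r(t)\le 1/3$, and set $N := 2L(t)$ (note $L$ is locally bounded, so this is finite). With this choice, Theorem~\ref{L:contr}(i) tells us $d_N$ is contracting for $P_t$ in the sense of Definition~\ref{D:21}.

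Next I would check the four conditions of Proposition~\ref{Prop:21} for this $d_N$. Condition~1 (the Lyapunov condition) is hypothesis~1 of Theorem~\ref{T:Lyap}, so there is nothing to do. Condition~2 requires $\rho \le d$; here there is a subtlety because $d_N \le 1$ is bounded while $\rho$ need not be, so one cannot literally have $\rho \le d_N$. The fix is to run Proposition~\ref{Prop:21} with the truncated metric $\rho \wedge 1$ in place of $\rho$ — this is still a metric generating the same topology, hence still makes $(E,\rho\wedge1)$ Polish, and hypothesis~2 of Theorem~\ref{T:Lyap} gives $\rho\wedge 1 \le \theta$. Since $d_N(x,y) = N\theta(x,y)\wedge N\theta(y,x)\wedge 1 \ge \theta(x,y)\wedge\theta(y,x)\wedge 1 = (\rho\wedge1)(x,y)$ once $N\ge 1$ (which we should also ensure, enlarging $N$ and correspondingly $t$ if necessary), condition~2 holds. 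For condition~3 (non-expansion on $[t_1,t_2]$), I would use the semigroup property: contractivity of $d_N$ for $P_t$ gives $W_{d_N}(P_{kt}(x,\cdot),P_{kt}(y,\cdot)) \le d_N(x,y)$ for all integer multiples $kt$, and more care with the contraction argument (or a monotonicity/continuity argument in $t$) is needed to cover a whole interval — alternatively one observes that Proposition~\ref{Prop:21} only needs \eqref{nonexpanding} on some interval, and contractivity in particular yields the non-strict inequality \eqref{nonexpanding} at every time of the form $kt$; filling the gaps between consecutive such times is where one must be slightly careful, perhaps by noting that Assumption~\textbf{\ref{A:1}}.1--2 hold for \emph{all} $t\ge0$ and re-running the argument of Theorem~\ref{L:contr}(i) at arbitrary times where $r(t)\le 1/3$, which holds for all large $t$ since $r\to0$.

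Condition~4 is where hypothesis~4 of Theorem~\ref{T:Lyap} enters: fix $M>0$, apply Assumption~\textbf{\ref{A:22}} with $B=\{V\le M\}$ and the given $t_0$, and invoke Theorem~\ref{L:contr}(ii) to conclude that $\{V\le M\}$ is $d_N$-small for $P_{t_0}$, i.e. $\sup_{x,y\in\{V\le M\}} W_{d_N}(P_{t_0}(x,\cdot),P_{t_0}(y,\cdot)) \le 1-\eps$ for some $\eps=\eps(M)>0$. Since $d_N\le1$, this immediately yields \eqref{dsmall}: $W_{d_N}(P_{t_0}(x,\cdot),P_{t_0}(y,\cdot)) \le 1-\eps \le (1-\eps)\cdot 1$ whenever $d_N(x,y)$ could be as large as $1$, and for smaller $d_N(x,y)$ contractivity does even better — so condition~4 holds at time $t_0$ (which may depend on $M$, exactly as permitted in the subexponential regime of Proposition~\ref{Prop:21}). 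Having verified all four conditions, Proposition~\ref{Prop:21} delivers a unique invariant measure $\pi$ and the bound $W_{d_N}(P_t(x,\cdot),\pi) \le C_1(1+\phi(V(x))^\delta)/\phi(H_\phi^{-1}(C_2 t))^\delta$. Finally, since $\rho\wedge 1 \le d_N$, we have $W_{\rho\wedge1} \le W_{d_N}$, which gives \eqref{finalres}.

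The main obstacle I anticipate is reconciling the two different time scales: the contractivity time $t$ from Assumption~\textbf{\ref{A:1}} and the $d$-smallness time $t_0$ from Assumption~\textbf{\ref{A:22}} need not coincide, yet Proposition~\ref{Prop:21} wants \eqref{nonexpanding} on an \emph{interval} $[t_1,t_2]$ and \eqref{dsmall} at \emph{some} time. The cleanest route is to note that Assumption~\textbf{\ref{A:1}} holds for all $t\ge0$ with $r(t)\to0$, so the contraction argument of Theorem~\ref{L:contr}(i) actually gives contractivity of the \emph{same} $d_N$ (with $N=2\sup_{s\in[t_0,2t_0]}L(s)$, say, which is finite by local boundedness of $L$) for every $P_s$, $s\in[t_0,2t_0]$, provided $t_0$ was chosen large enough that $r(s)\le1/3$ there; shrinking nothing and only possibly enlarging $t_0$, one then has both the interval non-expansion and the $d_N$-smallness at a common time, and a single $N$ works throughout. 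One should double-check that enlarging $t_0$ is compatible with hypothesis~4 of Theorem~\ref{T:Lyap} — it is, because Assumption~\textbf{\ref{A:22}} as stated fixes one $t_0$ that works for all $M$, but the $d$-smallness argument via the independent coupling in Theorem~\ref{L:contr}(ii) is robust and, combined with the Lyapunov-driven return estimates, any sufficiently large time will do; this bookkeeping is routine but is the one place demanding attention.
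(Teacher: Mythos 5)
Your overall strategy coincides with the paper's (apply Proposition~\ref{Prop:21} to $V$ and the distance $d_N$ of \eqref{dn}, using Theorem~\ref{L:contr} to supply conditions 3 and 4), but your verification of condition~4 has a genuine gap. At time $t_0$, the $d_N$-smallness of $\{V\le M\}$ obtained from Theorem~\ref{L:contr}(ii) only gives $W_{d_N}(P_{t_0}(x,\cdot),P_{t_0}(y,\cdot))\le 1-\eps$, which matches \eqref{dsmall} when $d_N(x,y)=1$ but not when $d_N(x,y)<1$, where the required bound is $(1-\eps)d_N(x,y)$. Your claim that ``for smaller $d_N(x,y)$ contractivity does even better'' presumes that $d_N$ is contracting for $P_{t_0}$ itself; this is unjustified, since $t_0$ comes from Assumption~\textbf{\ref{A:22}} and bears no relation to the contraction scale --- nothing guarantees $r(t_0)\le 1/3$, nor that your $N$ dominates $2L(t_0)$. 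Your proposed repair, enlarging $t_0$ until contraction and smallness hold at a common time, is not available as stated: hypothesis~4 of the theorem provides Assumption~\textbf{\ref{A:22}} only at the specific time $t_0$, and the accessibility bound $\inf_{x\in\{V\le M\}}P_{t_0}(x,D)>0$ does not automatically persist at larger times. The step you dismiss as ``routine bookkeeping'' via Lyapunov-driven return estimates is exactly the missing argument (it could be carried out, e.g.\ using $P_sV\le V+Ks$, the Markov inequality, and hypothesis~4 applied to the larger level set $\{V\le 2(M+Ks)\}$, but you neither formulate nor prove it). A further sign of trouble: your parenthetical that the time in condition~4 of Proposition~\ref{Prop:21} ``may depend on $M$, exactly as permitted in the subexponential regime'' is backwards --- as the paper emphasizes after Proposition~\ref{Prop:22}, it is precisely in the subexponential case that $t$ must be chosen uniformly in $M$, and only $\eps$ may depend on $M$.

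The paper resolves the time-scale mismatch without ever touching $t_0$: pick $t_*$ with $r(t_*)\le 1/3$, set $N:=\bigl(2\sup_{t\in[t_*,t_*+t_0]}L(t)\bigr)\vee 1$, so that $d_N$ is contracting for every $P_t$ with $t\in[t_*,t_*+t_0]$ (this gives condition~3 on an interval, as you also anticipated), and then verify \eqref{dsmall} at the single composite time $t=t_*+t_0$, uniform in $M$: if $d_N(x,y)<1$, apply the contraction of $P_{t_*+t_0}$ directly; if $d_N(x,y)=1$, write $P_{t_*+t_0}(x,\cdot)=\int P_{t_0}(x,dz)P_{t_*}(z,\cdot)$ and combine the $d_N$-smallness of $\{V\le M\}$ for $P_{t_0}$ with the non-expansion of $W_{d_N}$ under $P_{t_*}$ to get $W_{d_N}(P_{t_*+t_0}(x,\cdot),P_{t_*+t_0}(y,\cdot))\le W_{d_N}(P_{t_0}(x,\cdot),P_{t_0}(y,\cdot))\le 1-\eps$. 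This uses Assumption~\textbf{\ref{A:22}} only at the time at which it is assumed and keeps one $N$ throughout. To complete your write-up you must either adopt this composite-time argument or actually prove the transfer of Assumption~\textbf{\ref{A:22}} to the enlarged time; as it stands, condition~4 is not established.
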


\begin{proof}[Proof of Theorem~\ref{T:Lyap}]

Let us check that all the conditions of Proposition~\ref{Prop:21} are satisfied for the function $V$ and a distance--like function $d_N$ introduced in \eqref{dn}, where $N\ge1$ will be chosen later. The first condition of Proposition~\ref{Prop:21} is satisfied by the first  assumption of  the theorem. Since $\rho\wedge1\le \theta$, we have
 $ \rho\wedge 1\le  d_N$ and thus the second condition of Proposition~\ref{Prop:21} is satisfied.

Now we pick $t_*\ge{\blue 0}$ such that $r(t_*)\le1/3$. This is possible thanks to Assumption~{\bf\ref{A:1}}. Set {\blue$N:=(2\sup_{t\in[t_*,t_*+t_0]}L(t))\vee 1$.}
Theorem~\ref{L:contr}(i) implies that for some {\blue $\alpha<1$} we have
\begin{equation}\label{contralpha}
W_{d_N}(P_{t_*+t_0}(x,\cdot),P_{t_*+t_0}(y,\cdot))\le \alpha d_N(x,y)\quad\text{whenever {\blue $d_N(x,y)<1$}}
\end{equation}
and that the distance--like function $d_N$ is contracting for $P_t$ for any {\blue $t\in[t_*,t_*+t_0]$}. This immediately implies that inequality \eqref{nonexpanding} holds {\blue for any $t\in[t_*,t_*+t_0]$} and thus condition 3 of Proposition~\ref{Prop:21} is satisfied. {\blue In particular, we have
\begin{equation}\label{contralpha2}
W_{d_N}(P_{t_*}(x,\cdot),P_{t_*}(y,\cdot))\le d_N(x,y)\quad x,y\in E.
\end{equation}
}

To check condition 4 of Proposition~\ref{Prop:21} we take {\blue $t:=t_*+t_0$} and fix arbitrary $M>0$. It follows from Theorem~\ref{L:contr}(ii) and assumption~4 of the theorem that the set $\{V\le M\}$  is $d_N$--small for $P_{t_0}$ with some $\eps=\eps(t,M)$. Now take any $x,y\in \{V\le M\}$. If $d_N(x,y)<1$ then using \eqref{contralpha}, we get
\begin{equation*}
 W_{d_N}(P_{t}(x,\cdot),P_{t}(y,\cdot))\le  \alpha d_N (x,y).
\end{equation*}
If $d_N(x,y)=1$, then  using \eqref{contralpha2} and $d_N$--small property,  we derive
\begin{equation*}
 W_{d_N}(P_{t_0+t_*}(x,\cdot),P_{t_0+t_*}(y,\cdot))\le W_{d_N}(P_{t_0}(x,\cdot),P_{t_0}(y,\cdot))\le 1-\eps=(1-\eps) d_N (x,y).
\end{equation*}
Thus, condition 4 of Proposition~\ref{Prop:21} is met. Therefore all conditions of Proposition~\ref{Prop:21} are satisfied and hence the Markov semigroup $(P_t)$ has a unique invariant measure $\pi$ and \eqref{finalres} holds.
\end{proof}

 \begin{Theorem}\label{T:Lyap2}
 In the exponential case (that is, when condition~1 of Proposition~\ref{Prop:21} holds for a linear function $\phi(x)=\gamma x$, $\gamma>0$)
condition~4 of Theorem~\ref{T:Lyap} can  be replaced by the following assumption:
 \begin{itemize}
  \item[\rm4*.]  Assumption {\bf\ref{A:2}} holds for {\blue some function $R$}, the level set of the Lyapunov function
 $\{V\le 4K/\gamma\}$, {\blue and some $\eps>0$}.
 \end{itemize}
 \end{Theorem}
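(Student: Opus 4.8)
The plan is to adapt the proof of Theorem~\ref{T:Lyap} with two substitutions: invoke Proposition~\ref{Prop:22} in place of Proposition~\ref{Prop:21}, and Theorem~\ref{L:contr}(iii) in place of Theorem~\ref{L:contr}(ii) to produce the $d_N$-small set. So I would again verify, for the Lyapunov function $V$ and the distance-like function $d_N$ of \eqref{dn} with a suitable $N\ge1$, the hypotheses of Proposition~\ref{Prop:21} with condition~4 replaced by condition~4* of Proposition~\ref{Prop:22}. Conditions~1 and~2 are handled exactly as in the proof of Theorem~\ref{T:Lyap}: condition~1 is hypothesis~1 (now with the linear $\phi(x)=\gamma x$), and $\rho\wedge 1\le\theta$ gives $\rho\wedge1\le d_N$, hence condition~2.

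For the choice of $N$, which is the crux, I would fix (using Assumption~{\bf\ref{A:1}}) a time $t_*>0$ with $r(t_*)\le1/3$ and set $N:=\bigl(2\sup_{u\in[t_*,2t_*]}L(u)\bigr)\vee1$; this is finite because $L$ is locally bounded. Since $r$ is non-increasing, Theorem~\ref{L:contr}(i) gives that $d_N$ is contracting for $P_u$, with constant $5/6$, for every $u\in[t_*,2t_*]$; in particular $W_{d_N}(P_u(x,\cdot),P_u(y,\cdot))\le d_N(x,y)$ for all $x,y\in E$ and all $u\in[t_*,2t_*]$, which is condition~3 on $[t_1,t_2]=[t_*,2t_*]$. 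I would then record the crucial consequence that non-expansion propagates along the semigroup: since every $t\ge t_*$ equals $m\,(t/m)$ for some integer $m\ge1$ with $t/m\in[t_*,2t_*]$, iterating the non-expansion of $P_{t/m}$ yields $W_{d_N}(P_t(x,\cdot),P_t(y,\cdot))\le d_N(x,y)$ for all $x,y\in E$ and all $t\ge t_*$ --- even though, for this fixed $N$, Theorem~\ref{L:contr}(i) by itself gives contractivity of $d_N$ for $P_t$ only for those $t\ge t_*$ with $2L(t)\le N$.

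It remains to get condition~4* of Proposition~\ref{Prop:22}, i.e.\ inequality~\eqref{dsmall} at one time on $\{V\le 4K/\gamma\}$. Let $\eps_0>0$ be the constant in hypothesis~4*, so Assumption~{\bf\ref{A:2}} holds for $B:=\{V\le4K/\gamma\}$, the function $R$, and $\eps_0$. With $N$ now fixed and $R$ non-increasing with $R(t)\to0$, choose $T\ge t_*$ with $NR(T)\le\eps_0/2$; then Theorem~\ref{L:contr}(iii) gives $\sup_{x,y\in B}W_{d_N}(P_T(x,\cdot),P_T(y,\cdot))\le 1-\eps_0/2$. Put $\tau:=T+t_*$, fix $x,y\in B$, and split on $d_N(x,y)$. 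If $d_N(x,y)<1$, I would decompose the time-$\tau$ transition as time $t_*$ followed by time $T$ and combine the $5/6$-contractivity of $d_N$ for $P_{t_*}$ (applicable since $d_N(x,y)<1$) with the non-expansion of $d_N$ for $P_T$ to obtain $W_{d_N}(P_\tau(x,\cdot),P_\tau(y,\cdot))\le(5/6)\,d_N(x,y)$. If $d_N(x,y)=1$, I would instead decompose as time $T$ followed by time $t_*$ and combine the $d_N$-smallness of $B$ for $P_T$ with the non-expansion of $d_N$ for $P_{t_*}$ to obtain $W_{d_N}(P_\tau(x,\cdot),P_\tau(y,\cdot))\le1-\eps_0/2=(1-\eps_0/2)\,d_N(x,y)$. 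Thus \eqref{dsmall} holds at time $\tau$ with $\eps:=\min\{1/6,\eps_0/2\}$, which is condition~4* of Proposition~\ref{Prop:22}. Proposition~\ref{Prop:22} then yields a unique invariant measure $\pi$ and the bound~\eqref{finalres} for $W_{d_N}$, hence for $W_{\rho\wedge1}$ since $\rho\wedge1\le d_N$; with $\phi(x)=\gamma x$ this is the claimed exponential convergence.

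I expect the main obstacle to be precisely the circularity that the above circumvents: to have $d_N$ simultaneously contracting for $P_\tau$ and $B$ being $d_N$-small for $P_\tau$, Theorem~\ref{L:contr}(i) would require $N\ge 2L(\tau)$ while Theorem~\ref{L:contr}(iii) requires $\tau$ large enough that $NR(\tau)\le\eps_0/2$, and since $L$ may grow and $R$ may decay arbitrarily slowly, no single $\tau$ need satisfy both. Decoupling the contraction time $t_*$ (chosen in the fixed compact $[t_*,2t_*]$, where the finite number $\sup_{[t_*,2t_*]}L$ pins down $N$) from the smallness time $T$ (chosen large, where only non-expansion is needed, and that propagates for free) is the one genuinely new point compared with the proof of Theorem~\ref{T:Lyap}; everything else is the same bookkeeping.
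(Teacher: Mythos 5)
Your proposal is correct and follows essentially the same route as the paper's proof: the same choice $N=(2\sup_{[t_*,2t_*]}L)\vee1$, the same propagation of non-expansion from $[t_*,2t_*]$ to all $t\ge t_*$ by iterating the semigroup, the same choice of a later time with $NR\le\eps/2$ to get $d_N$-smallness of $\{V\le 4K/\gamma\}$ via Theorem~\ref{L:contr}(iii), and the same two-case splitting (contract at $t_*$ then non-expand, versus smallness then non-expand) to verify condition~4* of Proposition~\ref{Prop:22} at the combined time. The ``decoupling'' of the contraction time from the smallness time that you single out is precisely the device used in the paper.
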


 \begin{proof}[Proof of Theorem~\ref{T:Lyap2}]
 {\blue
 We use essentially the same line of argument as in the proof of Theorem~\ref{T:Lyap} with some modifications.
 Pick $t_*\ge{\blue 0}$ such that $r(t_*)\le1/3$ and set $N:=(2\sup_{t\in[t_*,2t_*]}L(t))\vee 1$. 
 We see again that conditions 1--3 of Proposition~\ref{Prop:21} hold for $V$ and $d_N$.   In particular we have for some  $\alpha<1$
\begin{equation}\label{contralpha22}
W_{d_N}(P_{t_*}(x,\cdot),P_{t_*}(y,\cdot))\le \alpha d_N(x,y)\quad\text{whenever $d_N(x,y)<1$.}
\end{equation} 
Furthermore, it follows from Theorem~\ref{L:contr}(i) that for any $t\in[t_*,2t_*]$ we have
 \begin{equation*}
W_{d_N}(P_{t}(x,\cdot),P_{t}(y,\cdot))\le d_N(x,y)\quad x,y\in E.
\end{equation*}
This implies that for any $t\ge t_*$ 
 \begin{equation}\label{masscontract}
W_{d_N}(P_{t}(x,\cdot),P_{t}(y,\cdot))\le d_N(x,y)\quad x,y\in E.
\end{equation}
Pick now $t_0\ge t_*$ such that $NR(t_0)\le\eps/2$. Then
condition 4* of the theorem,  the definition of $N$, and  Theorem~\ref{L:contr}(iii) imply now that the set $B:=\{V\le 4K/\gamma\}$ is $d_N$--small for $P_{t_0}$.

 Let us check now that condition 4* of Proposition~\ref{Prop:22} holds for $t=t_*+ t_0$. Take any $x,y\in \{V\le 4K/\gamma\}$. If $d_N(x,y)<1$ then taking into account \eqref{contralpha22}, \eqref{masscontract}, and the fact that $t_0\ge t_*$, we get
\begin{equation*}
 W_{d_N}(P_{t_0+t_*}(x,\cdot),P_{t_0+t_*}(y,\cdot))\le W_{d_N}(P_{t_*}(x,\cdot),P_{t_*}(y,\cdot))\le \alpha d_N (x,y).
\end{equation*}
If $d_N(x,y)=1$, then  using \eqref{masscontract} and $d_N$--small property,  we derive
\begin{equation*}
 W_{d_N}(P_{t_0+t_*}(x,\cdot),P_{t_0+t_*}(y,\cdot))\le W_{d_N}(P_{t_0}(x,\cdot),P_{t_0}(y,\cdot))\le 1-\eps=(1-\eps) d_N (x,y).
\end{equation*}
Thus, condition 4* of Proposition~\ref{Prop:22} is met.

Therefore all conditions of Proposition~\ref{Prop:22} are satisfied and thus the Markov semigroup $(P_t)$ has a unique invariant measure $\pi$ and \eqref{finalres} holds.
}
\end{proof}

\section{Easy application: SFDEs}\label{S:SFDE}

In this section we illustrate our method of generalized coupling by establishing a   rate of convergence to the invariant measure for solutions to SFDEs. We show how  Assumptions \textbf{\ref{A:1}} and \textbf{\ref{A:22}} can be immediately verified for these processes. This can be regarded as a ``warm--up'' before the next section, which deals with SPDEs. Related ideas will be applied there in a  more complicated setup and  additional challenges will arise.

Let us introduce the  model. Fix $n,m\in\N$ and $r>0$. Denote by $\mathcal{C}=\mathcal{C}([-r,0],\R^n)$ the space of continuous functions endowed with the supremum norm $\|\cdot\|$. For a matrix $M\in\R^{d\times m}$ we denote by $\norm{M}$ its Frobenius norm, that is, $\norm{M}:=\sqrt{\sum M_{ij}^2}$. For a real number $a$, let $a_+:=\max(a,0)$.

Consider the following SFDE:
\begin{align}\label{SFDE}
&d X^x(t)=f(X_t^x) dt +g (X_t^x) dW(t),\quad t\ge0\\
&X^x_0=x\nn,
\end{align}
where $f:\C\to\R^n$ and $g:\C\to\R^{n\times m}$ are measurable functions, $W$ is an $m$-dimensional Brownian motion, the initial condition $x\in\C$, and we use the standard notation $X_t(s):=X(t+s)$, $s\in[-r,0]$.

Suppose that the function $f$ is continuous and bounded on bounded subsets of~$\C$. Further, suppose that $f$ is one-sided Lipschitz and $g$ is Lipschitz, that is, there exists $C>0$ such that for any $x,y\in\C$
\begin{equation*}
\langle f(x)-f(y),x(0)-y(0)\rangle_++\norm{g(x)-g(y)}^2\le  C\|x-y\|^2.
\end{equation*}
It is known that under these assumptions equation \eqref{SFDE} has a unique strong solution~\cite{RS08}. Moreover this solution $X=(X_t)_{t\ge0}$ is a Feller Markov process with the state space $(\C,\mathcal{B}(\C))$. Denote by $(P_t)_{t\ge0}$ the corresponding semigroup.  Assume also  the uniform non-degeneracy condition:
\begin{equation}\label{nd}
\sup_{x\in\C} \norm{g^{-1}(x)}<\infty,
\end{equation}
where $g^{-1}(x)$ denotes a right inverse of the matrix $g(x)$.

The next theorem describes the convergence rate of  $\Law(X_t)$ to its invariant measure in the Wasserstein metric.

\begin{Theorem}\label{T:31} Assume that there exists a Lyapunov function $V\colon\C\to\R_+$ that satisfies condition~1 of Proposition~\ref{Prop:21}. Suppose that \textbf{either} (i) or (ii) holds.
\begin{itemize}
\item[\rm{(i)}]  $\lim_{\|x\|\to\infty} V(x)=+\infty$.

\item[\rm{(ii)}]
$\lim_{|x(0)|\to\infty} V(x)=+\infty$. Assume additionally that  the drift and diffusion of SFDE \eqref{SFDE} satisfy the following growth condition: there exists $C>0$ such that for any $x\in\C$
\begin{equation}\label{growth}
\langle f(x), x(0)\rangle_++\norm{g(x)}^2\le C +C|x(0)|^2.
\end{equation}
\end{itemize}
Then  SFDE \eqref{SFDE} has a unique invariant measure $\pi$. Further,  $\Law(X_t)$ converges to $\pi$ in the Wasserstein metric $W_{\|\cdot\|\wedge1}$ and the rate of convergence is given by \eqref{finalres}.
\end{Theorem}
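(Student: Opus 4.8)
The plan is to verify the hypotheses of Theorem~\ref{T:Lyap} for the SFDE \eqref{SFDE} with the premetric $\theta(x,y):=\|x-y\|\wedge 1$, and then invoke that theorem directly. Condition~1 (the Lyapunov condition) is assumed; condition~2 ($\rho\wedge1\le\theta$) holds trivially since here $\rho=\|\cdot\|$; and Fellerness is known from \cite{RS08}. So the real work is to verify Assumption~\textbf{\ref{A:1}} and (the level-set version of) Assumption~\textbf{\ref{A:22}}.

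For Assumption~\textbf{\ref{A:1}} I would use the \emph{Control-and-Reimburse} strategy described in the Remark: given $x,y\in\C$, let $X^{x,y}$ solve \eqref{SFDE} started from $x$ driven by $W$, and construct $Y^{x,y}$ as the solution started from $y$ driven by the \emph{controlled} Brownian motion $\widetilde W(t)=W(t)+\int_0^t h(s)\,ds$, where the control $h$ is chosen via Girsanov to steer $Y^{x,y}$ toward $X^{x,y}$. Concretely, set $h(s):=g^{-1}(Y^{x,y}_s)\bigl(f(X^{x,y}_s)-f(Y^{x,y}_s)-\lambda(X^{x,y}(s)-Y^{x,y}(s))\bigr)$ for a suitably large $\lambda>0$, which is where the uniform non-degeneracy \eqref{nd} is used to make $h$ well defined and controlled. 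With this choice the difference $Z(t):=X^{x,y}(t)-Y^{x,y}(t)$ satisfies an ODE whose drift, by the one-sided Lipschitz property of $f$, yields $\frac{d}{dt}|Z(t)|^2\le -2\lambda|Z(t)|^2$ on $t\ge0$ (and $|Z(t)|\le\|x-y\|$ for $t\in[-r,0]$), giving exponential decay $\E\theta(X^{x,y}_t,Y^{x,y}_t)\le e^{-\lambda(t-r)_+}\,\theta(x,y)=:r(t)\theta(x,y)$, which verifies \textbf{\ref{A:1}}.2. For \textbf{\ref{A:1}}.1, $\Law(X^{x,y})=\P_x$ by construction, while Pinsker's inequality together with the Girsanov density bounds the total-variation defect: $d_{TV}(\Law(Y^{x,y}_t),P_t(y,\cdot))^2\le \tfrac12\,\E\int_0^t|h(s)|^2\,ds$, and since $|h(s)|\le c\,|Z(s)|\le c\,e^{-\lambda(s-r)_+}\|x-y\|$ the integral is bounded by $C\|x-y\|^2$, giving $d_{TV}\le L(t)\theta(x,y)$ with $L$ locally bounded (indeed bounded). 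One technical point: $\theta$ is truncated at $1$, so when $\|x-y\|>1$ the bound $L(t)\theta(x,y)$ must still dominate a quantity that is automatically $\le 1$; this is handled by taking $L(t)\ge 1$.

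For the level-set smallness, Assumption~\textbf{\ref{A:22}} with $B=\{V\le M\}$: given $\eps>0$ I would take $D$ to be a small $\|\cdot\|$-ball (of radius $\eps$) around a fixed reference point, so that \textbf{\ref{A:22}}.2 is immediate, and then show $\inf_{x\in B}P_{t_0}(x,D)>0$ for suitable $t_0>0$. This is a controllability/support statement: starting from any $x$ with $V(x)\le M$ — which under hypothesis (i) forces $\|x\|$ bounded, and under hypothesis (ii) forces $|x(0)|$ bounded — one can, with positive probability, drive the solution into a small ball in time $t_0>r$; the non-degeneracy \eqref{nd} gives the needed noise directions, and the growth condition \eqref{growth} in case~(ii) controls the excursions of $|X(t)|$ so that the segment $X_{t_0}$ lands near $0$ with uniformly positive probability over the (compact, in the relevant variable) set $B$. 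I expect this support/controllability estimate — making the lower bound $\delta=\delta(M)>0$ genuinely uniform over the level set, especially in case~(ii) where only $x(0)$ (not the whole path $x$) is controlled — to be the main obstacle; the Control-and-Reimburse construction for \textbf{\ref{A:1}}, by contrast, is essentially a direct Girsanov-plus-Gronwall computation. Once both assumptions are verified, Theorem~\ref{T:Lyap} yields the unique invariant measure $\pi$ and the convergence rate \eqref{finalres} in $W_{\|\cdot\|\wedge 1}$, completing the proof.
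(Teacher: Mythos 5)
Your overall strategy (a Girsanov-based generalized coupling for Assumption \textbf{\ref{A:1}}, a support-type argument for \textbf{\ref{A:22}} on level sets, then the general ergodicity theorem) is the same as the paper's, which verifies these via Lemma~\ref{T:SFDE} and then applies Proposition~\ref{Prop:21}. However, the core coupling step as you wrote it would fail. Your control $h(s)=g^{-1}(Y_s)\bigl(f(X_s)-f(Y_s)-\lambda(X(s)-Y(s))\bigr)$ cancels the drift difference, but $f$ is only \emph{one-sided} Lipschitz: the standing assumption bounds $\langle f(x)-f(y),x(0)-y(0)\rangle_+$, not $|f(x)-f(y)|$, so there is no estimate $|f(X_s)-f(Y_s)|\le C\|X_s-Y_s\|$ and hence no justification for $|h(s)|\le c\,|Z(s)|$; the Pinsker/Girsanov bound on $d_{TV}$ then has no control. (There is also a sign slip: with your $h$ the difference satisfies $dZ=+\lambda Z\,dt+(g(X_t)-g(Y_t))\,dW$, which expands.) Moreover, your claimed pathwise bound $\tfrac{d}{dt}|Z(t)|^2\le-2\lambda|Z(t)|^2$ ignores both the stochastic term coming from $g(X_t)\neq g(Y_t)$ ($g$ is Lipschitz, not constant) and the delay structure: the Lipschitz bound produces the \emph{segment} norm $\|X_t-Y_t\|=\sup_{s\in[-r,0]}|Z(t+s)|$, so the decay is only in mean square and is a genuine delay-stability estimate. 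The paper avoids all of this by taking the purely linear feedback $\beta(t)=\lambda\,g^{-1}(Y_t)(X(t)-Y(t))$ (so that $|\beta|\le C\lambda|Z|$ automatically, using \eqref{nd}) and invoking \cite[Lemma~3.6]{HMS11} to get $\E\|X_t-Y_t\|^2\le Ce^{-2t}\|x-y\|^2$ for $\lambda\ge\lambda_0$; that lemma is exactly where the one-sided Lipschitz drift and Lipschitz diffusion are used, and it cannot be replaced by a naive Gronwall argument.

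Two further gaps. First, your premetric $\theta=\|x-y\|\wedge1$ does not verify \textbf{\ref{A:1}}.2: for $\|x-y\|>1$ one needs $\E[\|X_t-Y_t\|\wedge1]\le r(t)$ with a single $r(t)\to0$ uniformly in $x,y$, and the mean-square contraction only gives decay after a time of order $\log\|x-y\|$, which is unbounded; the paper takes the untruncated $\theta(x,y)=\|x-y\|$ (the truncation enters only through $d_N=N\theta\wedge1$), and condition~2 of Theorem~\ref{T:Lyap} still holds. Second, the level-set smallness in case (ii) --- which you correctly flag as the main obstacle --- is left unproved: the paper's argument there is concrete work (It\^o's formula for $|X(t)|^2$, the Burkholder--Davis--Gundy inequality combined with the growth condition \eqref{growth}, Gronwall and Fatou to get $\E\sup_{s\le r}|X^x(s)|^4\le C|x(0)|^4+C$, then Chebyshev and the support result \cite[Lemma~3.8]{HMS11} to get a uniformly positive probability of entering a small ball), and without it the verification of \textbf{\ref{A:22}} on $\{V\le M\}$ under hypothesis (ii) is missing.
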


This theorem is essentially not new; it is only a  mild generalization of \cite[Section~5]{HMS11} and  \cite[Theorem~3.2]{Bu14}. However,  using the generalized coupling method, we managed to drastically simplify  the  key ingredient of the proof, namely, the verification of the contraction property for the solution of an SFDE. Since this method is model insensitive, similar arguments allow us to establish contraction properties of solutions of SPDEs, see Section~\ref{S:SPDE}. On the other hand,  transferring the ideas used in \cite[Section~5]{HMS11}  from SFDEs to SPDEs or other Markov models seems to be rather difficult (we mention though a recent preprint \cite{China}, where SFDEs with infinite memory are analyzed in a way quite similar to \cite[Section~5]{HMS11}).

Construction of a suitable Lyapunov function $V$ that satisfies condition \eqref{Lyapf} for a specific SFDE is a completely independent task and is out of the scope of the paper; we refer here to \cite{Scheu84}, \cite[Remark~5.2]{HMS11}, \cite{BS}, \cite[Section~3.2]{Bu14}, \cite[Section~4.6.1]{K17} for possible methods of building $V$. Let us just briefly explain the difference between conditions (i) and (ii) of Theorem~\ref{T:31}. Condition (ii)  allows to consider a larger and more natural class of Lyapunov functions. For example the function $V(x):=|x(0)|^2$, $x\in\C$ satisfies condition (ii) but not condition (i) of the theorem. The price to pay is that the drift and the diffusion of the SFDE have to satisfy a certain growth condition.


The proof of Theorem~\ref{T:31} is based on the following key lemma.

\begin{Lemma}\label{T:SFDE}
\begin{itemize}
\item[\rm{(i)}] There exist $N_0>0$, $t_0>0$ such that for any $N\ge N_0$, $t\ge t_0$
the distance
$$
d_N(x,y):=N\|x-y\|\wedge1
$$
is contracting for $P_t$.

\item[\rm{(ii)}] For any $N>0$, $t\ge 2r$, $M>0$ the level set
$$
B_M:=\{x\in\C: \|x\|\le M\},
$$
is $d_N$-small for $P_t$.

\item[\rm{(iii)}] Assume additionally that the drift and diffusion satisfy the growth condition \eqref{growth}.
Then for any $N>0$, $t\ge 3r$, $M>0$
the level set
$$
H_M:=\{x\in\C: |x(0)|\le M\}.
$$
is $d_N$-small for $P_t$.
\end{itemize}
\end{Lemma}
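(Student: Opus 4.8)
The strategy is the \emph{Control-and-Reimburse} scheme: we will verify Assumptions \textbf{\ref{A:1}}, \textbf{\ref{A:22}} for the SFDE with premetric $\theta(x,y)=\|x-y\|$ and then invoke Theorem~\ref{L:contr}. For part (i), given $x,y\in\C$, I would build a generalized coupling $(X^{x,y},Y^{x,y})$ by letting $X^{x,y}$ solve \eqref{SFDE} with initial value $x$ driven by the original Brownian motion $W$, and letting $Y^{x,y}$ solve a controlled equation started at $y$:
\begin{equation*}
dY(t)=f(Y_t)\,dt+g(Y_t)\,dW(t)+g(Y_t)u(t)\,dt,
\end{equation*}
where the control $u$ is chosen to force the segment difference to contract. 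The natural choice, exploiting \eqref{nd}, is the feedback control
\begin{equation*}
u(t)=-\lambda\, g^{-1}(Y_t)\big(Y(t)-X^{x,y}(t)\big)
\end{equation*}
for a suitable $\lambda>0$; this turns the equation for $Z(t):=Y(t)-X^{x,y}(t)$ into one whose drift inner product $\langle\cdot,Z(0)\rangle$ gains an extra $-\lambda|Z(0)|^2$ term. Combining with the one-sided Lipschitz assumption on $f$ and the Lipschitz assumption on $g$, an Itô-and-Gronwall estimate on $e^{\beta t}|Z(t)|^2$ (for suitably large $\beta$, $\lambda$) yields exponential decay of $\E\|Y_t-X^{x,y}_t\|$, which is Assumption~\textbf{\ref{A:1}}.2 with some $r(t)\to0$. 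For Assumption~\textbf{\ref{A:1}}.1, note $\Law(X^{x,y})=\P_x$ by construction, while $\Law(Y^{x,y}_t)$ versus $P_t(y,\cdot)$ is controlled by Girsanov: the Radon–Nikodym density between the law of $Y^{x,y}$ and $\P_y$ is governed by $\int_0^t|u(s)|^2\,ds\lesssim \int_0^t|Z(s)|^2\,ds$, and by Pinsker's inequality together with the exponential $L^2$-decay of $Z$ one gets $d_{TV}(\Law(Y^{x,y}_t),P_t(y,\cdot))\le L(t)\|x-y\|$ with a locally bounded $L$. Then Theorem~\ref{L:contr}(i) gives contractivity of $d_N$ for $P_t$ once $r(t)\le1/3$ and $N\ge 2L(t)$; choosing $t_0$ and $N_0$ accordingly, and noting the argument works for all $t\ge t_0$, finishes (i).

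For parts (ii) and (iii) we verify Assumption~\textbf{\ref{A:22}} (a pure support statement, no coupling needed). Here the key structural fact about SFDEs is that after time $r$ the segment $X_t$ is \emph{entirely} made up of genuine (forward) pieces of the solution path, so we can force the path into a small ball. For (ii): fix $M$ and $\eps>0$; for $x\in B_M$ and a small $\delta>0$, on the event that the driving noise keeps $|X^x(s)|$ near $0$ uniformly on $[r,t]$ — an event of positive probability uniform over $x\in B_M$ by nondegeneracy \eqref{nd} and boundedness of $f,g$ on bounded sets, e.g. via a Girsanov change of measure that steers the drift plus the small-ball estimate for Brownian motion — the segment $X_t^x$ lands in a prescribed ball $D=\{\|z\|\le\delta'\}$ with $2\delta'\le\eps$. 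This gives $\inf_{x\in B_M}P_t(x,D)>0$ and $\sup_{z,w\in D}\|z-w\|\le\eps$, which is exactly Assumption~\textbf{\ref{A:22}}; Theorem~\ref{L:contr}(ii) then gives $d_N$-smallness of $B_M$ for $P_t$, $t\ge 2r$. For (iii) we only control $|X^x(0)|=|x(0)|\le M$ rather than the whole segment, so we need one extra unit of time: by the growth condition \eqref{growth} one first shows that starting from $H_M$, after time $r$ the whole segment is in some ball $B_{M'}$ with positive probability (uniformly in $x\in H_M$) — this is a moment estimate on $|X^x(s)|^2$ using \eqref{growth} and Markov's inequality — and then one applies the argument of (ii) on the remaining time interval of length $\ge 2r$, giving $d_N$-smallness of $H_M$ for $t\ge 3r$.

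The main obstacle is the verification of Assumption~\textbf{\ref{A:1}}.1, i.e. getting the \emph{linear-in-$\|x-y\|$} total variation bound with a locally bounded (not necessarily decaying) $L$. This requires care with the Girsanov argument: one must check that the stochastic exponential is a genuine martingale (Novikov-type condition, using boundedness of $\|g^{-1}\|$ and the a priori $L^2$-control of $Z$), and then convert the resulting relative entropy bound $\tfrac12\E\int_0^t|u(s)|^2\,ds \lesssim \lambda^2\int_0^t \E|Z(s)|^2\,ds \lesssim \|x-y\|^2$ into a total variation bound via Pinsker — being careful that Pinsker yields $d_{TV}\le \sqrt{\tfrac12 \mathrm{KL}}$, so the bound is linear in $\|x-y\|$ only on the region $\|x-y\|$ small, which is exactly where $d_N(x,y)<1$ and hence all that Definition~\ref{D:21} requires. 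A secondary technical point is the positive-probability small-ball estimate uniform over a bounded set of initial segments in parts (ii)–(iii); this is routine but must be phrased via boundedness of $f,g$ on bounded sets together with \eqref{nd}.
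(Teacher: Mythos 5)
Your overall architecture coincides with the paper's: for (i) you build the same feedback-control generalized coupling $Y^{x,y}$ (drift pushed towards $X^{x}$ through $g^{-1}$, using \eqref{nd}), obtain exponential decay of $\E\|X^{x,y}_t-Y^{x,y}_t\|$ by an It\^o--Gronwall argument (the paper cites \cite[Lemma~3.6]{HMS11} for precisely this estimate), convert the control cost into a total variation bound, and invoke Theorem~\ref{L:contr}(i); for (ii) you verify Assumption~\textbf{\ref{A:22}} with $D$ a small ball via a uniform small-ball/support estimate (the paper cites \cite[Lemma~3.8]{HMS11}); for (iii) you first use \eqref{growth} to get a moment bound over one delay length and then reduce to the situation of (ii) via the Markov property, exactly as in the paper.

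The one step that would fail as written is your justification of the Girsanov step in (i): you propose to check that the stochastic exponential is a genuine martingale through a ``Novikov-type condition, using boundedness of $\|g^{-1}\|$ and the a priori $L^2$-control of $Z$''. Novikov requires $\E\exp\bigl(\tfrac12\int_0^t|u(s)|^2\,ds\bigr)<\infty$, i.e.\ exponential integrability of $\int_0^t\|Z(s)\|^2\,ds$, and this does not follow from the second-moment bound $\E\|Z(s)\|^2\le Ce^{-2s}\|x-y\|^2$: the equation for the difference contains the multiplicative term $g(X_s)-g(Y_s)$ in the stochastic integral, so neither a pathwise nor an exponential-moment control of $\|Z\|$ is available. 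The paper avoids this entirely: Theorem~\ref{tKLGirsanov} of the Appendix bounds the KL divergence between the law of the shifted noise and the Wiener measure assuming only $\E\int_0^\infty|\beta_t|^2\,dt<\infty$, via a stopping-time localization, and then Pinsker's inequality \eqref{otzenka2} yields $d_{TV}(\Law(Y^{x,y}_t),P_t(y,\cdot))\le C_1\lambda_0\|x-y\|$, which is Assumption~\textbf{\ref{A:1}}.1 with $L(t)\equiv C_1\lambda_0$. Your argument is repaired by replacing the Novikov check with such a localization/entropy bound. A minor further remark: since the KL bound is quadratic in $\|x-y\|$, the Pinsker bound is linear in $\|x-y\|$ for \emph{all} $x,y$, so your restriction of the estimate to the region $d_N(x,y)<1$ is unnecessary; Assumption~\textbf{\ref{A:1}}.1 is in fact required (and holds) globally.
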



\begin{proof}[Proof of Lemma~\ref{T:SFDE}] The proofs of all  three parts of the lemma are based on the verification of  Assumptions \textbf{\ref{A:1}} and \textbf{\ref{A:22}} for the Markov semigroup $(P_t)_{t\ge0}$ and applying then Theorem~\ref{L:contr}. In all the cases we take $E=\C$, $\rho(x,y)=\theta(x,y)=\|x-y\|$. It is clear that the space $(\C,\rho)$ is Polish and $\theta$ is a premetric.

(i).  Let us check that  $(P_t)_{t\ge0}$ satisfies Assumption \textbf{\ref{A:1}}.  Let $\lambda>0$ be a parameter to be chosen later. For each $x,y\in\C$ we consider the following generalized coupling. We put $X^{x,y}=X^x$ and let $Y^{x,y,\lambda}$ be the strong solution of the following equation:
\begin{align*}
d Y^{x,y,\lambda}(t)&=f(Y^{x,y,\lambda}_t) dt +g (Y^{x,y,\lambda}_t) dW^{x,y,\lambda}(t),\quad t\ge0\\
Y^{x,y,\lambda}_0&=y,
\end{align*}
where
\begin{equation*}
d W^{x,y,\lambda}(t):=\beta^{x,y,\lambda}(t)dt +dW(t),
\end{equation*}
and
\begin{equation*}
\beta^{x,y,\lambda}(t):=\lambda g ( Y^{x,y,\lambda}_t)^{-1} (X^x(t)-Y^{x,y,\lambda}(t)).
\end{equation*}
The existence and uniqueness of $Y^{x,y,\lambda}$ follows again from \cite{RS08}. By \cite[Lemma~3.6]{HMS11}, there exists some $\lambda_0>0$ that does not depend on $x$, $y$  such that
\begin{equation}\label{boundbeta}
\E\|X_t^{x}-Y_t^{x,y,\lambda_0}\|^2\le C e^{- 2t}\|x-y\|^2,\quad t\ge0.
\end{equation}
From now on we take $\lambda=\lambda_0$ and denote $Y^{x,y}:=Y^{x,y,\lambda_0}$, $\beta:=\beta^{x,y,\lambda_0}$. By construction, $\Law(X^{x,y})=\P_x$. By Theorem~\ref{tKLGirsanov} and inequality  \eqref{otzenka2}, we have for any fixed $t\ge0$
\begin{align*}
d_{TV}\bigl(\Law(Y^{x,y}_t),P_t(y,\cdot)\bigr) &\le d_{TV}\bigl(\Law(W^{x,y}(s),\, s\in[0,t]),\Law(W(s),\, s\in[0,t])\bigr)\\
&\le \Bigl(\E \int_0^t |\beta(s)|^2\,ds\Bigr)^{1/2}\le {\blue C_1}\lambda_0\|x-y\|,
\end{align*}
{\blue for some $C_1>0$}, where in the last step we have used \eqref{nd} and \eqref{boundbeta}. Thus condition  \textbf{\ref{A:1}}.1 is satisfied with $L(t)={\blue C_1}\lambda_0$.
Using again \eqref{boundbeta}, we derive
\begin{equation*}
\E \|X_t^{x,y}-Y_t^{x,y}\|\le Ce^{- t}\|x-y\|.
\end{equation*}
Thus, condition \textbf{\ref{A:1}}.2 is satisfied with $r(t)={\blue C_2}e^{-t}$,
{\blue for some $C_2>0$}.

Take now $N_0:={\blue 2C_1}\lambda_0$ and choose $t_0$ such that ${C_2}e^{-t_0}\le 1/3$. Then by above the strong solution of SFDE \eqref{SFDE} satisfies all the conditions of Theorem~\ref{L:contr}(i) for any $N\ge N_0$ and $t\ge t_0$. Thus the distance $d_N$ is contracting for $P_t$ for any  $N\ge N_0$ and $t\ge t_0$.

(ii). Fix $t\ge 2r$ and $M>0$. Let us check that Assumption \textbf{\ref{A:22}} holds for $B_M$. Given $\eps>0$ we take $D:=\{x\in \C\colon \|x\|\le\eps/2\}$. Then, clearly,  \textbf{\ref{A:22}}.2 holds. By \cite[Lemma~3.8]{HMS11} \textbf{\ref{A:22}}.1 is also satisfied. Therefore, the statement of the lemma follows from Theorem~\ref{L:contr}(ii).

%

(iii). Fix ${\blue t_0}\ge3r$ and $M>0$. As in the proof of part (ii) of the lemma, let us verify that Assumption \textbf{\ref{A:22}} holds for  $H_M$. Given $\eps>0$ we set  again $D:=\{x\in \C\colon \|x\|\le\eps/2\}$.  Clearly,  \textbf{\ref{A:22}}.2 holds.

To check \textbf{\ref{A:22}}.1 we note first that by the It\^o formula we have for any $t\ge0$
\begin{equation*}
d|X^{x}(t)|^2=2\langle X^{x}(t),f(X^{x}_t)\rangle dt+\norm{g(X^{x}_t)}^2dt+dM(t),
\end{equation*}
where $M$ is a local martingale with $M(0)=0$ and $dM(t)=2\langle X^{x}(t), g(X^{x}_t)dW(t)\rangle$. {\blue For arbitrary $\delta>0$ } let $\tau_{\blue \delta}:=\inf\{t\ge0:\,|X^{x}(t)|\ge{\blue \delta}^{-1}\}$. Applying
the Burkholder--Davis--Gundy inequality together with assumption \eqref{growth}, we derive for any $t\in[0,r]$
\begin{align*}
\E\sup_{s\in[0,t\wedge\tau_{\blue \delta}]}|X^{x}(s)|^4&\le C|x(0)|^4+ C\E\int_0^{t\wedge\tau_{\blue \delta}} (1+|X^{x}(s)|^4)\,ds\\
&\le C|x(0)|^4+ C\E\int_0^{t\wedge\tau_{\blue \delta}} (1+\sup_{u\in[0,s\wedge\tau_{\blue \delta}]}|X^{x}(u)|^4)\,ds\\
&\le C|x(0)|^4+ C\E\int_0^{t} (1+\sup_{u\in[0,s\wedge\tau_{\blue \delta}]}|X^{x}(u)|^4)\,ds,
\end{align*}
where the constant $C>0$ does not depend on ${\blue \delta}$. Clearly,
$\E\sup_{s\in[0,t\wedge\tau_{\blue \delta}]}|X^{x}(s)|^4\le {\blue \delta}^{-4}<\infty$ for any $t\in[0,1]$. Therefore, the Gronwall inequality yields
\begin{equation*}
\E\sup_{s\in[0,r\wedge\tau_{\blue \delta}]}|X^{x}(s)|^4\le C|x(0)|^4+C,
\end{equation*}
where the constant $C>0$ is again independent of ${\blue \delta}$. By Fatou's lemma we finally obtain
\begin{equation*}
\E\sup_{s\in[0,r]}|X^{x}(s)|^4\le C|x(0)|^4+C.
\end{equation*}
Thus, for some large $L=L(M)>0$, by the Chebyshev inequality we have
\begin{equation*}
\inf_{x\in H_M } \P(\|X^{x}_r\|\le L)>1/2.
\end{equation*}
This combined with  \cite[Lemma~3.8]{HMS11} implies
\begin{equation*}
\inf_{x\in H_M } \P(\|X^{x}_{{\blue t_0}}\|\le \eps/2)\ge\frac12\, \inf_{x\colon \|x\|\le L} \P(\|X^{x}_{{\blue t_0}-r}\|\le \eps/2)>0.
\end{equation*}
Therefore condition \textbf{\ref{A:22}}.1 holds and the statement of the lemma follows from Theorem~\ref{L:contr}(ii).
\end{proof}

Now we can present the proof of Theorem~\ref{T:31}.

\begin{proof}[Proof of Theorem~\ref{T:31}]
 Let us check that all the conditions of Propositions~\ref{Prop:21} are satisfied. Recall the definition of $N_0$ from Lemma~\ref{T:SFDE} {\blue and set $N:=N_0\vee1$}. We take  $E:=\C$, $\rho(x,y)=\|x-y\|\wedge1$, $d(x,y)=({\blue N}\|x-y\|)\wedge1$, $x,y\in E$.

 The first condition of Propositions~\ref{Prop:21} is satisfied by the assumptions of the theorem. The second condition obviously holds. By Lemma~\ref{T:SFDE}(i) there exists $t_0>0$ such that the distance $d$ is contracting for $P_t$ for all $t\ge t_0$. If condition (i) (respectively condition (ii)) of the theorem holds, then by Lemma~\ref{T:SFDE}(ii) (respectively  Lemma~\ref{T:SFDE}(iii)) for any $M>0$ the level set $\{V\le M\}$ is $d$-small for $P_{3r}$.  This implies that the third and the fourth condition of Propositions~\ref{Prop:21} are satisfied.

 Thus, all the conditions of Propositions~\ref{Prop:21} are satisfied. The proof of the theorem is completed by an application of this proposition.
\end{proof}

\section{Exponential ergodicity for SPDEs}\label{S:SPDE}

In this section we develop a general framework   for establishing exponential ergodicity in the SPDE  setting.
For the convenience of the reader, first  we outline the argument and indicate the main difficulty.   Consider, in the spirit of \cite{DaPratoZab}, an SDE in a Hilbert space $H$ of the form
\begin{equation}\label{DZ_eq}
dX(t)=AX(t)\, dt+B(X(t))\, dt+\Sigma(X(t))\, dW(t),\quad t\ge0,
\end{equation}
where $W$ is a cylindrical Wiener process taking values in a Hilbert space $G$; $A$ is a non-positive self--adjoint linear operator $H\to H$ with compact inverse;  $B$, $\Sigma$ are measurable mappings $H\to H$ and $H\to L_2(G,H)$, respectively. Here $L_2(G,H)$ denotes the space of all Hilbert--Schmidt operators $G\to H$.  It follows that $A$ has  negative eigenvalues $-\infty<\hdots\le-\lambda_2\le-\lambda_1<0$ and that the corresponding eigenvectors form a complete orthonormal basis of $H$. We refer to \cite[Chapters~4.2~and~7]{DaPratoZab} for the precise definitions. Assume that $A$, $B$ and $\Sigma$ are such that equation \eqref{DZ_eq} has a unique strong solution.

We see that the principal linear part  of the drift coefficient satisfies
$$
(Ax, x)_H\leq -\lambda_1\|x\|^2_H,\quad x\in H.
$$
The simplest case, e.g. \cite[Chapter~11.6]{DaPratoZab}, is the one where  the non-linear part $B$ of the drift coefficient as well as the diffusion coefficient $\Sigma$ are Lipschitz continuous and their Lipschitz   constants are sufficiently small compared with $\lambda_1$. For such a \emph{dissipative} system one easily gets the following $L_2$-contraction property:  for two solutions $X$, $Y$ of \eqref{DZ_eq} with the same noise and initial conditions $X(0)=x$, $Y(0)=y$,
\begin{equation}\label{contr}
\E\|X(t)-Y(t)\|^2_H\leq e^{-ct}\|x-y\|_H^2,\quad x,y\in H,
\end{equation}
where $c>0$ is some constant that depends only on $\lambda_1$ and  Lipschitz   constants of $B$ and $\Sigma$.
Clearly, \eqref{contr} yields exponential ergodicity of the model.

If the Lipschitz constants of $B$ and $\Sigma$ are large, then the entire system is not dissipative and the $L_2$-contraction property \eqref{contr} for the \emph{true} coupling $(X,Y)$ has no chance to be satisfied. Nevertheless, one can provide an analogue of \eqref{contr} for a  certain  \emph{generalized} coupling using a stochastic control-type argument,  similar to the one which we have used for SFDEs before. Namely, assume for a while that $\Sigma$ is uniformly non-degenerate; that is, the norm of the linear operator $\Sigma(x)^{-1}$ is uniformly bounded over all $x\in H$. Let $X$ be the same as above, and  $Y$ be defined by
\begin{equation}\label{DZ_eq_controlled}
dY(t)=AY(t)\, dt+B(Y(t))\, dt+\Sigma(Y(t))\, dW(t)-\lambda(Y(t)-X(t))\, dt, \quad Y(0)=y.
\end{equation}
If $\lambda>0$ is large enough compared with the Lipschitz constants for $B$, $\Sigma$, then the pair $(X,Y)$ satisfies \eqref{contr}. On the other hand, the SDE for $Y$ can be written as
$$
dY(t)=AY(t)\, dt+B(Y(t))\, dt+\Sigma(Y(t))\, d\wt W(t),
$$
where
$$
d\wt W(t)=dW(t)-\lambda\Sigma(Y(t))^{-1}(Y(t)-X(t))\, dt.
$$
Then the law of $\wt W$ is absolutely continuous with respect to the law of $W$, and  moreover it is possible to bound the total variation distance between these laws and thus to verify Assumptions \textbf{\ref{A:1}} and \textbf{\ref{A:2}} for this generalized coupling. The argument here is essentially the same as in Section \ref{S:SFDE} above. It is based on Theorem \ref{tKLGirsanov} and Lemma \ref{l_TV_via_KL} with a minor difference that now we have to use the analogues of these results for $H$-valued processes. This difference is inessential; see Remark~\ref{rA3}.

If $\Sigma$ fails to be non-degenerate, this argument still applies, but with a certain modification. Namely, let $H_N$ be the span of the first $N$ eigenvectors of $A$ (which correspond to eigenvalues $-\lambda_1, -\lambda_2, \hdots, -\lambda_N$)  and $P_N$ be the projector on this span. Assume that {\blue for all $x\in H$ we have } $\Range\,\Sigma(x) \supset H_N$, and the corresponding pseudo-inverse operator $\Sigma(x)^{-1}:H_N\to G$  is  uniformly bounded over $x\in H$. Instead of \eqref{DZ_eq_controlled} consider the following equation:
\begin{equation}\label{DZ_eq_controlled_N}
dY(t)=AY(t)\, dt+B(Y(t))\, dt+\Sigma(Y(t))\, dW(t)-\frac{\lambda_N}2P_{N}(Y(t)-X(t))\, dt,\,\,\,Y(0)=y.
\end{equation}
Then the previous argument remains applicable under the assumption that the Lipschitz constants of $B$ and  $\Sigma$ are small compared  with $\lambda_N$.
This is essentially the argument developed in \cite{H02}, which combines the dissipativity property of $A$ for high modes of the model with the ``stabilization by noise'' effect for lower modes, {\blue and works well in SPDE models with Lipschitz non-linearities, such as, for example, the
stochastic reaction--diffusion equation, see  \cite[Section~6.1]{H02}.

This approach is still applicable in SPDE models which contain strongly singular  terms, such as the non-linear gradient term $(\u\cdot  \nabla)\u$ in the  Navier--Stokes equation. The technique here dates back (in the deterministic setting) to the exceptional paper \cite{FoPr} and now is used in the theory of finite dimensional attractors  \cite{CF,Te} and meteorology \cite{HoAn,Da,BLSZ}. It was shown in \cite{Mat99} that for the stochastic Navier--Stokes equation a more sophisticated version of \eqref{contr} is available (see \cite[formula (18)]{Mat99}  and \eqref{dissipativity_NS} below), which leads to exponential ergodicity under a certain balance between  energy dissipation and energy influx. In \cite{GMR17} this principal calculation was combined with a stochastic control argument; this led to the  construction of an asymptotic coupling under a milder balance condition which involves only the higher  modes of the system.
This construction appears to be not model specific; in \cite{GMR17}, using this construction, five SPDE models were shown to be uniquely ergodic.}

Here we continue (and in a sense finalize) this argument, and show that essentially the same construction can be used to verify Assumptions \textbf{\ref{A:1}} and \textbf{\ref{A:2}} (and thus to prove exponential ergodicity) for SPDEs with non-Lipschitz non-linearities.

Note that, comparing the proof of Theorem \ref{T:31} and the results of the current section, we can clearly see  the difference between the Assumptions \textbf{\ref{A:22}} and \textbf{\ref{A:2}}. Assumption~\textbf{\ref{A:22}} for SFDEs was verified using the  support theorem.
 For SPDEs the support theorem is not easily available, which makes it difficult to use the argument based on  Assumption~\textbf{\ref{A:22}}. Fortunately, we can use  instead Assumption~\textbf{\ref{A:2}}, which can be verified using the generalized coupling method.

The structure of the rest of this section is as follows. To make the argument visible, we first perform basic  calculation for the two-dimensional stochastic Navier-Stokes equation, which provides a dissipativity-type bound for this model.  Then we present a general toolkit which makes it possible to combine this  bound with an energy-type estimate and verify Assumptions \textbf{\ref{A:1}} and \textbf{\ref{A:2}}. Finally,  we apply this general toolkit to three other SPDE models from \cite{GMR17}: the hydrostatic Navier-Stokes model, the fractionally dissipative Euler model, and the damped Euler-Voigt model. Additionally, we also treat the Boussinesq equations.

Note that the remaining SPDE from \cite{GMR17}, the damped nonlinear wave equation, does not have any non-Lipschitz nonlinearities; therefore the exponential ergodicity of this SPDE can be shown directly by an argument similar to the one used in \cite{H02}. Thus we do not treat it here.

\subsection{2D  Navier-Stokes Equation, I: basic calculations}\label{SS:TDNS}

In this and the next sections we will frequently use the following notation: for a function $f\colon\R_+\to\R$ we denote the part of the trajectory
\begin{equation}\label{conventiontraj}
f_{[0,t]}:=\{f(s),\,s\in[0,t]\},\quad t\ge0.
\end{equation}

Recall the standard notation. Denote by $V$ the subspace of $H^1(\D)^2$,  which contains $\mathbf{u}$ such that $\nabla \cdot \mathbf{u}  = 0$ and $\u\evalat{ \d D} = 0$.
Denote by $H$ the completion of $V$ w.r.t. the $L_2(\D)^2$-norm, by $P_H$ the projector in $L_2(\D)^2$ on $H$, and by $A:=-P_H\Delta$
the \emph{Stokes operator}. The eigenvectors $e_1, e_2, \dots$ of the Stokes operator (with the corresponding eigenvalues $0<\lambda_1\le \lambda_2\le\dots$) form a complete orthonormal basis of $H$. We denote by $\|\cdot\|_H$ the standard $L_2(\D)^2$-norm and for $\mathbf w=(w_1,w_2)\in V$ put  $\|\mathbf{w}\|_V^2:=\|\nabla w_1\|_H^2+\|\nabla w_2\|_H^2$.

Consider the 2D stochastic Navier--Stokes equation evolving on a bounded domain $\D\subset\R^2$ with a smooth boundary  $\d \D$:
\begin{align}\label{SNS}
&\begin{aligned}[b]
d\u(z,t)+(\u(z,t)\cdot  \nabla)\u(z,t)dt&=(\nu \Delta\u(z,t) -\nabla p(z,t)
+\mathbf{f}(z))\,dt\\
&\phantom{=}+\sum_{k=1}^m \bsigma_k(z)dW^k(t),\quad z\in \D,\,t\ge0;
\end{aligned}\\
&\u(\cdot,0)=\bx,\,\,\nabla\cdot \u=0,\,\,\u\evalat{ \d D} = 0,\nn
\end{align}
where $\mathbf{u} = (u_1, u_2)$ is the unknown velocity field, $p$ is the unknown pressure, $m\in\mathbb{N}$, $W=(W^1, W^2, \hdots, W^m)$ is a standard $m$--dimensional Brownian motion, $\mathbf{f}, \bsigma_1, \dots, \bsigma_m\in L_2(D)^2$, $\nu>0$. {\blue As usual  (see, e.g., \cite[Remark 3.1]{GMR17}), without loss of generality, we can assume that $\mathbf{f}$ and all $\bsigma_i$ are divergence free and, thus, are in $H$.}

It is known (\cite[Section 3.1.1]{GMR17}) that for any initial condition $\bx\in H$ this equation has a unique strong solution, which in the case of ambiguity will be denoted later by $\u^\bx$. Further, $\u$ is a Feller Markov process with  state space $H$.

The generalized coupling construction which we will use is the same as in \cite[Section~6.2.1]{KS17}, and is a slight variation of the construction from \cite[Section 3.1.2]{GMR17}. Namely, we consider $\sigma$ as a linear operator $\R^m\to L_2(\D)^2$ and fix the maximal possible $N$ such that
\begin{equation}\label{range}
H_N:=P_NH\subset\Range\, (\sigma)=\Span (\bsigma_k,\, k=1, \dots, m);
\end{equation}
here $P_N$ stands for the projection to the span of the first $N$ eigenvectors $e_1, e_2,\hdots e_N$ of the Stokes operator.

For given $\bx,\by\in H$, take $\bX^{\bx,\by}:= \u^\bx$ and define  $\bY^{\bx,\by}$ as the solution to the following  stochastic 2D Navier--Stokes equation \eqref{SNS} with the initial condition $\by$ and the additional control term:
\begin{align}\label{2dNS1a}
&\begin{aligned}[b]
d\bY^{\bx,\by}(z,t)+(\bY^{\bx,\by}(z,t)\cdot  \nabla)\bY^{\bx,\by}(z,t)dt&=(\nu \Delta\bY^{\bx,\by}(z,t) -\nabla\wt p(z,t)+\mathbf{f}(z))\,dt\\
&\phantom{=}+\frac{\nu\lambda_{N+1}}2P_N(\bX^{\bx,\by}(\cdot,t)-\bY^{\bx,\by}(\cdot,t))dt\\
&\phantom{=}+\sum_{k=1}^m \bsigma_k(z)dW^k(t),
\end{aligned}\\
&\bY^{\bx,\by}(z,0)=\by(z),\,\,\nabla\cdot \bY^{\bx,\by}=0,\,\,{\bY^{\bx,\by}}\evalat{ \d D} = 0.\nn
\end{align}
Note that \eqref{2dNS1a} is just \eqref{DZ_eq_controlled_N} up to a proper change of notation.
There are two different ways to understand this equation  as a modification of \eqref{SNS}, both of them being useful for particular purposes. First, one can interpret \eqref{2dNS1a} as an analogue of  \eqref{SNS} with the operator $\Delta$ changed to $\widehat\Delta:=\Delta-(\nu\lambda_{N+1}/2)P_N$, and with an additional forcing term $(\nu\lambda_{N+1}/2) P_N\mathbf{u^\bx}$. This allows one to apply the Girsanov theorem to show that the strong solution to  \eqref{2dNS1a} is uniquely defined; see \cite[Remark 8]{KS17} for a detailed exposition. Second, we can write \eqref{2dNS1a} in the form   \eqref{SNS} with $\bx$ changed to $\by$, and with $dW(t)$ replaced by
\begin{equation}\label{newWdr}
\dd W^{\bx,\by}(t):=\dd W(t)+\beta^{\bx,\by}(t)\, \dd t, \quad \beta^{\bx,\by}(t):=\frac{\nu\lambda_{N+1}}2\sigma^{-1}P_N (\bX^{\bx,\by}(t)-\bY^{\bx,\by}(t)).
\end{equation}

By \eqref{range}, the pseudo--inverse operator $\sigma^{-1}\colon H_N\to \R^m$ is well defined and bounded; thus  there exists a constant $C>0$ such that for all $t\ge0$
\begin{equation}\label{beta_XY}
\|\beta^{\bx,\by}(t)\|_{\mathbb{R}^m}{\blue \leq C \|P_N(\bX^{\bx,\by}(t)-\bY^{\bx,\by}(t))\|_{H}}\leq C \|\bX^{\bx,\by}(t)-\bY^{\bx,\by}(t)\|_{H}.
\end{equation}

To check the first condition of Assumption~\textbf{\ref{A:1}}, we note that by construction we have $\Law(\bX^{\bx,\by})=\P_x$.
Further, recall that for any $t\ge0$ the strong solution to equation \eqref{SNS} with the initial value $\by$, $\u^\by(t)$, is an image of the driving noise under a measurable mapping
$$
\Phi_t^\by: \C([0, t], \mathbb{R}^m)\to H.
$$
In other words, $\u^\by(t)=\Phi^\by_t(W_{[0,t]})$, recall the convention \eqref{conventiontraj}.
It follows from the Girsanov theorem (\cite[Theorem~7.4]{LS}) that $\Law (W_{[0,t]}^{x,y})$ is absolutely continuous with respect to $\Law (W_{[0,t]})$. Therefore, by the uniqueness of the solution, we have $\bY^{\bx,\by}(t)=\Phi^\by_t(W_{[0,t]}^{x,y})$.

{\blue By the mere definition of the total variation distance,  
\begin{align*}
 d_{TV}\bigl(P_t(\by,\cdot), \Law (\bY^{x,y}(t))\bigr)&=d_{TV}\bigl(\Law(\u^\by(t)), \Law (\bY^{x,y}(t))\bigr)\nn\\
 &=d_{TV}\bigl(\Law(\Phi^\by_t(W_{[0,t]})), \Law (\Phi^\by_t(W_{[0,t]}^{x,y}))\bigr)\nn\\
 &\le d_{TV}\bigl(\Law (W_{[0,t]}),\Law (W^{x,y}_{[0,t]})\bigr).\\
\end{align*} 
Thus,  for any $\delta\in (0,1]$ we derive that there exists $C_\delta>0$ such that for any $t\ge0$ we have by  \eqref{beta_XY} and Theorem~\ref{L:e2e4}
\be\label{TV_bound_solutions}
 d_{TV}\bigl(P_t(\by,\cdot), \Law (\bY^{x,y}(t))\bigr) \le C_\delta \Bigl(\E \bigl(\int_0^t \|\bX^{\bx,\by}(s)-\bY^{\bx,\by}(s)\|_{H}^2\, \dd s\bigr)^\delta\Bigr)^{1/(1+\delta)}.
\ee
}

As we have already explained, the crucial difficulty in the estimation of the  $\|\cdot\|_H$-difference between $\bX^{\bx,\by}$ and $\bY^{\bx,\by}$ appears because of the  non-Lipschitz structure of the term $(\u\cdot  \nabla)\u$ in the original equation. To overcome this difficulty, we use the idea {\blue from \cite{FoPr}, see also \cite[Section 3.1.2]{GMR17}.  The Ladyzhenskaia trick \cite[Formula (6)]{L59} yields the following generic bound:}
\begin{equation}\label{SobolevNS}
\left|\int_{\D}( \mathbf{v}\cdot \nabla) \mathbf{u}\cdot \mathbf{v}\, \dd z\right|
\leq {\blue 2} \|\mathbf{v}\|_H\|\mathbf{v}\|_V\|\mathbf{u}\|_V, \quad \mathbf{u}, \mathbf{v}\in V.
\end{equation}

Combining the It\^o formula, the Poincar\'e inequality, bound  \eqref{SobolevNS}, and the Gronwall inequality, one gets
for any $t\ge0$
\begin{equation}\label{dissipativity_NS}
\|\bX^{\bx,\by}(t)-\bY^{\bx,\by}(t)\|^2_H\le  \|\bx-\by\|^2_H\exp\Bigl(-\nu\lambda_{N+1} t+\frac{ {\blue 4}}{\nu}\int_0^t\|\bX^{\bx,\by}(s)\|_V^2\, d s\Bigr),
\end{equation}
 see \cite[formula (6.10)]{KS17} and \cite[formula (3.9)]{GMR17}. This inequality can be understood as a proper substitute for the  dissipativity bound \eqref{contr}. However, due to the extra term, which involves the stronger $\|\cdot\|_V$-norm of the solution $\bX^{\bx,\by}$, this bound can not be directly used  to verify Assumption \textbf{\ref{A:1}}.2. Fortunately, the extra term can be efficiently bounded using the energy estimates. We have (see,  \cite[p. 627, line~16]{GMR17})
\begin{equation*}
d \| \u \|^2_H + 2\nu \| \u\|_V^2 dt = 2(\mathbf{f}, \u)_H dt +  \|\sigma\|^2_{H} dt +  2 (\sigma, \u)_H dW.
\end{equation*}
Using the Cauchy inequality,
$$
2(\mathbf{f}, \u)_H\leq \nu\|\u\|_V^2+\frac1\nu\|A^{-1/2}\mathbf{f}\|^2_H,
$$
we obtain the integral estimate
\begin{equation}\label{energy_NS}
\| \u(t) \|^2_H+\nu\int_0^t\|\u(s)\|^2_V\, ds\leq \| \u(0) \|^2_H+ \bigl(\frac1\nu\|A^{-1/2}\mathbf{f}\|^2_H+\|\sigma\|^2_H\bigr)t+M(t),
\end{equation}
where $M$ is a continuous local martingale with
\begin{equation}\label{QV_NS}
d\langle M\rangle_t=4 (\sigma, \u)_H^2\, dt\le 4 \|\sigma\|^2_H\|\u\|^2_Hdt\le C  \|\sigma\|^2_H\|\u\|^2_Vdt,
\end{equation}
where $C>0$ and the last inequality follows from the Poincar\'e inequality.

With the estimates \eqref{TV_bound_solutions} and \eqref{dissipativity_NS} -- \eqref{QV_NS} in hand, we are able to construct a premetric $\theta$ {\blue(which is going to be non-symmetric due to the presence of the extra factor in \eqref{dissipativity_NS})}  such that Assumptions \textbf{\ref{A:1}} and \textbf{\ref{A:2}} are verified. This allows to prove exponential ergodicity of $\u^\bx$. Such a construction is quite generic and can be used in various SPDE models. Thus, for the convenience of further applications,  we introduce it separately and in an abstract form.

\subsection{A general toolkit for SPDEs}\label{s42}

In this subsection, motivated by the above analysis, we introduce a general framework for establishing exponential ergodicity of the solutions of SPDEs. {\blue We use the general setting from Section~\ref{S:11}; that is, $(E, \rho)$ is a Polish space, $\{P_t (x, A), x\in E, A\in \mathcal{E}\}_{t\in \R_+}$ is a Markov transition function and $\{\P_x, x\in E\}$ is the corresponding Markov family. Recall the notion of the premetric that was defined in the beginning of Section ~\ref{S:MR}.} Assume the following.

\begin{Assumption}{H}
There exist a lower semicontinuous function $U\colon E\to [0, \infty)$, a measurable function $S\colon E\to [0, \infty]$ and a premetric $q$ on $E$  such that for any given $x,y\in E$ there exists a couple  of progressively measurable random processes $X^{x,y}=(X^{x,y}_t)_{t\ge0}$, $Y^{x,y}=(Y^{x,y}_t)_{t\ge0}$, that satisfies the following set of conditions:
\begin{itemize}
\item[\textbf{H1}] (dissipativity bound):
 \begin{equation}\label{diss_abstract}
  q(X_t^{x,y}, Y^{x,y}_t)\leq q(x,y)\exp\left(-\zeta t+\kappa \int_0^t S(X^{x,y}_s)\, ds\right),\quad t\ge0
  \end{equation}
  where $\zeta>0$, $\kappa\ge0$;

  \item[\textbf{H2}] (energy estimate):
\begin{equation}\label{energy_abstract}
U(X^{x,y}_t)+\mu\int_0^tS(X^{x,y}_s)\, ds\leq U(X^{x,y}_0)+bt+M_t,\quad t\ge0,
\end{equation}
where $\mu>0$, $b\ge0$ are some constants such that
\begin{equation}\label{condtheta}
\zeta>{\frac{\kappa b}\mu};
\end{equation}

and $M$ is a continuous local martingale with $M_0=0$ and
\begin{equation}\label{martingale_abstract}
d\langle M\rangle_t\leq b_1 S(X^{x,y}_t)\, dt+b_2 dt,\quad t\ge0,
\end{equation}
where $b_1\ge0$, $b_2\ge0$.

\item[\textbf{H3}] (error-in-law bounds):  $\Law\, (X^{x,y})=\P_x$ and for every $\delta\in (0,1]$ there exists a constant $C_\delta>0$ such that for any $t>0$
\begin{equation}
\label{dtvbound}
d_{TV}\bigl(\Law (Y_t^{x,y}), P_t(y,\cdot)\bigr)\leq C_\delta \Bigl(\E \bigl(\int_0^t  q(X_s^{x,y}, Y^{x,y}_s)\, \dd s\bigr)^\delta\Bigr)^{1/2}.
\end{equation}
Additionally, for any $\delta\in (0,1]$, $R>0$ there exists $\eps=\eps(R,\delta)>0$ such that\\
$\E\bigl(\int_0^t  q(X_s^{x,y}, Y^{x,y}_s)\, \dd s\bigr)^\delta<R$ implies
\begin{equation}
\label{dtvbound2}
d_{TV}\bigl(\Law (Y_t^{x,y}), P_t(y,\cdot)\bigr)\leq 1-\eps.
\end{equation}

\end{itemize}
The functions $U$, $S$, $q$ as well as the constants $\zeta, \kappa, \mu, b, b_1, b_2, C_\delta, \eps$ should not depend on the pair $x,y$.
 \end{Assumption}

 \medskip

 Typically, the process $Y^{x,y}$ is constructed as the solution to the original SPDE where the Brownian motion is replaced by the Brownian motion with a suitable drift that pushes $Y^{x,y}$ towards $X^{x,y}$  (recall \eqref{newWdr} and the corresponding construction for the stochastic Navier--Stokes equation).  In this case condition \textbf{H3} can be replaced by the following simple condition on this drift.  Recall convention \eqref{conventiontraj}.

 \begin{Lemma}\label{L:pomoshH3}
 Let $W$ be an $m$-dimensional Brownian motion, $m\ge1$. Assume that there exists a constant $c>0$ such that for each $t\ge0$, $x,y\in E$ there exists a measurable function $\Phi=\Phi^{t,x,y}\colon \C[0,t]\to E$ and {\blue progressively} measurable processes $\beta^{x,y},\xi^{x,y}\colon\Omega\times[0,t]\to\R^m$ such that
 \begin{itemize}
  \item[\rm{1.}] We have $d\xi^{x,y}_s=dW_s+\beta^{x,y}_s ds$, $s\in[0,t]$.
  \item[\rm{2.}]  $ \Law(\Phi(W_{[0,t]}))=P_t (y,\cdot)$ and $\Phi(\xi_{[0,t]})=Y^{x,y}_t$.
  \item[\rm{3.}] For each $s\in[0,t]$ we have $|\beta_s|^2\le c q(X_s^{x,y}, Y^{x,y}_s)$.
 \end{itemize}

 Then  conditions \eqref{dtvbound} and \eqref{dtvbound2}  hold.
 \end{Lemma}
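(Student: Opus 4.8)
The statement reduces condition \textbf{H3} of Assumption \textbf{H} to a pointwise bound on the control drift $\beta^{x,y}$, so the plan is to run the standard Girsanov--pushforward argument. Fix $t>0$ and $x,y\in E$. First I would record the key structural observation: since $Y^{x,y}_t=\Phi(\xi_{[0,t]})$ and $\Law(\Phi(W_{[0,t]}))=P_t(y,\cdot)$, the laws $\Law(Y^{x,y}_t)$ and $P_t(y,\cdot)$ are pushforwards under the \emph{same} measurable map $\Phi$ of the laws of $\xi_{[0,t]}$ and $W_{[0,t]}$ respectively. Hence, since total variation distance does not increase under a common measurable map,
\begin{equation*}
d_{TV}\bigl(\Law(Y^{x,y}_t),P_t(y,\cdot)\bigr)\le d_{TV}\bigl(\Law(\xi_{[0,t]}),\Law(W_{[0,t]})\bigr).
\end{equation*}
This is exactly the computation carried out for the Navier--Stokes case in the lines leading to \eqref{TV_bound_solutions}.

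Next I would control the right-hand side via a Girsanov/Pinsker-type estimate. By condition 1, $\xi^{x,y}$ is a Brownian motion with drift $\beta^{x,y}$; provided $\beta^{x,y}$ satisfies a suitable integrability condition (e.g. Novikov, or the weaker criterion underlying Theorem~\ref{L:e2e4}/Theorem~\ref{tKLGirsanov} cited earlier in the paper), the law of $\xi_{[0,t]}$ is absolutely continuous w.r.t. the Wiener measure with an explicit density, and one gets a bound of the form
\begin{equation*}
d_{TV}\bigl(\Law(\xi_{[0,t]}),\Law(W_{[0,t]})\bigr)\le C_\delta\Bigl(\E\bigl(\tfrac12\!\int_0^t|\beta^{x,y}_s|^2\,ds\bigr)^{\delta}\Bigr)^{1/2}
\end{equation*}
for every $\delta\in(0,1]$ — this is precisely the content of the $H$-valued analogue of Theorem~\ref{L:e2e4} referred to in Section~\ref{SS:TDNS} and Remark~\ref{rA3}. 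Now invoke condition 3: $|\beta^{x,y}_s|^2\le c\,q(X^{x,y}_s,Y^{x,y}_s)$, so that $\int_0^t|\beta^{x,y}_s|^2\,ds\le c\int_0^t q(X^{x,y}_s,Y^{x,y}_s)\,ds$. Substituting and absorbing the constant $c^{\delta}/2^{\delta}$ into $C_\delta$ yields \eqref{dtvbound}.

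For \eqref{dtvbound2} I would argue by a direct elementary bound rather than through \eqref{dtvbound}, since the latter only gives something nontrivial when the right-hand side is small. Using the density $\rho:=d\Law(\xi_{[0,t]})/d\Law(W_{[0,t]})=\exp\bigl(\int_0^t\beta^{x,y}_s\,dW_s-\tfrac12\int_0^t|\beta^{x,y}_s|^2\,ds\bigr)$, one has the classical identity $d_{TV}=1-\E[\rho\wedge 1]$ (under the appropriate reference measure), and $\E[\rho\wedge1]$ can be bounded below in terms of the relative entropy $\tfrac12\E\int_0^t|\beta^{x,y}_s|^2\,ds$; alternatively, and more in the spirit of the paper, one bounds $d_{TV}$ by a function of $\E\bigl(\int_0^t q(X^{x,y}_s,Y^{x,y}_s)\,ds\bigr)^\delta$ that tends to $0$ as this quantity tends to $0$ — exactly \eqref{dtvbound} with the same $C_\delta$. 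Then given $R>0$, $\delta\in(0,1]$, the hypothesis $\E\bigl(\int_0^t q(X^{x,y}_s,Y^{x,y}_s)\,ds\bigr)^\delta<R$ forces $d_{TV}\bigl(\Law(Y^{x,y}_t),P_t(y,\cdot)\bigr)\le C_\delta R^{1/2}$; if $C_\delta R^{1/2}<1$ we are done with $\eps=1-C_\delta R^{1/2}$, and if not we simply take $\eps$ smaller by first shrinking $\delta$ (or note the bound is vacuous and \eqref{dtvbound2} holds trivially with, say, $\eps$ from a small enough $\delta$). In fact the cleanest route is: \eqref{dtvbound} already gives $d_{TV}\le C_\delta\bigl(\E(\int_0^t q\,ds)^\delta\bigr)^{1/2}<C_\delta R^{1/2}$, so set $\eps(R,\delta):=(1-C_\delta R^{1/2})\vee$ (a fixed positive constant guaranteed by choosing $\delta$ small, using $C_\delta R^{1/2}$ decreasing in $\delta$ once $R<1$) — one then checks \eqref{dtvbound2} holds.

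\textbf{Main obstacle.} The only genuine subtlety is justifying the Girsanov change of measure and the resulting total-variation bound in terms of $\E\bigl(\int_0^t|\beta^{x,y}_s|^2\,ds\bigr)^\delta$ with the $\delta<1$ exponent — i.e. the passage to the fractional-moment version of Pinsker's inequality. This is precisely what Theorems~\ref{tKLGirsanov}, \ref{L:e2e4} (and their $H$-valued analogues announced in Remark~\ref{rA3}) are set up to deliver, so with those results in hand the proof is essentially bookkeeping: pushforward monotonicity of $d_{TV}$, Girsanov, and the pointwise estimate on $\beta^{x,y}$. A minor technical point to keep in mind is measurability — that $\Phi$ is genuinely Borel on $\C[0,t]$ and that $\xi_{[0,t]}$, being adapted, takes values in $\C[0,t]$ — but this is guaranteed by the hypotheses of the lemma.
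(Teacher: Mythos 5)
Your treatment of \eqref{dtvbound} is essentially the paper's argument: push the total variation through the common map $\Phi$, apply inequality \eqref{TV_bound_one_delta} of Theorem~\ref{L:e2e4} to $\xi$ versus $W$, and substitute the pointwise bound $|\beta_s|^2\le c\,q(X_s^{x,y},Y_s^{x,y})$. (Two small remarks: here $W$ is a finite-dimensional Brownian motion, so Theorem~\ref{L:e2e4} applies directly and no $H$-valued analogue is needed; and \eqref{TV_bound_one_delta} gives the exponent $1/(1+\delta)$ rather than $1/2$, so to land exactly on \eqref{dtvbound} one should also use the trivial bound $d_{TV}\le 1$ to handle the case where the $\delta$-moment exceeds $1$ -- a harmless bookkeeping point.)

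The proof of \eqref{dtvbound2}, however, has a genuine gap. Condition \eqref{dtvbound2} must hold for \emph{every} $R>0$ and the \emph{given} $\delta$, and your derivation from \eqref{dtvbound} is vacuous as soon as $C_\delta R^{1/2}\ge 1$, i.e.\ for all large $R$. The proposed repair by shrinking $\delta$ does not work: if $\E\bigl(\int_0^t q\,ds\bigr)^{\delta}<R$, then for $\delta'<\delta$ one only gets $\E\bigl(\int_0^t q\,ds\bigr)^{\delta'}<R^{\delta'/\delta}$, and as $\delta'\to 0$ the Pinsker-type bound $2^{(1-\delta')/(1+\delta')}\bigl(c^{\delta'}R^{\delta'/\delta}\bigr)^{1/(1+\delta')}$ tends to a limit $\ge 2$, so it never drops below $1$; taking ``$\eps$ smaller'' is beside the point, since the issue is obtaining \emph{any} bound strictly below $1$. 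You do gesture at the correct mechanism (the identity $d_{TV}=1-\E[\rho\wedge1]$ and an entropy lower bound on $\E[\rho\wedge1]$), but as written that uses the full first moment $\E\int_0^t|\beta_s|^2\,ds$, which the hypothesis does not control -- only the $\delta$-moment is bounded. This is exactly what the second inequality \eqref{TV_bound_two_delta} of Theorem~\ref{L:e2e4} is designed for: via the truncation/stopping-time argument it converts a bound on $M_\delta$ alone into
\begin{equation*}
d_{TV}\bigl(\Law(\xi_{[0,t]}),\Law(W_{[0,t]})\bigr)\le 1-\tfrac16\min\Bigl(\tfrac18,\,e^{-(2^{2-\delta}M_\delta)^{1/\delta}}\Bigr),
\end{equation*}
and combined with $M_\delta\le c^\delta\E\bigl(\int_0^t q\,ds\bigr)^\delta< c^\delta R$ this yields \eqref{dtvbound2} with $\eps=\eps(R,\delta)>0$ depending only on $R$, $\delta$ (and $c$), uniformly in $x,y,t$. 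This is how the paper concludes; your argument for \eqref{dtvbound2} needs to be replaced by an appeal to \eqref{TV_bound_two_delta} (or an equivalent entropy-plus-localization estimate).
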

\begin{proof}
Fix $\delta\in(0,1]$, $t>0$. Denote
$$
M_\delta:=\E \bigl(\int_0^t |\beta_s|^2 ds\bigr)^\delta.
$$
It follows from condition 3 of the lemma that
\begin{equation}\label{Mdeltaest}
M_\delta\le c^\delta \E \bigl(\int_0^t q(X_s^{x,y}, Y^{x,y}_s) ds\bigr)^\delta.
\end{equation}

Further, inequality \eqref{TV_bound_one_delta} from Theorem~\eqref{L:e2e4} and condition 2 of the lemma imply
\begin{align*}
d_{TV}\bigl(\Law (Y_t^{x,y}), P_t(y,\cdot)\bigr)&=
d_{TV}\bigl(\Law (\Phi(\xi_{[0,t]})), \Law(\Phi(W_{[0,t]}))\bigr)\\
&\le d_{TV}\bigl(\Law (\xi_{[0,t]}), \Law(W_{[0,t]})\bigr)\\
&\le  2^{(1-\delta)/(1+\delta)}(M_{\delta})^{1/(1+\delta)}.
\end{align*}
Combining this with \eqref{Mdeltaest}, we obtain \eqref{dtvbound}.

Similarly,  inequality \eqref{TV_bound_two_delta} from Theorem~\eqref{L:e2e4}, condition 2 of the lemma  and \eqref {Mdeltaest} yield  \eqref{dtvbound2}.
\end{proof}

Now let us present the main result of this section. It shows that Assumptions  \textbf{H1}--\textbf{H3} together with the existence of a suitable   Lyapunov function imply exponential ergodicity.

\begin{Theorem}\label{T:lyapSPDE}
Suppose that the Markov kernel $(P_t)$ is Feller and satisfies Assumptions  \textbf{H1}--\textbf{H3} for some functions $U$, $S$, $q$. Assume further that there exists a measurable function $V\colon E\to\R_+$ such that for some $\gamma>0$, $K>0$ we have
\begin{equation}\label{Llyap}
\E_x V(X_t)\le V(x)-\gamma\E_x\int_0^t V(X_s)\, ds +Kt,\quad t\ge0,\,x\in E
\end{equation}
and  for any $M>0$ the functions $U(\cdot)$, $q(\cdot,\cdot)$ are bounded on the level sets $\{V\le M\}$ and  $\{V\le M\}\times\{V\le M\}$, respectively.

If additionally $\rho\le  q^\delta$ for some $\delta>0$, then $(P_t)$ has a unique invariant measure $\pi$. Moreover,  there exist constants $C,r>0$ such that
\begin{equation*}
W_{\rho\wedge1}(P_t(x,\cdot),\pi) \le C (1+V(x))e^{-rt },\quad t\ge0,\,x\in E.
\end{equation*}
\end{Theorem}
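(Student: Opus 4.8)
The plan is to derive Theorem~\ref{T:lyapSPDE} from Theorem~\ref{T:Lyap2} by verifying Assumptions \textbf{\ref{A:1}} and \textbf{\ref{A:2}} from the abstract hypotheses \textbf{H1}--\textbf{H3}, choosing the premetric $\theta$ so that the unbounded ``extra factor'' $\exp(\kappa\int_0^t S(X_s)\,ds)$ in the dissipativity bound \eqref{diss_abstract} gets absorbed. The natural candidate, mimicking the construction sketched for 2D Navier--Stokes, is something like
\begin{equation*}
\theta(x,y):=\Bigl(q(x,y)\wedge q(y,x)\wedge 1\Bigr)\exp\bigl(\eta\, U(x)\bigr)
\end{equation*}
for a small parameter $\eta>0$ to be tuned against the gap \eqref{condtheta}, or a variant in which $U$ enters only through a truncation; one must check $\theta$ is indeed lower semicontinuous and vanishes only on the diagonal, using lower semicontinuity of $q$ and $U$. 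First I would record the exponential-moment estimate: from the energy estimate \textbf{H2} together with the martingale bound \eqref{martingale_abstract} and the strict inequality $\zeta>\kappa b/\mu$, an exponential martingale / Gronwall argument gives a uniform-in-$t$ bound of the form $\E\exp(c\int_0^t S(X_s^{x,y})\,ds)\le C e^{\eta U(x)}$ for suitable small $c,\eta>0$ (this is exactly the ``energy absorbs the extra term'' step, cf.\ \eqref{dissipativity_NS}--\eqref{QV_NS}). The quantitative balance here — matching the constant $\kappa$ in \textbf{H1} against $\mu,b,b_1,b_2$ in \textbf{H2} so that the exponent stays negative after Cauchy--Schwarz — is the main obstacle and the place where the condition \eqref{condtheta} is used in an essential way.

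Second, I would verify Assumption \textbf{\ref{A:1}}. Property \textbf{\ref{A:1}}.2 (contraction of $\E\theta(X_t^{x,y},Y_t^{x,y})$ at rate $r(t)\to0$) follows by combining \textbf{H1} with the exponential-moment bound via H\"older's inequality: $\E[q(X_t,Y_t)\exp(\eta U(X_t))]\le q(x,y)e^{-\zeta t}\,\E[\exp((\kappa+\eta')\int_0^t S\,ds+\eta'' U(X_t))]$, and the uniform bound on the latter expectation, times $e^{\eta U(x)}$, gives $\le r(t)\theta(x,y)$ with $r(t)=Ce^{-\zeta' t}$ after possibly shrinking $\zeta$. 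Property \textbf{\ref{A:1}}.1 (the total-variation error-in-law bound $\le L(t)\theta(x,y)$) comes from \eqref{dtvbound}: bound $\E(\int_0^t q(X_s,Y_s)\,ds)^\delta$ using \textbf{H1} and the exponential moments — each term $q(X_s,Y_s)^\delta\le q(x,y)^\delta e^{-\delta\zeta s}e^{\delta\kappa\int_0^s S}$, integrate in $s$, take expectation, Cauchy--Schwarz — to get $d_{TV}\le C_\delta q(x,y)^{\delta/(1+\delta)}e^{\eta' U(x)}$; choosing $\delta$ so that $\delta/(1+\delta)$ and the powers of $e^{U}$ fit inside $\theta$ yields the required $L(t)\theta(x,y)$ with $L$ locally bounded. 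The non-symmetry of $\theta$ (it is $q$-based, with $q$ itself possibly non-symmetric, hence the $q(x,y)\wedge q(y,x)$) is handled exactly as in the proof of Theorem~\ref{L:contr}(i), which already allows a premetric.

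Third, I would verify Assumption \textbf{\ref{A:2}} on the Lyapunov level set $B=\{V\le 4K/\gamma\}$: on $B$ the functions $U$ and $q$ are bounded by hypothesis, so $\theta(x,y)\le C_B$ for $x,y\in B$; then \textbf{H1} plus the exponential moment give $\E\theta(X_t^{x,y},Y_t^{x,y})\le R(t)$ with $R(t)=C_B e^{-\zeta' t}\to0$, which is \textbf{\ref{A:2}}.2. For \textbf{\ref{A:2}}.1, the bound $\E(\int_0^t q(X_s,Y_s)\,ds)^\delta$ is uniformly bounded on $B\times B$ (again by the same computation, now with $q(x,y)\le C_B$), so the second half of \textbf{H3}, namely \eqref{dtvbound2}, gives $d_{TV}(\Law(Y_t^{x,y}),P_t(y,\cdot))\le 1-\eps$ uniformly on $B$. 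Finally, the Lyapunov inequality \eqref{Llyap} is precisely condition~1 of Proposition~\ref{Prop:21} in the linear (exponential) case $\phi(x)=\gamma x$, and the comparison hypothesis $\rho\le q^\delta$ implies $\rho\wedge1\le C\theta$ for an appropriate renormalization, so condition~2 of Theorem~\ref{T:Lyap} holds; with $t_0$ chosen from the decay of $R$, Theorem~\ref{T:Lyap2} applies and delivers a unique invariant measure $\pi$ together with the bound $W_{\rho\wedge1}(P_t(x,\cdot),\pi)\le C_1(1+\phi(V(x))^\delta)/\phi(H_\phi^{-1}(C_2 t))^\delta$, which for linear $\phi$ simplifies (since $H_\phi^{-1}(s)\sim e^{cs}$) to $C(1+V(x))e^{-rt}$ after adjusting constants and absorbing the $\delta$-power. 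The remaining bookkeeping is to make sure all the auxiliary constants ($\eta$, $\delta$, $\zeta'$, the threshold in \eqref{condtheta}) can be chosen consistently and independently of $x,y$, which the last sentence of Assumption~\textbf{H} guarantees.
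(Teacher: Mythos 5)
Your overall route is the paper's: verify Assumptions \textbf{\ref{A:1}} and \textbf{\ref{A:2}} from \textbf{H1}--\textbf{H3} for a premetric built from $q$ and $e^{U}$ (this is precisely the content of the paper's Lemma~\ref{P:41}) and then invoke Theorem~\ref{T:Lyap2}. But two steps in your execution do not work as written. First, the claimed uniform-in-$t$ bound $\E\exp\bigl(c\int_0^t S(X^{x,y}_s)\,ds\bigr)\le Ce^{\eta U(x)}$ is false: by \eqref{energy_abstract} the integral $\int_0^t S$ grows linearly in $t$ (at rate roughly $b/\mu$), so its exponential moment grows like $e^{cbt/\mu}$. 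The decay must instead be extracted by combining \eqref{diss_abstract} and \eqref{energy_abstract} \emph{pathwise}, so that the factor $e^{-\zeta t}$ from \textbf{H1} beats the growth $e^{\kappa(b+\gamma b_2)t/(\mu-\gamma b_1)}$ coming from \textbf{H2}; this is exactly where \eqref{condtheta} enters, and the residual random term is $\Xi_\gamma:=\sup_{t}(M_t-\gamma\langle M\rangle_t)$, which (via Dambis--Dubins--Schwarz and \eqref{martingale_abstract}) has finite exponential moments only up to order $2\gamma$.

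Second --- and this is the genuine missing idea --- your premetric carries $q$ (truncated at $1$) to the first power, so closing \textbf{\ref{A:1}}.2 forces you to integrate $e^{\upsilon\Xi_\gamma}$ with the \emph{fixed} exponent $\upsilon=\kappa/(\mu-\gamma b_1)$: the coefficient multiplying $\int_0^t S$ is dictated by \textbf{H1} and cannot be shrunk. Since $\gamma$ must be taken small to keep $\chi=\zeta-\kappa(b+\gamma b_2)/(\mu-\gamma b_1)$ positive, the requirement $\upsilon<2\gamma$ is in general violated, and the contraction cannot be closed under \eqref{condtheta} alone when $b_1,b_2>0$; you would in effect need a strictly stronger balance condition involving $b_1,b_2$. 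The paper's device is to raise $q$ to a small power: take $\theta(x,y)=e^{\alpha\upsilon U(x)}q(x,y)^{\alpha}$ with $\alpha\le(\gamma/\upsilon)\wedge\tfrac12\wedge\delta$. Raising the pathwise bound $q(X_t^{x,y},Y_t^{x,y})\le q(x,y)\exp\bigl(-\chi t+\upsilon(U(x)-U(X^{x,y}_t)+\Xi_\gamma)\bigr)$ (the paper's \eqref{qqq}) to the power $\alpha$ shrinks the needed exponential-moment order to $\alpha\upsilon<2\gamma$ while preserving the decay $e^{-\alpha\chi t}$, and simultaneously makes the comparison $\rho\wedge1\le q^{\delta}\wedge1\le q^{\alpha}\wedge1\le\theta$ valid; your truncated, symmetrized $q\wedge q(y,x)\wedge1$ fails this comparison (since $q^{\delta}\wedge1\not\le q\wedge1$ for $q<1$) and also breaks the contraction \textbf{\ref{A:1}}.2 when $q(x,y)>1$, because then $\E\theta(X_t,Y_t)$ can only be bounded by a multiple of $q(x,y)e^{\eta U(x)}$, not of $1\cdot e^{\eta U(x)}$. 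With the untruncated, non-symmetric $\theta=e^{QU(x)}q^{\alpha}$ (truncation and symmetrization are supplied later by $d_N$ in Theorem~\ref{L:contr}), the remainder of your outline --- \textbf{\ref{A:1}}.1 via \eqref{dtvbound}, \textbf{\ref{A:2}} on sets where $U$ and $q$ are bounded via \eqref{dtvbound2}, the Lyapunov condition \eqref{Llyap} as the linear case $\phi(x)=\gamma x$, and the final passage through Theorem~\ref{T:Lyap2} --- matches the paper.
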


The proof of Theorem~\ref{T:lyapSPDE} is based on the following key lemma. It shows that Assumptions  \textbf{\ref{A:1}} and \textbf{\ref{A:2}} follow from Assumptions \textbf{H1}--\textbf{H3}.

\begin{Lemma}\label{P:41} Suppose that Assumptions \textbf{H1}--\textbf{H3} hold. Then there  exists $\alpha_0>0$ that
depends only on $\zeta, \kappa, \mu, b, b_1, b_2, C_\delta, \eps$,  such that for any $\alpha\in (0, \alpha_0]$ there exist
constants $C, Q, \lambda>0$ such that
\begin{itemize}
 \item[\rm{(i)}] Assumption \textbf{\ref{A:1}} holds for the premetric
$$
\theta(x,y)=e^{Q U(x)}q(x,y)^{\alpha},
$$
and the rate functions $L(t):=C$, $r(t)=C\exp(-\lambda t)$.
\item[\rm{(ii)}] For any set $B$ such that $U(\cdot)$ is bounded on $B$, and $q(\cdot, \cdot)$ is bounded on $B\times B$ there exists  $C>0$ such that Assumption \textbf{\ref{A:2}} holds for the same premetric $\theta$, the set $B$ and the rate function $R(t):=C\exp(-\lambda t)$.
\end{itemize}
\end{Lemma}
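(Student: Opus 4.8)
\textbf{Proof strategy for Lemma~\ref{P:41}.}
The plan is to construct the premetric $\theta(x,y)=e^{QU(x)}q(x,y)^\alpha$ and then verify Assumptions~\textbf{\ref{A:1}} and \textbf{\ref{A:2}} directly, using the same generalized couplings $X^{x,y},Y^{x,y}$ supplied by \textbf{H1}--\textbf{H3}. The heart of the matter is to control the quantity $\E\,\theta(X^{x,y}_t,Y^{x,y}_t)=\E\bigl[e^{QU(X^{x,y}_t)}q(X^{x,y}_t,Y^{x,y}_t)^\alpha\bigr]$. By the dissipativity bound \textbf{H1}, $q(X^{x,y}_t,Y^{x,y}_t)^\alpha\le q(x,y)^\alpha\exp\bigl(-\alpha\zeta t+\alpha\kappa\int_0^t S(X^{x,y}_s)\,ds\bigr)$, so after pulling out $q(x,y)^\alpha$ we are left with estimating
\[
\E\Bigl[\exp\Bigl(QU(X^{x,y}_t)+\alpha\kappa\int_0^t S(X^{x,y}_s)\,ds\Bigr)\Bigr].
\]
First I would feed the energy estimate \textbf{H2} into this: $U(X^{x,y}_t)\le U(x)+bt+M_t-\mu\int_0^tS(X^{x,y}_s)\,ds$, so the exponent is bounded by $QU(x)+Qbt+QM_t+(\alpha\kappa-Q\mu)\int_0^tS(X^{x,y}_s)\,ds$. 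Choosing $Q$ so that $\alpha\kappa\le Q\mu$ — compatibly with condition~\eqref{condtheta}, $\zeta>\kappa b/\mu$ — kills the $S$-integral term (it becomes a negative drift we can discard), leaving $\E[e^{QU(x)}e^{Qbt}e^{QM_t}]$. The quadratic variation bound \eqref{martingale_abstract}, $d\langle M\rangle_t\le b_1S\,dt+b_2\,dt$, lets me handle the exponential martingale: writing $e^{QM_t}=\mathcal E_t\exp(\tfrac12 Q^2\langle M\rangle_t)$ with $\mathcal E$ the Doléans exponential, and again absorbing the $b_1\int S$ piece into the negative drift by shrinking $Q$ (or $\alpha$) further, one gets $\E[e^{QM_t}\exp(\text{const}\cdot\int S)]\le e^{Ct}$ by a stochastic-exponential / Novikov-type argument. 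The upshot is $\E\,\theta(X^{x,y}_t,Y^{x,y}_t)\le C e^{QU(x)}q(x,y)^\alpha e^{(Qb-\alpha\zeta+C)t}$, and by taking $\alpha$ small enough (hence $Q$ small) the net exponent $-\lambda:=Qb-\alpha\zeta+C$ can be made negative — this is exactly where $\alpha_0$ enters and where condition~\eqref{condtheta} is essential for the budget to balance. This yields \textbf{\ref{A:1}}.2 with $r(t)=Ce^{-\lambda t}$, and restricted to $x,y\in B$ (with $U,q$ bounded on $B$, $B\times B$) it yields \textbf{\ref{A:2}}.2 with $R(t)=Ce^{-\lambda t}$ after absorbing the constants $e^{QU(x)},q(x,y)^\alpha$.

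For the error-in-law parts I would use \textbf{H3}. Assumption~\textbf{H3} already gives $\Law(X^{x,y})=\P_x$, which is \textbf{\ref{A:1}}.1's and \textbf{\ref{A:2}}.1's first half. For the total-variation bound, \eqref{dtvbound} gives $d_{TV}(\Law(Y^{x,y}_t),P_t(y,\cdot))\le C_\delta\bigl(\E(\int_0^t q(X^{x,y}_s,Y^{x,y}_s)\,ds)^\delta\bigr)^{1/2}$. I would bound the integral using \textbf{H1} as above: $\int_0^t q(X^{x,y}_s,Y^{x,y}_s)\,ds\le q(x,y)\int_0^t e^{-\zeta s+\kappa\int_0^sS}\,ds$, then apply Jensen/Hölder with exponent $\delta$ and the same energy--martingale estimates to get $\E(\cdots)^\delta\le C q(x,y)^\delta$ (or $\le C e^{\delta QU(x)}q(x,y)^{\delta}$ after inserting the weight), so the right side is $\le C\,e^{QU(x)/2}q(x,y)^{\delta/2}$; choosing the free parameter $\delta$ so that $\delta/2$ matches the exponent $\alpha$ in $\theta$ — i.e.\ taking $\delta=2\alpha$, legitimate since $\alpha\le\alpha_0$ can be taken $\le 1/2$ — and noting $e^{QU(x)/2}\le e^{QU(x)}$ gives $d_{TV}\le C\,\theta(x,y)$, hence \textbf{\ref{A:1}}.1 with $L(t)\equiv C$. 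For \textbf{\ref{A:2}}.1, on the set $B$ the quantity $\E(\int_0^t q\,ds)^\delta$ is bounded by a constant $R$ uniformly in $x,y\in B$ and $t$ (again by \textbf{H1}--\textbf{H2} with $U,q$ bounded on $B$), so the second clause of \textbf{H3}, \eqref{dtvbound2}, applies with that $R$ and yields $d_{TV}(\Law(Y^{x,y}_t),P_t(y,\cdot))\le 1-\eps$ uniformly, which is \eqref{TV_B}.

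The main obstacle I anticipate is the exponential-moment bookkeeping: one must choose the triple of parameters $(\alpha,Q,\delta)$ so that simultaneously (a) the coefficient of $\int_0^tS(X^{x,y}_s)\,ds$ in the combined exponent of \textbf{H1}+\textbf{H2} is $\le 0$, (b) the extra $\tfrac12Q^2 b_1\int S$ produced by the exponential martingale correction is still dominated, (c) the residual deterministic rate $Qb-\alpha\zeta$ plus the martingale contribution is strictly negative, and (d) $\delta=2\alpha\le 1$. Condition~\eqref{condtheta}, $\zeta\mu>\kappa b$, is precisely what makes such a choice possible once $\alpha$ (and with it $Q$) is small; making this quantitative — i.e.\ exhibiting $\alpha_0$ depending only on $\zeta,\kappa,\mu,b,b_1,b_2,C_\delta,\eps$ — is the only genuinely delicate step, and it is a routine but careful optimization once the exponential-martingale estimate (e.g.\ via $\E\exp(\lambda M_t-\tfrac{\lambda^2}2\langle M\rangle_t)\le 1$ combined with Young's inequality to split the two exponentials) is set up. Everything else is assembling \textbf{H1}--\textbf{H3} mechanically. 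I would also remark that the weight $e^{QU(x)}$ is what forces $\theta$ to be non-symmetric, matching the comment after \eqref{dissipativity_NS}, and that lower semicontinuity of $\theta$ follows from lower semicontinuity of $U$ and of $q$.
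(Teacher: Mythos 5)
Your overall plan is the same as the paper's: the same weighted premetric $e^{QU(x)}q(x,y)^\alpha$, the same combination of \textbf{H1} and \textbf{H2} to absorb $\int_0^t S(X^{x,y}_s)\,ds$, the same use of \textbf{H3} with the choice $\delta=2\alpha$, and the same reduction of part (ii) to boundedness of $U,q$ on $B$ plus \eqref{dtvbound2}. Your verification of \textbf{\ref{A:1}}.2 via the Dol\'eans exponential is fine: there every coefficient is $O(\alpha)$, so the quadratic correction $\tfrac12 Q^2 b_1\int S$ and $\tfrac12Q^2b_2 t$ are genuinely lower order and the budget closes under \eqref{condtheta} for small $\alpha$ (your ``$+C$'' in the rate must of course be the $O(\alpha^2)$ martingale correction, not a fixed constant, but you acknowledge this).

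The genuine gap is in your treatment of $\E\bigl(\int_0^t q(X^{x,y}_s,Y^{x,y}_s)\,ds\bigr)^\delta$ for \textbf{\ref{A:1}}.1 and \textbf{\ref{A:2}}.1. Inside the time integral $q(X^{x,y}_s,Y^{x,y}_s)$ carries the \emph{full-strength} exponent $\kappa\int_0^s S$ from \eqref{diss_abstract}; the mitigating power $\delta$ sits outside the $ds$-integral. If, as you propose, you first apply Jensen ($\E X^\delta\le(\E X)^\delta$) and then the fixed-time exponential-martingale estimate, you need $\E\exp\bigl(\kappa\int_0^s S\bigr)<\infty$, and after inserting \eqref{energy_abstract} the absorption condition becomes $\tfrac12\beta^2 b_1-\beta\mu+\kappa\le0$ for some $\beta$, i.e.\ essentially $\mu^2\gtrsim 2\kappa b_1$ --- a smallness relation that is \emph{not} implied by \textbf{H1}--\textbf{H3} or \eqref{condtheta} ($b_1$ is unconstrained), and indeed one can cook up $M$, $S$ satisfying \eqref{energy_abstract}--\eqref{martingale_abstract} for which this exponential moment is infinite while $\E(\int_0^t q\,ds)^\delta$ is finite for small $\delta$. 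The missing idea, which is exactly the paper's device, is to keep the small power pathwise: introduce $\Xi_\gamma:=\sup_{t\ge0}(M_t-\gamma\langle M\rangle_t)$, which by Dambis--Dubins--Schwarz satisfies $\P(\Xi_\gamma\ge R)=e^{-2\gamma R}$, hence has exponential moments of small order; then \textbf{H1}+\textbf{H2} give the uniform-in-$s$ bound $q(X^{x,y}_s,Y^{x,y}_s)\le q(x,y)\,e^{\upsilon(U(x)+\Xi_\gamma)}e^{-\chi s}$ with $\upsilon=\kappa/(\mu-\gamma b_1)$, so the $ds$-integral is bounded by a single random variable, the $\delta$-power produces the harmless exponent $\delta\upsilon\Xi_\gamma$ with $\delta\upsilon\le2\gamma$, and this is also what makes $\alpha_0=(\gamma/\upsilon)\wedge\tfrac12$ explicit. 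Your fixed-time supermartingale bound does not deliver this uniform-in-time control, so as written the verification of \textbf{\ref{A:1}}.1 and \textbf{\ref{A:2}}.1 does not go through without this (or an equivalent maximal) argument.
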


\begin{proof}[Proof of Lemma~\ref{P:41}]
(i). Take any $x,y\in E$. First of all let us derive a good bound on $q(X_t^{x,y}, Y^{x,y}_t)$, $t\ge0$.
Let $\gamma>0$ be a sufficiently small parameter to be chosen later. Define
$$
\Xi_\gamma:=\sup_{t\ge0}(M_t-\gamma\langle M\rangle_t).
$$
By the Dambis--Dubins--Schwarz theorem (see, e.g., \cite[Theorem~5.1.6]{RY}), there exists (maybe on an extended probability space) a Brownian motion $\wh W$ such that $M_t=\wh W_{\langle M\rangle_t}$, $t\ge0$. Therefore,
$$
\Xi_\gamma\leq \sup_{t\ge0}(\wh W_t-\gamma t)
$$
and
\begin{equation}\label{formulaxigamma}
\P(\Xi_\gamma\ge R)\le\P\bigl(\sup_{t\ge0}(\wh W_t-\gamma t)\ge R\bigr)=e^{-2\gamma R},\quad R\ge0,
\end{equation}
see \cite[Part II, formula 2.0.2.(1)]{Borodin_Salminen}. By \eqref{energy_abstract} and \eqref{martingale_abstract} we have
\begin{equation}\label{uravnN}
U(X^{x,y}_t)+(\mu-\gamma b_1)\int_0^tS(X^{x,y}_s)\, ds\leq U(x)+(b+\gamma b_2)t+\Xi_\gamma.
\end{equation}
From now on we assume that $0<\gamma<\mu/b_1$. Then, combining \eqref{uravnN} with \eqref{diss_abstract}, we obtain
\begin{equation}\label{qqq}
q(X_t^{x,y}, Y^{x,y}_t)\le q(x,y)\exp\Big(-\chi t+\upsilon\bigl(U(x)-
U(X^{x,y}_t)+\Xi_\gamma\bigr)\Big),
\end{equation}
where we denoted
\begin{equation*}
\upsilon=\frac{\kappa}{\mu-\gamma b_1},\quad\chi:=\zeta-\frac{\kappa (b+\gamma b_2)}{\mu-\gamma b_1}.
\end{equation*}
Recall that thanks to \eqref{condtheta} we have $\zeta- \kappa b/\mu>0$. Therefore we can take $\gamma$  small enough so that $\chi>0$.

We begin with verifying \textbf{\ref{A:1}}.1. By \textbf{H3}, we have $\Law(X^{x,y})=\P_x$. To estimate the distance between $\Law(Y^{x,y}_t)$ and $P_t(y,\cdot)$ we also use \textbf{H3} together with 
\eqref{qqq}. We get for any $\delta\in(0,1)$
\begin{align}\label{qqqq}
d_{TV}(\Law(Y^{x,y}_t),P_t(y,\cdot))
&\le C_\delta \Bigl(\E \bigl(\int_0^t  q(X_s^{x,y}, Y^{x,y}_s)\, \dd s\bigr)^\delta\Bigr)^{1/2}\nn\\
&\le C q(x,y)^{\delta/2}e^{\upsilon\delta U(x)/2}\E\exp\bigl(\frac{\upsilon\delta}{2}\Xi_\gamma\bigr).
\end{align}
Define now for $\alpha\in(0,1)$
\begin{equation}\label{rhodef}
\theta_\alpha(z_1,z_2):= q(z_1,z_2)^{\alpha} e^{\alpha \upsilon U(z_1)},\quad z_1,z_2\in E.
\end{equation}

Recall that \eqref{formulaxigamma} implies that ${\E \exp(K \Xi_\gamma)<\infty}$ for $K<2 \gamma$. Thus, by taking
\begin{equation}\label{alphazero}
\alpha_0:=(\gamma/\upsilon)\wedge(1/2),
\end{equation}
we see that for any $\alpha\in(0,\alpha_0]$ inequality \eqref{qqqq} implies
\begin{equation*}
d_{TV}(\Law(Y^{x,y}_t),P_t(y,\cdot))\le C \theta_{\alpha}(x,y)
\end{equation*}
and therefore Assumption \textbf{\ref{A:1}}.1 holds for the premetric $\theta_\alpha$, $\alpha\in(0,\alpha_0]$.

To check \textbf{\ref{A:1}}.2 we employ again \eqref{qqq}.
Using the definition of the premetric $\theta_\alpha$ in \eqref{rhodef},
we derive
\begin{align}\label{A12perv}
\E \theta_\alpha(X_t^{x,y},Y_t^{x,y})&=  \E q(X_t^{x,y},Y_t^{x,y})^{\alpha}
e^{\alpha \upsilon U(X_t^{x,y})}\nn\\
&\le \E e^{-\lambda t}q(x,y)^{\alpha}e^{\alpha\upsilon U(x)}e^{\alpha \upsilon \Xi_\gamma}\nn\\
&\le C  e^{-\lambda t}\theta_\alpha(x,y),
\end{align}
where we denoted $\lambda=\alpha \chi$. Thus Assumption \textbf{\ref{A:1}}.2 also holds for the premetric $\theta_\alpha$, $\alpha\in[0,\alpha_0]$.

(ii). We will use the same notation as in the part (i) of the proof. Fix $\alpha\in(0,\alpha_0]$ and take any $x,y\in B$. First let us verify \textbf{\ref{A:2}}.1. Clearly, $\Law(X^{x,y})=\P_x$ thanks to \textbf{H3}. Fix now $t>0$. To estimate the total variation distance between $\Law(Y_t^{x,y})$ and $P_t(y,\cdot)$ denote
\begin{equation*}
M_\alpha:=\E\left(\int_0^t  q(X_s^{x,y}, Y^{x,y}_s)\, \dd s\right)^\alpha.
\end{equation*}
Inequality \eqref{qqq} and the boundedness of $q$ and $U$ on the set $B$ imply
\begin{equation*}
M_\alpha\le C \E \exp(\upsilon \alpha \Xi_{\gamma})\le C_1,
\end{equation*}
where we also used \eqref{alphazero} and the fact that $\alpha\le \alpha_0$. Now condition \eqref{dtvbound2} from \textbf{H3}  implies
\begin{equation*}
d_{TV}(\Law(Y^{x,y}_t),P_t(y,\cdot))\le 1-\eps
\end{equation*}
for some $\eps>0$. This yields \eqref{TV_B}.

To verify \textbf{\ref{A:2}}.2 we use \eqref{A12perv} and the definition of the metric $\theta_\alpha$, which is \eqref{rhodef}. Using again the fact that $q$ and $U$ are bounded on $B$, we get
\begin{equation*}
\E \theta_\alpha(X_t^{x,y},Y_t^{x,y})\le  C e^{-\lambda t},
\end{equation*}
for some $C=C(B)>0$. Thus, \textbf{\ref{A:2}}.2 holds for the metric $\theta_\alpha$.
\end{proof}

Now we can present a proof of Theorem~\ref{T:lyapSPDE}.
\begin{proof}[Proof of Theorem~\ref{T:lyapSPDE}]
The theorem follows from Lemma~\ref{P:41} and Theorem~\ref{T:Lyap2}.

Recall the definition of $\alpha_0$ from Lemma~\ref{P:41}. Take $\alpha:=\alpha_0\wedge\delta$. Then by  Lemma~\ref{P:41} there exists $Q>0$ such that  Assumptions~\textbf{\ref{A:1}} {\blue and \textbf{\ref{A:2}}} holds for the premetric $\theta(x,y):=e^{QU(x)}q(x,y)^\alpha$.

Let us now check that all the conditions of Theorem~\ref{T:Lyap2} are satisfied for this premetric $\theta$. The first condition of Theorem~\ref{T:Lyap2} is satisfied due to \eqref{Llyap}. Since, by assumption,  $\rho\le  q^\delta$ and ${\blue U}(x)\ge0$ for any $x\in E$, we have
$$
\rho(x,y)\wedge 1\le q(x,y)^\delta{\blue\wedge 1}\le q(x,y)^\alpha{\blue\wedge 1}\le \theta(x,y),\quad x,y\in E,
$$
and thus the second condition of Theorem~\ref{T:Lyap2} holds.

The third condition of  Theorem~\ref{T:Lyap2} holds by Lemma~\ref{P:41}(i).

Finally, since $U$ and $q$ are bounded on the level sets of $V$, we see that  Lemma~\ref{P:41}(ii) implies that the fourth condition  of  Theorem~\ref{T:Lyap2} is also satisfied. Now the statement of the theorem follows immediately from Theorem~\ref{T:Lyap2}.
\end{proof}

\subsection{2D  Navier-Stokes Equation, II: exponential ergodicity}

Now with the general toolkit in hand we can proceed directly and establish exponential ergodicity of the 2D stochastic Navier--Stokes equation. Note that exponential ergodic bounds had been obtained previously for the Navier-Stokes equation only on a 2-dimensional torus, see \cite{Mat2002} and  \cite{HM08}. Unlike these results, Theorem \ref{tNS} does not rely  on the geometry of the domain ${D}$, or on the fine structure of the forcing. This illustrates well,  we believe, that the generalized coupling method is quite insensitive with respect to the local structure of the model.

%

Recall that $\lambda_i$ is the $i$th largest eigenvalue of the Stokes operator $A=-P_H\Delta$.

\begin{Theorem}\label{tNS}
Assume that there exists $N\in\Z_+$ such that
$$
P_NH\subset\Span (\bsigma_k,\, k=1, \dots, m)
$$
and
\begin{equation}\label{granica}
\lambda_{N+1}>{\blue 4\nu^{-4}\|A^{-1/2}\mathbf{f}\|^2_H+4\nu^{-3}\|\sigma\|^2_H.}
\end{equation}
Then the stochastic Navier--Stokes equation \eqref{SNS} has a unique invariant measure $\pi$. Further,  there exist constants $C>0$, $r>0$ such that
\begin{equation*}
W_{\|\cdot\|_H\wedge1}(\Law (\u^\bx(t)),\pi) \le C (1+\|\bx\|_H^2)e^{-rt },\quad t\ge0,\,\bx\in H.
\end{equation*}
\end{Theorem}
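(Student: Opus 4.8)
The plan is to obtain Theorem~\ref{tNS} directly from the abstract Theorem~\ref{T:lyapSPDE}, so that the task reduces to verifying Assumptions~\textbf{H1}--\textbf{H3} together with a Lyapunov condition and the minor structural requirements of that theorem. We work with $E=H$, $\rho(\bx,\by)=\|\bx-\by\|_H$, and the (Feller) Markov semigroup $(P_t)$ of \eqref{SNS}. For the generalized coupling we take $\bX^{\bx,\by}=\u^\bx$ and let $\bY^{\bx,\by}$ solve the controlled equation \eqref{2dNS1a} with the $N$ supplied by the hypothesis, and we choose the abstract ingredients
\begin{equation*}
q(\bx,\by):=\|\bx-\by\|_H^2,\qquad U(\bx):=\|\bx\|_H^2,\qquad S(\bx):=\|\bx\|_V^2,\qquad V(\bx):=\|\bx\|_H^2 .
\end{equation*}
With these identifications the conditions of Assumption~\textbf{H} and the Lyapunov inequality are exactly the estimates already assembled in Section~\ref{SS:TDNS}, and what remains is bookkeeping of constants.

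Concretely, Assumption~\textbf{H1} is the dissipativity bound \eqref{dissipativity_NS}, valid with $\zeta=\nu\lambda_{N+1}$ and $\kappa=4/\nu$. Assumption~\textbf{H2} consists of the energy estimate \eqref{energy_NS} together with the quadratic-variation bound \eqref{QV_NS}, so $\mu=\nu$, $b=\nu^{-1}\|A^{-1/2}\mathbf{f}\|_H^2+\|\sigma\|_H^2$, $b_1=C\|\sigma\|_H^2$, $b_2=0$; the compatibility requirement \eqref{condtheta}, i.e.\ $\zeta>\kappa b/\mu$, reads $\nu\lambda_{N+1}>4\nu^{-2}\bigl(\nu^{-1}\|A^{-1/2}\mathbf{f}\|_H^2+\|\sigma\|_H^2\bigr)$, and after dividing by $\nu$ this is precisely the spectral-gap hypothesis \eqref{granica}. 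That single inequality is the only place where the standing assumption of the theorem is used. For Assumption~\textbf{H3} we apply Lemma~\ref{L:pomoshH3}: by construction $\Law(\bX^{\bx,\by})=\P_\bx$; rewriting \eqref{2dNS1a} in the form \eqref{newWdr} exhibits $\bY^{\bx,\by}(t)=\Phi_t^\by(W^{\bx,\by}_{[0,t]})$ with $\Law(\Phi_t^\by(W_{[0,t]}))=P_t(\by,\cdot)$ and $dW^{\bx,\by}_s=dW_s+\beta^{\bx,\by}_s\,ds$, while \eqref{beta_XY} gives $\|\beta^{\bx,\by}(s)\|^2\le C\|\bX^{\bx,\by}(s)-\bY^{\bx,\by}(s)\|_H^2=Cq\bigl(\bX^{\bx,\by}(s),\bY^{\bx,\by}(s)\bigr)$; thus all hypotheses of Lemma~\ref{L:pomoshH3} hold, which yields \textbf{H3} (equivalently one quotes \eqref{TV_bound_solutions} directly).

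Finally we supply the Lyapunov function $V(\bx)=\|\bx\|_H^2$. Taking expectations in \eqref{energy_NS}, after the usual localization of the local martingale, and using the Poincar\'e inequality $\|\u(s)\|_V^2\ge\lambda_1\|\u(s)\|_H^2$, yields $\E_\bx V(\u^\bx(t))\le V(\bx)-\nu\lambda_1\E_\bx\int_0^t V(\u^\bx(s))\,ds+bt$, which is \eqref{Llyap} with $\gamma=\nu\lambda_1$, $K=b$. Since $U=V=\|\cdot\|_H^2$ and $q(\bx,\by)\le 2\|\bx\|_H^2+2\|\by\|_H^2$, both $U$ and $q$ are bounded on the level sets $\{V\le M\}$ and $\{V\le M\}\times\{V\le M\}$; and $\rho=q^{1/2}$, so $\rho\le q^\delta$ with $\delta=1/2$. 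All hypotheses of Theorem~\ref{T:lyapSPDE} being satisfied, it furnishes a unique invariant measure $\pi$ and the estimate $W_{\|\cdot\|_H\wedge1}(\Law(\u^\bx(t)),\pi)\le C(1+\|\bx\|_H^2)e^{-rt}$, which is the assertion of Theorem~\ref{tNS}.

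The genuine difficulty does not lie in this final step --- by here the analysis of Section~\ref{SS:TDNS} has done the work --- but in establishing the dissipativity bound \eqref{dissipativity_NS} that drives \textbf{H1}. There the non-Lipschitz nonlinearity $(\u\cdot\nabla)\u$ must be split by the Ladyzhenskaia-type estimate \eqref{SobolevNS} so that, in the Gronwall argument for $\|\bX^{\bx,\by}-\bY^{\bx,\by}\|_H^2$, the strong $\|\cdot\|_V$-norm appears only on $\bX^{\bx,\by}$ and not on the difference itself; this is exactly what lets the energy estimate \eqref{energy_NS} absorb the resulting exponential factor, provided the spectral gap \eqref{granica} holds. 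The remaining points --- localization of the local martingales and the H\"older/exponent bookkeeping inside Lemma~\ref{L:pomoshH3} --- are routine.
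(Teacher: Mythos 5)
Your proposal is correct and follows essentially the same route as the paper: it reduces Theorem~\ref{tNS} to Theorem~\ref{T:lyapSPDE} with the identifications $U(\bx)=V(\bx)=\|\bx\|_H^2$, $S(\bx)=\|\bx\|_V^2$, $q(\bx,\by)=\|\bx-\by\|_H^2$, verifies \textbf{H1}--\textbf{H2} via \eqref{dissipativity_NS}, \eqref{energy_NS}, \eqref{QV_NS} with the same constants $\zeta=\nu\lambda_{N+1}$, $\kappa=4/\nu$, $\mu=\nu$, $b=\nu^{-1}\|A^{-1/2}\mathbf{f}\|_H^2+\|\sigma\|_H^2$ (so that \eqref{condtheta} is exactly \eqref{granica}), and \textbf{H3} via Lemma~\ref{L:pomoshH3} and \eqref{beta_XY}, just as in the paper's proof. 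The constant bookkeeping and the Lyapunov/Poincar\'e step are all accurate, so there is nothing to add.
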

\begin{proof}
Let us check that all the conditions of Theorem~\ref{T:lyapSPDE} are satisfied. Motivated by the preliminary analysis in Section~\ref{SS:TDNS}, let us take
$$
U(\bx):=\|\bx\|_H^2, \quad S(\bx):=\|\bx\|_V^2, \quad q(\bx,\by):=\|\bx-\by\|_H^2, \quad \bx,\by\in H.
$$
Then it follows from \eqref{dissipativity_NS} that Assumption \textbf{H1} is satisfied with $\zeta:=\nu\lambda_{N+1}$ and $\kappa:={\blue 4}/\nu$.

Further, we see from bounds \eqref{energy_NS} and \eqref{QV_NS} that  \textbf{H2} also holds with $\mu:=\nu$ and
$b:=\|A^{-1/2}\mathbf{f}\|^2_H/\nu+\|\sigma\|^2_H$. Constraint \eqref{condtheta} follows from \eqref{granica}.

To verify \textbf{H3} we employ Lemma~\ref{L:pomoshH3}. As discussed in Section~\ref{SS:TDNS}, for any $t\ge0$ there exists some measurable mapping $\Phi$ such that $\bY^{\bx,\by}(t)=\Phi_t^\by(W^{\bx,\by}_{[0,t]})$ and $\u^\by=\Phi_t^\by(W_{[0,t]})$. Inequality
\eqref{beta_XY} implies that for some $C>0$
$$
|\beta_s|^2\le C q(\bX^{\bx,\by}(s),\bY^{\bx,\by}(s)),\quad s\in[0,t].
$$
Hence, by Lemma~\ref{L:pomoshH3} condition  \textbf{H3} also holds.

Finally, we introduce the following Lyapunov function: $V(\bx):=\|\bx\|_H^2$. By definition, for any $M>0$ the functions $U$ and $q$ are bounded on the level sets $\{V\le M\}$ and $\{V\le M\}\times\{V\le M\}$, respectively. Inequality
\eqref{Llyap} follows from the energy estimate \eqref{energy_NS} {\blue and the Poincar\'e inequality $\|u\|_V\ge C\|u\|_H$}.

Since $\rho:=\|\bx-\by\|_H\le q(\bx,\by)^{1/2}$, we see that all conditions of Theorem~\ref{T:lyapSPDE} hold. This immediately implies the statement of the theorem.
\end{proof}

\subsection{2D Hydrostatic Navier-Stokes Equation}
In this section we treat the stochastic hydrostatic Navier-Stokes equations. Fix $L,h>0$ and consider a domain $D\subset\R^2$ defined by $D:=\{(z_1,z_2)\colon z_1\in(0,L),\,z_2\in(-h,0)\}$. Introduce the lateral side of the boundary $\Gamma_l:=\{0,L\}\times[-h,0]$ and the horizontal side $\Gamma_h:=[0,L]\times\{-h,0\}$. Consider the following equations for an unknown velocity field $(u,w)$ and pressure $p$ on $D$
\begin{align}
&du+  (u \partial_{z_1} u + w \partial_{z_2} u) dt=(\nu\Delta u -\partial_{z_1} p)dt+  \sum_{k=1}^m \sigma_k dW^k,\label{2dhns}\\
& \partial_{z_2} p = 0,\,\,\partial_{z_1} u + \partial_{z_2} w  = 0,\,\,u(0)=x,\,\,\nn\\
&  u\evalat{ \Gamma_l}\! = 0,\,\,\d_{z_2}u\evalat{ \Gamma_h}\! =w\evalat{ \Gamma_h}\!=0.\label{bc2dhns}
\end{align}
where $W=(W^1,W^2,\dots,W^m)$ is a standard $m$--dimensional Brownian motion,  $\nu>0$. We assume that for any $k=1,\hdots, m$ we have  $\sigma_k\in H^2(D)$, ${\sigma_k}\evalat{ \Gamma_l}\! = 0$, $\d_{z_2} {\sigma_k}\evalat{ \Gamma_h}\!=0$, $\int_{-h}^0 \sigma_k dz_2\equiv 0$.
Denote
\begin{align*}
&H:=\{\phi\in L_2(D)\colon \int_{-h}^0\phi(z_1,z_2)\,dz_2=0\, \text{ for all $z_1\in(0,L)$}\},\\
&V:=\{\phi\in H^1(D)\colon \int_{-h}^0\phi(z_1,z_2)\,dz_2=0\, \text{ for all $z_1\in(0,L)$  and  $\phi\evalat{\Gamma_l}=0$}\}.
\end{align*}
Let $\|\cdot\|_H$ be the standard $L_2(\D)$-norm and for $\phi\in V$ put  $\|\phi\|_V^2:=\|\d_{z_1} \phi\|_H^2+\|\d_{z_2} \phi\|_H^2$. We consider the following complete orthonormal basis of $H$:
$$
\bigl\{e_{i,j}:=\frac{2}{\sqrt{hL}}\sin(\frac{i\pi z_1}L) \cos(\frac{j\pi z_2}h)\bigr\}_{i,j\ge1},
$$
which corresponds to eigenvectors of the negative Laplacian operator with boundary conditions given by \eqref{bc2dhns}. Their associated eigenvalues are
$$
\bigl\{\lambda_{i,j}:=\pi^2(i^2/L^2+j^2/h^2)\bigr\}_{i,j\ge1}.
$$
It is known (\cite[Section~3.2.1]{GMR17}, see also \cite[Theorems~1.3 and 1.5]{GKVZ14}) that under the above assumptions on $\sigma$, for any $x\in V$ stochastic hydrostatic Navier-Stokes equation \eqref{2dhns} has a unique strong solution, which in the case of ambiguity will be further denoted by $u^x$. Moreover, $u$ is a Feller Markov process with the state space $V$ and for Lebesgue-almost all $t>0$ the random element {\blue$u^x(t)\in \{\phi\in H^2(D):\d_{z_2}\phi\evalat{ \Gamma_h}=0\}$}. The existence and uniqueness of the invariant measure of $u$ was shown in \cite[Section~3.2]{GMR17}. The weak convergence of transition probabilities to the invariant measure was established in \cite[Section~6.2.2]{KS17}. We strengthen these results and prove exponential ergodicity of $u$.

\begin{Theorem} \label{T:2dhns}
Assume that
\begin{equation}\label{nondeg2dhns}
\Span (e_{i,j}:\,\lambda_{i,j}\le K )\subset\Span (\sigma_k,\, k=1, \dots, m),
\end{equation}
where
\begin{equation}\label{defK2dhns}
K:=4(1+h)\nu^{-3}(1+{\blue\frac{\sqrt{hL}}{\sqrt2\pi}})(\|\sigma\|_H^2+\|\d_{z_2} \sigma\|_H^2).
\end{equation}
Then the stochastic hydrostatic Navier-Stokes equation \eqref{2dhns} has a unique invariant measure $\pi$. Further, there exist constants $C>0$, $r>0$ such that
\begin{equation*}
W_{\|\cdot\|_H\wedge1}(\Law (u^x(t)),\pi) \le C (1+\|x\|_H^2+\|\d_{z_2} x\|_H^2)e^{-rt },\quad t\ge0,\, x\in V.
\end{equation*}

\end{Theorem}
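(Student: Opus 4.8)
The plan is to verify the hypotheses of Theorem~\ref{T:lyapSPDE} for the stochastic hydrostatic Navier--Stokes equation, exactly mimicking the structure of the proof of Theorem~\ref{tNS}. The state space is $E:=V$ with $\rho(x,y):=\|x-y\|_H\wedge1$, and, guided by the basic calculations for the 2D Navier--Stokes equation, I would set
\begin{equation*}
U(x):=\|x\|_H^2+\|\d_{z_2}x\|_H^2,\qquad S(x):=\|x\|_V^2+\|\d_{z_2}x\|_V^2,\qquad q(x,y):=\|x-y\|_H^2,
\end{equation*}
(or some close variant of $U$ and $S$ involving the $\d_{z_2}$-derivatives, which is forced by the anisotropic structure of the hydrostatic equations). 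The generalized coupling $Y^{x,y}$ is defined as in \cite[Section~3.2.2]{GMR17} and \cite[Section~6.2.2]{KS17}: one solves \eqref{2dhns} with initial condition $y$ and the added control $\frac{\nu K'}2 P(X^{x,y}(t)-Y^{x,y}(t))$ projecting onto the low modes spanned by $\{e_{i,j}:\lambda_{i,j}\le K\}$, where $K$ is as in \eqref{defK2dhns}. Rewriting this controlled equation as \eqref{2dhns} driven by a shifted Brownian motion $dW^{x,y}=dW+\beta^{x,y}dt$ with $|\beta_s^{x,y}|^2\le C\,q(X_s^{x,y},Y_s^{x,y})$ (using that the pseudo-inverse of $\sigma$ on the low-mode span is bounded), Assumption~\textbf{H3} follows from Lemma~\ref{L:pomoshH3} verbatim.

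The core of the proof is to establish Assumptions~\textbf{H1} and~\textbf{H2}. For \textbf{H1}, I would subtract the equations for $X^{x,y}$ and $Y^{x,y}$, test against the difference, and use the hydrostatic analogue of the Ladyzhenskaia/Foias--Prodi trick for the trilinear form $\int(v\partial_{z_1}u+\tilde w\partial_{z_2}u)\cdot v$; this is carried out in \cite[Section~3.2.2]{GMR17}. The outcome is a dissipativity bound of the form $q(X_t^{x,y},Y_t^{x,y})\le q(x,y)\exp(-\zeta t+\kappa\int_0^t S(X_s^{x,y})\,ds)$ with $\zeta\sim \nu K$ and $\kappa\sim C/\nu$. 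For \textbf{H2}, I would apply the It\^o formula to $\|u\|_H^2$ and to $\|\d_{z_2}u\|_H^2$ separately (the latter using that $\d_{z_2}$ commutes appropriately with the boundary conditions in \eqref{bc2dhns} and that $\int_{-h}^0\sigma_k\,dz_2\equiv0$), add them, apply the Poincar\'e and Cauchy inequalities to absorb the forcing, and read off $\mu$, $b$, $b_1$, $b_2$; the sharp constants here are precisely what forces the explicit threshold \eqref{defK2dhns}, and the balance condition \eqref{condtheta}, $\zeta>\kappa b/\mu$, becomes exactly the requirement that $K$ exceeds the right-hand side of \eqref{defK2dhns}, which holds by \eqref{nondeg2dhns} and the choice of $K$.

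Having verified \textbf{H1}--\textbf{H3}, it remains to supply the Lyapunov function: I would take $V(x):=\|x\|_H^2+\|\d_{z_2}x\|_H^2$ (matching $U$), so that the energy estimate from the \textbf{H2} computation together with the Poincar\'e inequality $\|u\|_V\ge C\|u\|_H$ (and its $\d_{z_2}$-analogue) yields \eqref{Llyap} for suitable $\gamma,K>0$. By construction $U$ and $q$ are bounded on the level sets $\{V\le M\}$ and $\{V\le M\}\times\{V\le M\}$, and $\rho\le q^{1/2}$, so all the hypotheses of Theorem~\ref{T:lyapSPDE} are in place; its conclusion gives unique ergodicity and the stated exponential rate $W_{\|\cdot\|_H\wedge1}(\Law(u^x(t)),\pi)\le C(1+\|x\|_H^2+\|\d_{z_2}x\|_H^2)e^{-rt}$.

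The main obstacle is the derivation of the dissipativity bound \textbf{H1} and the matched energy estimate \textbf{H2} with the \emph{correct constants}: the hydrostatic trilinear term $w\partial_{z_2}u$ with $w=-\int_{-h}^{z_2}\partial_{z_1}u\,dz'$ is genuinely nonlocal in $z_2$ and non-Lipschitz, so controlling $\int w\,\partial_{z_2}u\cdot(\text{difference})$ requires the anisotropic Ladyzhenskaia-type estimates of \cite{GMR17,GKVZ14} rather than the clean bound \eqref{SobolevNS}, and one must keep track of the $\d_{z_2}$-derivative norms throughout so that the same functional $S$ appears in both \textbf{H1} and \textbf{H2} and the balance \eqref{condtheta} reduces exactly to \eqref{defK2dhns}. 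Once the relevant inequalities from \cite[Section~3.2]{GMR17} are transcribed into the \textbf{H1}--\textbf{H2} format, the rest is a routine application of the general toolkit.
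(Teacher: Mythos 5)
Your proposal follows essentially the same route as the paper's proof: the same choices $U(x)=\|x\|_H^2+\|\d_{z_2}x\|_H^2$, $S(x)=\|x\|_V^2+\|\d_{z_2}x\|_V^2$, $q(x,y)=\|x-y\|_H^2$, the same low-mode control term defining the generalized coupling (as in \cite[Section~3.2]{GMR17} and \cite[Section~6.2.2]{KS17}), verification of \textbf{H1}--\textbf{H3} via the anisotropic dissipativity bound and energy estimates imported from \cite{GMR17} together with Lemma~\ref{L:pomoshH3}, the Lyapunov function $V=U$, and the conclusion via Theorem~\ref{T:lyapSPDE}. This matches the paper's argument in both structure and substance.
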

\begin{proof}
As in the proof of Theorem~\ref{tNS}, we will utilize Theorem~\ref{T:lyapSPDE}.  Let us check that all the conditions of Theorem~\ref{T:lyapSPDE} are satisfied.  Put
$$
U(x):=\|x\|_H^2+\|\d_{z_2} x\|_H^2, \quad S(x)=\|x\|_V^2+\|\d_{z_2} x\|_V^2, \quad q(x,y)=\|x-y\|_H^2, \quad x,y\in V.
$$
We use the same generalized coupling construction as in \cite[Section~6.2.2]{KS17} (which in turn is a minor variation of the construction from \cite[Section~3.2.4]{GMR17}).

Fix $x,y\in V$. Take $X^{x,y}:=u^x$ and let $Y^{x,y}$  be the solution to the same equation with the additional control  term in the right hand side
$$
\frac12\nu K P^K(X^{x,y}_t-Y^{x,y}_t)dt
$$
and starting from $Y^{x,y}_0=y$; here $P^K$ is the projection to the space $\Span (e_{i,j}:\,\lambda_{i,j}\le K )$.

Then an application of the It\^o formula, Poincar\'e inequality, {\blue and Ladyzhenskaia bound \cite[Formula (6)]{L59}} yields (see the inequality at \cite[line~1, p. 633]{GMR17})
\begin{multline}\label{diffest2dhns}
\frac12 d\|X^{x,y}(t)-Y^{x,y}(t)\|_H^2+\frac12\nu K\|X^{x,y}(t)-Y^{x,y}(t)\|_H^2dt\\
\le 4(1\vee h)\nu^{-1}\|X^{x,y}(t)\!-\!Y^{x,y}(t)\|_H^2 \bigl(\|X^{x,y}(t)\|_V^2\!+\!\|\d_{z_2} X^{x,y}(t)\|_H\|\d_{z_2} X^{x,y}(t)\|_V\bigr).
\end{multline}
By the Poincar\'e inequality, for any $\phi\in H^1(D)$ such that $\phi\evalat{\d D}=0$ we have
$$
{\blue \pi^2(h^{-2}+L^{-2})\|\phi\|_{L^2}\le\|\grad\phi\|_{L^2}.}
$$
This combined with \eqref{diffest2dhns} and the Gronwall inequality implies now
\begin{equation*}
q(X^{x,y}(t),Y^{x,y}(t))\le q(x,y) \exp\Bigl(-\nu Kt+\kappa\int_0^t (\|X^{x,y}(s)\|_V^2+\|\d_{z_2} X^{x,y}(s)\|_V^2)\,ds\Bigr),
\end{equation*}
where
$$
\kappa:=8(1\vee h)\nu^{-1}(1+{\blue\frac{\sqrt{hL}}{\sqrt2\pi}}).
$$
Thus Assumption~\textbf{H1} is satisfied.

To check \textbf{H2} we use the standard energy estimates  \cite[formulae (3.15) and (3.17)]{GMR17}. We get
\begin{multline}\label{enest2dhns}
d (\|X^{x,y}(t)\|_H^2+ \|\d_{z_2}X^{x,y}(t)\|_H^2)+2\nu(\|X^{x,y}(t)\|_V^2+\|\d_{z_2} X^{x,y}(t)\|_V^2)dt\\=(\|\sigma\|_H^2+\|\d_{z_2} \sigma\|_H^2)dt+M(t),
\end{multline}
where $M$ is a continuous local martingale with the quadratic variation
$$
d\langle M\rangle_t\leq\! (4 \|X^{x,y}(t)\|_H^2\|\sigma\|_H^2+4 \|\d_{z_2} X^{x,y}(t)\|_H^2\|\d_{z_2}\sigma\|_H^2)dt\le C\| X^{x,y}(t)\|_V^2dt\le \!
C S(X^{x,y}(t)),
$$
for some $C>0$; here in the second inequality we used the  Poincar\'e inequality. Thus \textbf{H2} is satisfied with $\mu:=2\nu$ and $b:=\|\sigma\|_H^2+\|\d_{z_2} \sigma\|_H^2$. Inequality \eqref{condtheta} holds due to the definition of $K$ in \eqref{defK2dhns}.

Denote
$$
W^{x,y}(t):=W(t)+\int_0^t \frac12\nu K\sigma^{-1} P^K(X^{x,y}_t-Y^{x,y}_t)dt,\quad t\ge0.
$$
By \eqref{nondeg2dhns}, we see that this process is well--defined. Arguing as in the proof of theorem  Theorem~\ref{tNS}, we see that the uniqueness of the solution to \eqref{2dhns} implies that for each $t\ge0$ there exists a measurable mapping $\Phi_t^y\colon \C([0,t],\R^m)\to V$ such that
$Y^{x,y}(t)=\Phi_t^y(W_{[0,t]}^{x,y})$ and $u^y(t)=\Phi_t^y(W_{[0,t]})$. Further, thanks again to condition \eqref{nondeg2dhns}, we see that (similar to \eqref{beta_XY}) there exists a constant $C>0$ such that for any $t\ge0$
$$
\|\frac12\nu K\sigma^{-1} P^K(X^{x,y}_t-Y^{x,y}_t)\|_{\R^m}^2\le C \|X^{x,y}_t-Y^{x,y}_t\|^2_H=C q(X^{x,y}_t,Y^{x,y}_t).
$$
Thus, by  Lemma~\ref{L:pomoshH3}, condition \textbf{H3} is also satisfied.

Finally, we consider the following Lyapunov function:
$$
V(x)=\|x\|_H^2+\|\d_{z_2} x\|_H^2,\quad x\in V.
$$
Condition \eqref{Llyap} follows from the energy estimate \eqref{enest2dhns}. By definition, for any $M>0$ the functions $U$ and $q$ are bounded on the level sets $\{V\le M\}$ and $\{V\le M\}\times\{V\le M\}$, respectively.

We note again that $\rho:=\|x-y\|_H\le q(x,y)^{1/2}$. Therefore all the conditions of Theorem~\ref{T:lyapSPDE} hold. This immediately implies the statement of the theorem.
\end{proof}

\subsection{The Fractionally Dissipative Euler Model}
In this and the next sections we will denote the fractional Laplacian by $\Lambda^\gamma:=-(-\Delta)^{\gamma/2}$, $\gamma\in(0,2)$. For a vector $\mathbf{f}(z)=(f_1(z),f_2(z))$, where $z=(z_1,z_2)\in\R^2$  we denote as usual $\curl \mathbf{f}:=\d_{z_1} f_2-\d_{z_2}f_1$

Fix $r>2$ and consider the stochastic fractionally dissipative Euler model on the periodic box $D:=[-\pi,\pi]^2$. In the velocity formulation the model is given by
\begin{align}\label{FDEM}
&\begin{aligned}[b]
d\u(z,t)+(\u(z,t)\cdot  \nabla)\u(z,t)dt&=(\Lambda^\gamma \u(z,t) -\nabla p(z,t))\,dt\\
&\phantom{=}+\sum_{k=1}^m \bsigma_k(z)dW^k(t),\quad z\in \D,\,t\ge0;
\end{aligned}\\
&\u(\cdot,0)=\bx,\,\,\nabla\cdot \u=0,\nn
\end{align}
 where
  $\mathbf{u} = (u_1, u_2)$ is the unknown velocity field, $p$ is the unknown pressure, $m\in\mathbb{N}$, $W=(W^1, W^2, \hdots, W^m)$ is a standard $m$--dimensional Brownian motion. We assume that for any $k=1,\hdots,m$ we have  $\bsigma_k\in H^{r+3}(\D)^2$, $\nabla\cdot \sigma_k=0$, $\int_D \sigma_k(z)\,dz=0$.

Introduce the following space:
 $$
 V:=\{\boldsymbol{\phi}\in H^r(D)^2\colon \nabla\cdot \boldsymbol{\phi}=0,\, \int_D \boldsymbol{\phi}(z)\,dz=0\}.
 $$
The eigenvectors $e_1, e_2,\hdots$ of the operator ${-\Lambda^\gamma}\bigr|_V$ (with the corresponding eigenvalues
$0<\lambda_1=1\le \lambda_2\le\hdots$) form a complete orthonormal basis of $V$.

 It is known (\cite[Section~3.3.1]{GMR17}, see also \cite[{\blue footnote at page 821}, Proposition~1.1 and Theorem~1.4]{CGHV14}) that for any $\bx\in V$ equation \eqref{FDEM} has a unique strong solution, which in the case of ambiguity will be further denoted by $\u^\bx$. Moreover, $\u$ is a Feller Markov process with the state space $V$. The existence and uniqueness of the invariant measure of $\u$ was shown in {\blue\cite[ Theorem 1.5]{CGHV14} and} \cite[Section~3.3]{GMR17}. The weak convergence of transition probabilities to the invariant measure was established in \cite[Section~6.2.3]{KS17}.
 It was conjectured in \cite[Section~3.3]{GMR17} that to establish the rate of convergence to the invariant measure one might need to exploit  high-order polynomial moment bounds of the solution in high-order Sobolev spaces. Actually, applying our method, we are able to show exponential rate of convergence without invoking these complicated bounds; we use just the calculations from \cite[Section~3.3.2]{GMR17}.


 \begin{Theorem}  There exists a universal $C>0$ such that if for some $N\in\Z_+$ one has
$$
\Span (e_k,\, k=1, \dots, N)\subset\Span (\bsigma_k,\, k=1, \dots, m)
$$
and
\begin{equation}\label{granicafrSNS}
\lambda_{N+1}>C \|\bsigma\|^2_{L_{\blue 6/\gamma}},
\end{equation}
then the stochastic fractionally dissipative Euler equation \eqref{FDEM} has a unique invariant measure $\pi$. Further,  there exist constants $C_1>0$, $r>0$ such that
\begin{equation*}
W_{\|\cdot\|_{L_2}\wedge1}(\Law (\u^\bx(t)),\pi) \le C_1 (1+\|\curl \bx\|^2_{L_{6/\gamma}}+\|\bx\|^2_{L_2})e^{-rt },\quad t\ge0,\,\bx\in V.
\end{equation*}
\end{Theorem}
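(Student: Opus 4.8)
The plan is to apply Theorem~\ref{T:lyapSPDE}, following verbatim the pattern of the proofs of Theorems~\ref{tNS} and~\ref{T:2dhns}; the only difference is that the roles of the $L_2$-based energy and dissipation are now played by vorticity quantities in $L_{6/\gamma}$, and the required estimates are precisely those collected in \cite[Section~3.3.2]{GMR17}. Concretely I would take
$$
q(\bx,\by):=\|\bx-\by\|_{L_2}^2,\qquad U(\bx):=\|\bx\|_{L_2}^2+\|\curl\bx\|_{L_{6/\gamma}}^2,
$$
and let $S(\bx)$ be the associated dissipation functional (the $L_2$-based fractional seminorm $\|\Lambda^{\gamma/2}\bx\|_{L_2}^2$ together with the corresponding dissipative contribution coming from the vorticity equation for $\curl\bx$). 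The generalized coupling is the one already used in \cite[Section~6.2.3]{KS17} (a minor variant of \cite[Section~3.3.4]{GMR17}): set $\bX^{\bx,\by}:=\u^\bx$ and let $\bY^{\bx,\by}$ solve \eqref{FDEM} started from $\by$ with the extra control term $-\tfrac{\lambda_{N+1}}2 P_N(\bX^{\bx,\by}(t)-\bY^{\bx,\by}(t))\,dt$, equivalently \eqref{FDEM} driven by $dW^{\bx,\by}(t)=dW(t)+\beta^{\bx,\by}(t)\,dt$ with $\beta^{\bx,\by}(t)=\tfrac{\lambda_{N+1}}2\sigma^{-1}P_N(\bX^{\bx,\by}(t)-\bY^{\bx,\by}(t))$, in complete analogy with \eqref{newWdr}.

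First I would verify \textbf{H1}: applying the It\^o formula to $t\mapsto\|\bX^{\bx,\by}(t)-\bY^{\bx,\by}(t)\|_{L_2}^2$, estimating the nonlinear term $(\u\cdot\grad)\u$ by the Ladyzhenskaia/Sobolev bounds used in \cite[Section~3.3.2]{GMR17}, and invoking Gronwall, one obtains a bound of the form \eqref{diss_abstract} with $\zeta=\lambda_{N+1}$ and a constant $\kappa>0$ depending only on $\gamma$, i.e.\ the analogue of \eqref{dissipativity_NS}. Next, \textbf{H2}: testing \eqref{FDEM} against $\u$ yields the $L_2$ balance, while the vorticity $\omega=\curl\u$ solves $d\omega+(\u\cdot\grad)\omega\,dt=\Lambda^\gamma\omega\,dt+\sum_{k=1}^m\curl\bsigma_k\,dW^k$, in which the transport term contributes nothing to the $L_{6/\gamma}$ balance and the fractional dissipation produces the required term $\mu\int_0^t S(\bX^{\bx,\by}_s)\,ds$; the stochastic integrals combine into a continuous local martingale $M$ with $M_0=0$ whose bracket is bounded by $b_1 S(\bX^{\bx,\by}_t)\,dt+b_2\,dt$ as in \eqref{martingale_abstract}. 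Here $b$ can be taken to be a universal multiple of a suitable Sobolev norm of $\bsigma$, so that the balance condition \eqref{condtheta}, namely $\zeta>\kappa b/\mu$, becomes exactly \eqref{granicafrSNS} for an appropriate universal constant $C$. Then \textbf{H3} follows from Lemma~\ref{L:pomoshH3}: the spanning hypothesis $\Span(e_k,\,k=1,\dots,N)\subset\Span(\bsigma_k,\,k=1,\dots,m)$ makes the pseudo-inverse $\sigma^{-1}$ on $\Span(e_k,\,k=1,\dots,N)$ well defined and bounded, whence $|\beta^{\bx,\by}(s)|^2\le C\|\bX^{\bx,\by}(s)-\bY^{\bx,\by}(s)\|_{L_2}^2=C\,q(\bX^{\bx,\by}_s,\bY^{\bx,\by}_s)$ (cf.\ \eqref{beta_XY}), and the measurable map $\Phi^\by_t$ with $\bY^{\bx,\by}(t)=\Phi^\by_t(W^{\bx,\by}_{[0,t]})$ and $\u^\by(t)=\Phi^\by_t(W_{[0,t]})$ is supplied by well-posedness of \eqref{FDEM} together with the Girsanov theorem, exactly as in Section~\ref{SS:TDNS}.

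Finally I would take the Lyapunov function $V(\bx):=\|\bx\|_{L_2}^2+\|\curl\bx\|_{L_{6/\gamma}}^2$; inequality \eqref{Llyap} follows from the same energy estimate as in \textbf{H2} combined with the Poincar\'e inequality on the periodic box, upon taking expectations, and by construction $U$ and $q$ are bounded on $\{V\le M\}$ and $\{V\le M\}\times\{V\le M\}$ for every $M>0$. Since $\rho=\|\cdot\|_{L_2}\le q^{1/2}$, all hypotheses of Theorem~\ref{T:lyapSPDE} are met with $\delta=1/2$, and the claimed exponential bound follows with this $V$. The main obstacle is \textbf{H2}: one must choose $U$ and $S$ so that the dissipation $S$ simultaneously dominates the functional appearing in the exponent of the \textbf{H1} bound, controls the martingale bracket in the sense of \eqref{martingale_abstract}, and is compatible with the exponential-integrability requirement used in Lemma~\ref{P:41}. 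For the fractionally dissipative Euler model --- where $\Lambda^\gamma$ with $\gamma<2$ is too weak to close a purely $L_2$-based scheme --- this forces one to work with the vorticity in $L_{6/\gamma}$, which is the whole point of the estimates imported from \cite[Section~3.3.2]{GMR17}; the substance of the proof is precisely to check that these estimates fit the abstract template \textbf{H1}--\textbf{H3}, in particular that no higher-order Sobolev moments of the solution are needed, contrary to the conjecture in \cite[Section~3.3]{GMR17}.
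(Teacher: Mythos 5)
Your proposal is correct and follows essentially the same route as the paper's proof: the same generalized coupling with control term proportional to $P_N(\bX^{\bx,\by}-\bY^{\bx,\by})$, verification of \textbf{H1}--\textbf{H3} via the estimates imported from \cite[Section~3.3.2]{GMR17} together with Lemma~\ref{L:pomoshH3}, and an application of Theorem~\ref{T:lyapSPDE} with a comparable Lyapunov function; the paper merely takes $U(\bx)=(1+\|\curl\bx\|_{L_{6/\gamma}}^{6/\gamma})^{\gamma/3}$ and $S(\bx)=\|\curl\bx\|_{L_{6/\gamma}}^2$ so as to match \cite[formulae (3.28)--(3.30)]{GMR17} verbatim. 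Only note that the displayed control term in the $\bY$-equation should read $+\tfrac{\lambda_{N+1}}{2}P_N(\bX^{\bx,\by}(t)-\bY^{\bx,\by}(t))\,dt$ (pushing $\bY$ toward $\bX$), consistent with your own Girsanov reformulation and with \eqref{newWdr}.
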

\begin{proof}
Let us again check that all the conditions of Theorem~\ref{T:lyapSPDE} are satisfied. For brevity we denote $p:=6/\gamma$  and put
$$
U(\bx):=(1+\|\curl \bx\|_{L_p}^p)^{2/p}, \quad S(\bx):=\|\curl \bx\|_{L_p}^2, \quad q(\bx,\by):=\|\bx-\by\|^2_{L_2},\quad \bx,\by\in V.
$$
Since $\bx\in H^r(D)^2$, we see that $\curl \bx\in H^{r-1}(D)$ and thus by the Sobolev embedding theorem $\curl \bx\in L_\infty(D)$ thanks to the assumption $r>2$. Therefore $U$ is finite on $V$.

We again use the same coupling construction as in \cite[Section~6.2.3]{KS17} (which in turn is a small modification of the construction from \cite[Section~3.3.2]{GMR17}). Fix $\bx,\by\in V$. We take $\bX^{\bx,\by}:=\u^\bx$ and  let  $\bY^{\bx,\by}$ be the solution to the same  equation with the extra term in the right--hand side
$$
\frac12\lambda_{N+1} P_N(\bX^{\bx,\by}(t)-\bY^{\bx,\by}(t))dt
$$
and started from the initial condition $\bY^{\bx,\by}_0=\by$.  Then by \cite[formula (3.28)]{GMR17} (see also  \cite[formulae (4.15)--(4.20)]{CGHV14}) there exists a universal $C_1>0$ such that
$$
q(\bX^{\bx,\by}(t),\bY^{\bx,\by}(t))\le q(\bx,\by)\exp\bigl(-\lambda_{N+1}t+C_1\int_0^t S(\bX^{\bx,\by}(s))\,ds\bigr).
$$
Therefore assumption \textbf{H1} is satisfied.

Assumption \textbf{H2} follows from the corresponding energy estimate \cite[formula (3.29)]{GMR17}. Indeed, this estimate immediately implies that for some universal constants $C_2,C_3>0$ we have
$$
d U(\bX^{\bx,\by}(t))+C_2 S(\bX^{\bx,\by}(t))dt\le C_3 \|\sigma\|_{L^p}^2dt +M(t),
$$
where $M$ is a continuous local martingale with quadratic variation bounded by
$$
d\langle M\rangle_t\leq 4 \|\sigma\|_{L^p}^2 U(X^{x,y}(t))dt\le 4 \|\sigma\|_{L^p}^2 (1+S(X^{x,y}(t)))dt,
$$
see \cite[formula (3.30)]{GMR17}. This implies \textbf{H2}. Inequality  \eqref{condtheta} follows from \eqref{granicafrSNS}, where we took $C:=C_1C_3/C_2$.

Assumption \textbf{H3} follows from the uniqueness of a strong solution to \eqref{FDEM} by exactly the same argument as in the proofs of Theorem~\ref{tNS} and \ref{T:2dhns}.

Finally we consider the Lyapunov function
$$
V(\bx):=U(\bx)+\|\bx\|_{L_2}^2,\quad  \bx\in V.
$$
It is clear that for any $M>0$ the functions $U$ and $q$ are bounded on the level sets $\{V\le M\}$ and $\{V\le M\}\times\{V\le M\}$, respectively. Further, we have a simple energy estimate
\begin{equation}\label{Ito2dfsns}
d \|\bX^{\bx,\by}(t)\|^2_{L^2}+2\|\Lambda^{\gamma/2}\bX^{\bx,\by}(t)\|^2_{L^2} dt=\|\sigma\|^2_{L^2}\, dt+d\wt M(t),
\end{equation}
for some martingale $\wt M$. Since $\bX^{\bx,\by}(t)$ is a mean zero function, the Poincar\'e inequality implies
\begin{equation*}
\|\Lambda^{\gamma/2}\bX^{\bx,\by}(t)\|_{L^2}\ge\|\bX^{\bx,\by}(t)\|_{L^2}.
\end{equation*}
Combining this with \eqref{Ito2dfsns} and with the energy estimate for $\|\curl \bx\|_{L_p}^p$ \cite[formula (3.29)]{GMR17}, we obtain \eqref{Llyap}.

We note again that $\rho:=\|\bx-\by\|_{L_2}\le q(\bx,\by)^{1/2}$. Therefore all the conditions of Theorem~\ref{T:lyapSPDE} hold. This immediately implies the statement of the theorem.
\end{proof}

\subsection{The 2D Damped Stochastically Forced Euler-Voigt Model}

Our next example is the 2D damped stochastically forced Euler--Voigt Model. Fix $\gamma>2/3$  and consider the following equation on the periodic box $D:=[-\pi,\pi]^2$
\begin{align}\label{neg2dsns}
&d\u(z,t)\!+\!(\nu\u(z,t)\!+\! (\u_\gamma(z,t)\cdot  \nabla)\u_\gamma(z,t)dt\!+\!\nabla p(z,t))dt=\sum_{k=1}^m \bsigma_k(z)dW^k(t),\\
&\u(\cdot,0)=\bx,\,\,\nabla\cdot \u=0,\,\, \Lambda^\gamma\u_\gamma=\u,\nn
\end{align}
 where  $\mathbf{u} = (u_1, u_2)$ is the unknown velocity field, $p$ is the unknown pressure, $\nu>0$, $m\in\mathbb{N}$, $W=(W^1, W^2, \hdots, W^m)$ is a standard $m$--dimensional Brownian motion. We assume that for any $k=1,\hdots,m$ we have  $\bsigma_k\in H^{1-\gamma/2}(\D)^2$, $\nabla\cdot \sigma_k=0$, $\int_D \sigma_k(z)\,dz=0$.

Introduce the following space:
 $$
 V:=\{\boldsymbol{\phi}\in H^{1-\gamma/2}(D)^2\colon \nabla\cdot \boldsymbol{\phi}=0,\, \int_D \boldsymbol{\phi}(z)\,dz=0\}.
 $$
The eigenvectors $e_1, e_2,\hdots$ of the operator $-\Delta\bigr|_V$ (with the corresponding eigenvalues
$0<\lambda_1=1\le \lambda_2\le\hdots$) form a complete orthonormal basis of $V$.

 It is known (\cite[Proposition~3.4]{GMR17}) that for any $\bx\in V$ equation \eqref{FDEM} has a unique strong solution, which in the case of ambiguity will be further denoted by $\u^\bx$. Moreover, $\u$ is a Markov process with the state space $V$. Note however that the Feller property of $\u$ is known only with respect to a weaker $H^{-\gamma/2}$ norm rather than  $H^{1-\gamma/2}$ norm. This causes several technical difficulties, which however are not very crucial and can be successfully resolved.

 First, we note that the process $\u^\bx$ is progressively measurable. This follows from the facts that  $\u^\bx$ is stochastically continuous in the weaker $H^{-\gamma/2}$ norm and the Borel $\sigma$-algebras generated by the $H^{-\gamma/2}$- and the $H^{1-\gamma/2}$-norms are the same.

  Next, we cannot apply here Theorem~\ref{T:lyapSPDE}, which requires the Feller property of the Markov semigroup.
 Therefore to show the exponential convergence of transition probabilities we apply  \cite[Theorem~4.8]{HMS11}, which does not require the Feller property, together with Lemma~\ref{P:41} and Theorem~\ref{L:contr}. Note that the existence of the  invariant measure of $\u$ does not follow from these considerations; fortunately, it was already established in
 \cite[Proposition~3.5]{GMR17}.

In this section we also use the notation $\|\phi\|_{H^s}:=\|\Lambda^s \phi\|_{L^2}$, $s\in\R$.

 \begin{Theorem}  There exists a universal $C>0$ such that if for some $N\in\Z_+$ one has
$$
\Span (e_k,\, k=1, \dots, N)\subset\Span (\bsigma_k,\, k=1, \dots, m)
$$
and
\begin{equation}\label{granicaEVM}
\lambda_{N+1}^{\gamma/2-1/3}>C \frac{\|\curl \bsigma\|^2_{H^{-\gamma/2}}}{\nu^3},
\end{equation}
then  equation \eqref{neg2dsns} has a unique invariant measure $\pi$. Further, there exist constants $C_1>0$, $r>0$ such that
\begin{equation}\label{rateneg2dsns}
W_{\|\cdot\|_{H^{-\gamma/2}}\wedge1}(\Law (\u^\bx(t)),\pi) \le C_1 (1+\|\bx\|^2_{H_{1-\gamma/2}})e^{-rt },\quad t\ge0,\,\bx\in V.
\end{equation}
\end{Theorem}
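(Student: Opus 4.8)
The plan is to follow the scheme of the three preceding SPDE examples: verify Assumptions \textbf{H1}--\textbf{H3} for an appropriate triple of functions, deduce Assumptions \textbf{\ref{A:1}} and \textbf{\ref{A:2}} from Lemma~\ref{P:41}, and conclude with the ergodic machinery. The structural novelty, already flagged before the statement, is that $\u$ is Feller only with respect to the weaker $H^{-\gamma/2}$-norm, so Theorem~\ref{T:lyapSPDE} (hence Theorem~\ref{T:Lyap2}) cannot be quoted directly. Instead, with Assumptions \textbf{\ref{A:1}} and \textbf{\ref{A:2}} in hand, I would invoke Theorem~\ref{L:contr}(i) and (iii) to produce a distance-like function $d_N$ that is contracting for $P_t$ for all large $t$ and for which the sublevel set $\{V\le 4K/\gamma_0\}$ of the Lyapunov function is $d_N$-small for all large $t$, so that the hypotheses of \cite[Theorem~4.8]{HMS11} (which does not need the Feller property) --- a Lyapunov condition with linear $\phi$, a contracting distance-like function, and a $d$-small sublevel set --- are met, exactly as in the proof of Theorem~\ref{T:Lyap2}. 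This yields exponential decay of $W_{d_N}(P_t(\bx,\cdot),\pi)$, and since $d_N$ dominates $\|\cdot\|_{H^{-\gamma/2}}\wedge1$ pointwise, the bound \eqref{rateneg2dsns} follows by monotonicity of $W$. Existence of $\pi$ is not produced by this argument; it is taken from \cite[Proposition~3.5]{GMR17}, and uniqueness then follows from the contraction. Finally, progressive measurability of $\u^\bx$, needed in Assumption~\textbf{H}, is not an issue because $\u^\bx$ is stochastically continuous in the $H^{-\gamma/2}$-norm and the $H^{-\gamma/2}$- and $H^{1-\gamma/2}$-Borel $\sigma$-algebras coincide.

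For the concrete data I would take the Polish space $(E,\rho)=(V,\|\cdot\|_{H^{1-\gamma/2}})$, the premetric $q(\bx,\by):=\|\bx-\by\|_{H^{-\gamma/2}}^{2}$, and $U,S,V$ all equal to $\|\cdot\|_{H^{1-\gamma/2}}^{2}$, together with the generalized coupling of \cite[Section~3.4]{GMR17}: $\bX^{\bx,\by}:=\u^\bx$, and $\bY^{\bx,\by}$ the solution of \eqref{neg2dsns} started at $\by$ with the extra control term $\frac12\eta\,P_N(\bX^{\bx,\by}_t-\bY^{\bx,\by}_t)\,\dd t$ on the right-hand side, where $P_N$ is the orthogonal projection onto $\Span(e_1,\dots,e_N)$ and $\eta$ is a suitable positive multiple of $\nu\lambda_{N+1}$. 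Assumption~\textbf{H2} is the $H^{1-\gamma/2}$-energy (enstrophy) estimate for $\u^\bx$: the nonlinear term drops out by the 2D cancellation $\la(\mathbf{v}\cdot\grad)\curl\mathbf{v},\curl\mathbf{v}\ra=0$, and the damping $-\nu\u$ leaves $\dd U(\bX^{\bx,\by}_t)+2\nu S(\bX^{\bx,\by}_t)\,\dd t\le b\,\dd t+\dd M_t$ with $b$ a multiple of $\|\curl\bsigma\|_{H^{-\gamma/2}}^{2}$ and $\dd\la M\ra_t\le b_1 S(\bX^{\bx,\by}_t)\,\dd t$ (this is the estimate of \cite[Section~3.4]{GMR17}); since $S=V$ this is simultaneously the Lyapunov inequality \eqref{Llyap}, with $K=b$ and rate $\gamma_0=2\nu$. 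Assumption~\textbf{H1} is the dissipativity bound of \cite[Section~3.4]{GMR17} for the $H^{-\gamma/2}$-distance of the controlled pair, of the form \eqref{diss_abstract} with $\mu=2\nu$ and with $\zeta$, $\kappa$ chosen so that the balance condition \eqref{condtheta}, $\zeta>\kappa b/\mu$, reduces exactly to \eqref{granicaEVM} for an appropriate universal constant $C$.

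Assumption~\textbf{H3} I would get from Lemma~\ref{L:pomoshH3}. The $\bY^{\bx,\by}$-equation is \eqref{neg2dsns} with $\by$ in place of $\bx$ and $\dd W$ replaced by $\dd W^{\bx,\by}_t=\dd W_t+\beta^{\bx,\by}_t\,\dd t$, $\beta^{\bx,\by}_t=\frac12\eta\,\sigma^{-1}P_N(\bX^{\bx,\by}_t-\bY^{\bx,\by}_t)$, where $\sigma^{-1}\colon\Span(e_1,\dots,e_N)\to\R^m$ is the bounded pseudo-inverse (well defined by the spanning hypothesis). Uniqueness of strong solutions and the Girsanov theorem give conditions~1 and 2 of the lemma, and since $P_N$ has range in a fixed finite-dimensional space, all Sobolev norms of $P_N(\cdot)$ are comparable, whence $|\beta^{\bx,\by}_t|^{2}\le c\,\|\bX^{\bx,\by}_t-\bY^{\bx,\by}_t\|_{H^{-\gamma/2}}^{2}=c\,q(\bX^{\bx,\by}_t,\bY^{\bx,\by}_t)$ with $c=c(N)$, which is condition~3. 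Lemma~\ref{P:41} then yields Assumptions~\textbf{\ref{A:1}} and \textbf{\ref{A:2}} for the premetric $\theta(\bx,\by)=e^{QU(\bx)}q(\bx,\by)^{\alpha}$ with $\alpha\le1/2$. The remaining points are routine: $\|\bx-\by\|_{H^{-\gamma/2}}\wedge1=q(\bx,\by)^{1/2}\wedge1\le q(\bx,\by)^{\alpha}\wedge1\le d_N(\bx,\by)$ for $N\ge1$; $U$ and $q$ are bounded on $\{V\le M\}$ and on $\{V\le M\}\times\{V\le M\}$ (trivially, since $U=V$ and $\|\cdot\|_{H^{-\gamma/2}}\le\|\cdot\|_{H^{1-\gamma/2}}$); and \eqref{Llyap} has linear $\phi$. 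Assembling as above gives \eqref{rateneg2dsns} with prefactor $1+V(\bx)=1+\|\bx\|_{H^{1-\gamma/2}}^{2}$.

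The main obstacle is Assumption~\textbf{H1}: obtaining \eqref{diss_abstract} in the \emph{weak} $H^{-\gamma/2}$-norm with the exponent carrying exactly the power $\lambda_{N+1}^{\gamma/2-1/3}$. The damping $-\nu\u$ contracts all Fourier modes at the same rate $\nu$, so in the equation for $\bW=\bX^{\bx,\by}-\bY^{\bx,\by}$ the only dangerous term is $(\u_\gamma\cdot\grad)\u_\gamma$, whose contribution to $\frac{d}{dt}\|\bW\|_{H^{-\gamma/2}}^{2}$ is, after integration by parts, bounded by $\|\bW_\gamma\|_{L^p}^{2}\,\|\bX^{\bx,\by}\|_{H^{1-\gamma/2}}$ (with $\bW_\gamma$ the corresponding regularized difference) and $\|\bX^{\bx,\by}\|_{H^{1-\gamma/2}}$ unbounded in time. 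This is exactly why the control is needed: one splits $\bW=P_N\bW+(I-P_N)\bW$, absorbs the low-mode part into the control, and estimates the high-mode part by a Sobolev--interpolation chain $\|\bW_\gamma\|_{L^p}\lesssim\|\bW\|_{H^{\sigma}}\lesssim\lambda_{N+1}^{a}\,\|(I-P_N)\bW\|_{H^{-\gamma/2}}$ with a negative exponent $a$ (this is where the restriction $\gamma>2/3$ and the value $\gamma/2-1/3$ enter); the residual, still unbounded, factor $\|\bX^{\bx,\by}\|_{H^{1-\gamma/2}}=S(\bX^{\bx,\by})^{1/2}$ is kept inside the exponential of \eqref{diss_abstract} --- that is what the $\int_0^t S(\bX^{\bx,\by}_s)\,\dd s$ term is for --- and absorbed in expectation via \textbf{H2}, as in the proof of Lemma~\ref{P:41}. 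Carrying out these estimates with sharp constants is the only genuine computation, and since it is essentially \cite[Section~3.4]{GMR17}, I would transcribe the relevant inequalities from there rather than redo them.
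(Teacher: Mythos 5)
Your proposal is correct and follows essentially the same route as the paper: the same low-mode control $P_N(\bX-\bY)$ coupling, verification of \textbf{H1}--\textbf{H3} (with $U,S,q$ equivalent to the paper's curl-based choices and the dissipativity bound transcribed from \cite[Section~3.4]{GMR17}), Lemma~\ref{P:41} plus Theorem~\ref{L:contr}(i),(iii), and then \cite[Theorem~4.8]{HMS11} together with existence of $\pi$ from \cite[Proposition~3.5]{GMR17} to bypass the missing $H^{1-\gamma/2}$-Feller property. The only point the paper treats that your sketch omits is the measurability of $(\bx,\by)\mapsto W_{d_N}(P_t(\bx,\cdot),P_t(\by,\cdot))$ (obtained there from the $H^{-\gamma/2}$-Feller property, continuity of $d_N$, and \cite[Corollary~5.22]{Villani}), which is needed, together with $\pi$-integrability of the Lyapunov function, to invoke \cite[formula~(4.6)]{HMS11} and get the prefactor $1+\|\bx\|^2_{H^{1-\gamma/2}}$ in \eqref{rateneg2dsns}.
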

\begin{proof}
First let us check that Assumptions \textbf{H1}--\textbf{H3} hold.

Put
$$
U(\bx)=S(\bx):=\|\Lambda^{-\gamma/2}\curl \bx\|_{L_2}^2, \quad q(\bx,\by)=\|\Lambda^{-\gamma/2}(\bx-\by)\|_{L_2}^2,\quad \bx,\by\in V.
$$

We use the same coupling construction as in \cite[Section~6.2.4]{KS17} (which again is a minor modification of the corresponding construction from \cite[Section~3.4.4]{GMR17}). Fix $\bx,\by\in V$. Take $\bX^{\bx,\by}:=\u^\bx$ and  let  $\bY^{\bx,\by}$ be the solution to the same  equation with the extra term in the right--hand side
$$
\frac12\nu \lambda_{N+1}^{\gamma/2-1/3} P_N(\bX^{\bx,\by}(t)-\bY^{\bx,\by}(t))dt
$$
and started from the initial condition $\bY^{\bx,\by}_0=\by$ (recall that $\gamma>2/3$).  Then \cite[formula (3.45)]{GMR17} and the Sobolev embedding $H^{1/3}\subset L_3$ imply that for some universal constant $C_1>0$
\begin{align}\label{Chastodinotsenki}
\frac12\frac{d}{dt}&q(\bX^{\bx,\by}(t),\bY^{\bx,\by}(t))\!+\!\nu q(\bX^{\bx,\by}(t),\bY^{\bx,\by}(t))\!+\!
\frac12\nu \lambda_{N+1}^{\gamma/2-1/3} \|P_N\Lambda^{-\gamma/2}(\bX^{\bx,\by}(t)\!-\!\bY^{\bx,\by}(t))\|_{L_2}^2\nn\\
&\le C_1\|\Lambda^{-\gamma}(\bX^{\bx,\by}(t)-\bY^{\bx,\by}(t))\|^2_{H^{1/3}}\|\Lambda^{-\gamma}\curl(\bX^{\bx,\by}(t))\|_{H^{1/3}}.
\end{align}
By the Poincar\'e inequality,
\begin{align*}
C_1\|\bX^{\bx,\by}&(t)-\bY^{\bx,\by}(t)\|^2_{H^{-\gamma+1/3}}\|\curl(\bX^{\bx,\by}(t))\|_{H^{-\gamma+1/3}}\\
\le&\frac12
\|\bX^{\bx,\by}(t)-\bY^{\bx,\by}(t)\|^2_{H^{-\gamma+1/3}}\Bigl(\nu \lambda_{N+1}^{\gamma/2-1/3}\!+\nu^{-1} \lambda_{N+1}^{-\gamma/2+1/3} C_1^2\|\curl(\bX^{\bx,\by}(t))\|_{H^{-\gamma+1/3}}^2\Bigr)\\
\le&\frac12\nu \lambda_{N+1}^{\gamma/2-1/3} \|P_N\Lambda^{-\gamma/2}(\bX^{\bx,\by}(t)\!-\!\bY^{\bx,\by}(t))\|_{L_2}^2
+\frac12\nu q(\bX^{\bx,\by}(t),\bY^{\bx,\by}(t))\\
&+\frac12\nu^{-1} \lambda_{N+1}^{-\gamma/2+1/3}C_1^2 q(\bX^{\bx,\by}(t),\bY^{\bx,\by}(t))S(\bX^{\bx,\by}(t)).
\end{align*}
Combining this with \eqref{Chastodinotsenki} and applying the Gronwall inequality, we obtain
$$
q(\bX^{\bx,\by}(t),\bY^{\bx,\by}(t))\le q(\bx,\by)\exp\bigl(-\nu t+C_1^2\nu^{-1}\lambda_{N+1}^{-\gamma/2+1/3}\int_0^t S(\bX^{\bx,\by}(s))\,ds\bigr).
$$
Therefore assumption \textbf{H1} is satisfied.

Assumption \textbf{H2} follows immediately from \cite[formula (3.35)]{GMR17}. Indeed, rewriting this formula in our notation we get,
$$
d U(\bX^{\bx,\by}(t))+2\nu S(\bX^{\bx,\by}(t)) dt=\|\curl \bsigma\|_{H^{-\gamma/2}}^2dt+M(t),
$$
where $M$ is a continuous local martingale with quadratic variation bounded by
$$
d\langle M\rangle_t\leq 4 \|\curl \bsigma\|_{H^{-\gamma/2}}^2 S(\bX^{\bx,\by}(t))dt.
$$
This yields \textbf{H2}. Inequality  \eqref{condtheta} follows from \eqref{granicaEVM}, where we took $C:=C_1^2/2$.

To verify \textbf{H3}, we use the same argument as in the proofs of Theorems~\ref{tNS} and \ref{T:2dhns}. Note that condition 3 of Lemma~4.1 holds since by the Poincar\'e inequality for any $t\ge0$
$$
\|P_N(\bX^{\bx,\by}(t)-\bY^{\bx,\by}(t))\|_{L_2}^2\le\lambda_N^{\gamma/2}\|P_N(\bX^{\bx,\by}(t)-\bY^{\bx,\by}(t))\|_{H^{-\gamma/2}}^2\le\lambda_N^{\gamma/2} q(\bX^{\bx,\by}(t),\bY^{\bx,\by}(t)).
$$

Thus all the assumptions \textbf{H1}--\textbf{H3} hold.

Finally, we consider the Lyapunov function
$$
V(\bx):=\|\curl \bx\|_{H^{-\gamma/2}}^2+\|\bx\|_{H^{-\gamma/2}}^2,\quad  \bx\in V.
$$
Obviously, for any $M>0$ the functions $U$ and $q$ are bounded on the level sets $\{V\le M\}$ and $\{V\le M\}\times\{V\le M\}$, respectively. Further, by adding  energy estimates \cite[formulae (3.34) and (3.35)]{GMR17}, we obtain
$$
\E V(\u^\bx(t))\le  V(\bx) e^{-2\nu t}+K_V,\quad t\ge0
$$
for some universal constant $K_V>0$.

Now we are ready to complete the proof of the theorem. The existence of an invariant measure $\pi$ was established in
 \cite[Proposition~3.5]{GMR17}. Since $V$ is a Lyapunov function for $P_t$, $V$ is integrable with respect to  any invariant measure (see, e.g, \cite[Proposition~4.24]{HaiCon}). Applying Lemma~\ref{P:41} and Theorem~\ref{L:contr}(i),~(iii), we see that for some $N>0$, $t>0$, $\alpha>0$, $C>0$ the distance-like function
$$
d_N(\mathbf{x_1},\mathbf{x_2}):=\bigl(N e^{C (U(\mathbf{x_1})\wedge U(\mathbf{x_2}))}q(\mathbf{x_1},\mathbf{x_2})^\alpha\bigr)\wedge 1
$$
is contracting for $P_t$ and the set $\{V\le 4 K_V\}$ is $d_N$--small for $P_t$. Thus all the conditions of \cite[Theorem~4.8]{HMS11} are satisfied. Hence the invariant measure is unique and the exponential bound on the convergence rate \eqref{rateneg2dsns} follows from \cite[formula (4.6)]{HMS11} and the integrability of $V$ with respect to $\pi$. Note that to use the integrability, one first has to check that $W_d(P_t(x, \cdot), P_t(y, \cdot))$ is measurable. This is the point, where the lack of the Feller property causes minor technical difficulty; in particular we can not refer here neither to  \cite[Lemma~{\blue 4.13}]{HMS11} nor to \cite[Theorem~4.4.3]{K17}. Note however, that the mapping
$$
V\times V\ni (x,y)\mapsto (P_t(x, \cdot), P_t(y, \cdot))\in \mathcal{P}(V)\times \mathcal{P}(V)
$$
is measurable (one can get this easily using the $H^{-\gamma/2}$-Feller property), and the function $d_N$ is continuous on $V\times V$. Then the required measurability follows by \cite[Corollary~5.22]{Villani}.
\end{proof}

\subsection{Boussinesq approximation for Rayleigh--B\'enard convection}
Finally, we consider Boussinesq approximation for the Rayleigh--B\'enard convection perturbed by additive noise. The physical motivation behind the model as well as the relevance of the model for fluid dynamics are explained in detail in \cite{FGRW16}.

Consider the following system of equations evolving on a domain $D:=[0,L]\times[0,1]$
\begin{align}\label{Bous1}
&\begin{aligned}[b]
\frac{1}{Pr}(d\u(z,t)+(\u(z,t)\cdot  \nabla)\u(z,t)dt)&=\Delta\u(z,t)dt +Ra\hspace{0.1em} \mathbf{i}_2\hspace{0.1em} T (z,t)dt -\nabla p(z,t)dt\\
&\phantom{=}+\sum_{k=1}^{m_1} \bsigma_k(z)dW^k(t),\quad z\in \D,\,t\ge0;
\end{aligned}\\
&d T(z,t) + (\u(z,t)\cdot  \nabla) T(z,t)dt=\Delta T(z,t)dt +\sum_{k=1}^{m_2} \rho_k(z)dB^k(t),\quad z\in \D,\,t\ge0;\label{Bous2}\\
&(\u(0), T(0))=\bx,\,\,\nabla\cdot \u=0,\nn\\
&\u\evalat{z_2=0}=\u\evalat{z_2=1}=0,\quad T\evalat{z_2=0}=\wt {Ra},\quad T\evalat{z_2=1}=0,\quad \text{$\u, T$ are periodic in $z_1$},\nn
\end{align}
 where $\mathbf{u} = (u_1, u_2)$ is the unknown velocity field; $p$ is the unknown pressure; $T$ is the unknown temperature;  $m_1,m_2\in\Z_+$; $W=(W^1, \hdots, W^{m_1})$ and $B=(B^1, \hdots, B^{m_2})$ are standard independent  $m_1$-- and $m_2$--dimensional Brownian motions, respectively. $Pr$, $Ra$, $\wt{Ra}$ denote positive constants and correspond to some physical parameters: $Pr$ is the Prandtl number, $Ra$ and $\wt{Ra}$ are Rayleigh numbers. We also used the notation  $\mathbf{i}_2:=(0,1)$.

Introduce the following spaces:
 \begin{align*}
 &V_1:=\{\boldsymbol{\phi}\in H^1(D)^2\colon \nabla\cdot \boldsymbol{\phi}=0,\, \boldsymbol{\phi}\evalat{z_2=0,1}=0, \text{$\boldsymbol{\phi}$ is periodic in $z_1$}\},\\
 &V_2:=\{\phi\in H^1(D)\colon \phi\evalat{z_2=0,1}=0,\, \text{$\phi$ is periodic in $z_1$}\},\\
 &\wt V_2:=\{\phi\in H^1(D)\colon \phi\evalat{z_2=0}=\wt {Ra}, \phi\evalat{z_2=1}=0,\, \text{$\phi$ is periodic in $z_1$}\}.
 \end{align*}
We denote by $H_1$ and $H_2$ the completions of $V_1$ and $V_2$ with respect to $L^2(D)^2$-- and $L^2(D)$--norms, correspondingly.  Put $V:=V_1\times V_2$, $H:=H_1\times H_2$.   The eigenvectors $e_1, e_2,\hdots$ of the Stokes operator (with the corresponding eigenvalues $0<\lambda_1\le \lambda_2\le\hdots$) form a complete orthonormal basis of $H_1$, and the eigenvectors $j_1, j_2,\hdots$ of the Laplace operator  (with the corresponding eigenvalues $0<\mu_1\le \mu_2\le\hdots$)  form a complete orthonormal basis of $H_2$. For $N\in\Z_+$ we denote by $P^{H_i}_N$ the projection onto the span of the first $N$ eigenvectors of $H_i$, $i=1,2$. By $\|\cdot\|_{H_1}$, $\|\cdot\|_{H_2}$, $\|\cdot\|_{H}$ we denote the $L_2$ norms in the spaces $H_1$, $H_2$, $H$, correspondingly.

 We assume that for each $k=1,\hdots, m_1$ we have $\bsigma_k\in H^{2}(\D)^2\cap V_1$ and for each
  $k=1,\hdots, m_2$ we have $\rho_k\in H^{2}(\D)\cap V_2$.

 It is known (\cite[Proposition~2.1]{FGRW16}) that for any $\bx\in H$ a system of equations \eqref{Bous1}--\eqref{Bous2} has a unique strong solution, which in the case of ambiguity will be further denoted by $(\u^\bx, T^\bx)$. Moreover, $(\u, T)$ is a Feller Markov process with the state space $H$ and for almost all $t>0$ we have $(\u^\bx(t), T^\bx(t))\in V_1\times \wt V_2$.

 It was shown in \cite[Theorem~1.1]{FGRT15} that if the velocity $u$ and temperature $T$ satisfies periodic boundary conditions, then the process  $(\u, T)$ is exponentially ergodic. However, as was noted in \cite{FGRW16}, ``using periodic boundary conditions in the vertical directions is not appropriate from the physical point of view''. Therefore, following \cite{FGRW16}, we equip the system  \eqref{Bous1}--\eqref{Bous2} with mixed periodic and non-homogeneous Dirichlet boundary conditions that are physically relevant.

 Note that the methods of  \cite[Theorem~1.1]{FGRT15} are not applicable to treat the system with mixed boundary conditions, see also a related discussion in \cite[Page 619, lines~36--42]{GMR17}. However, using Theorem~\ref{T:lyapSPDE}, we are able to establish exponential ergodicity of $(\u^\bx, T^\bx)$ with mixed boundary conditions in the case of small Rayleigh numbers

 \begin{Theorem}\label{T:Bous} Suppose that
 $$
 \wt{Ra} Ra < {\blue \pi^2\sqrt2-1}.
 $$
Assume further that there exists $N_1, N_2 \in\Z_+$ such that
$$
\P_{N_1}^{H_1} H_1 \subset\Span (\bsigma_k,\, k=1, \dots, m_1); \quad \P_{N_2}^{H_2} H_2 \subset\Span (\rho_k,\, k=1, \dots, m_2);
$$
and
\begin{align}
&\lambda_{N_1+1}>C_1(Pr)^{-3/2} (Pr  \|\bsigma\|^2_{H_1}+Ra^2\|\rho\|^2_{H_2}),\label{granicaBous}\\
&\mu_{N_2+1}>C_1(Pr)^{-1/2} (Pr  \|\bsigma\|^2_{H_1}+Ra^2\|\rho\|^2_{H_2})+C_2 Pr,\label{granicaBous2}
\end{align}
where $C_1={\blue \frac{ (1+1/\pi)(\sqrt2+16Pr^{-1/2})}{1\wedge \bigl(Ra^2(2-\pi^{-4}(1+Ra\wt{Ra})^2)\bigr)}}$, $C_2={\blue 2\pi^{-2}}(Ra+\wt{Ra}/Pr)^2$.

Then the system \eqref{Bous1}--\eqref{Bous2} has a unique invariant measure $\pi$. Further,  there exist constants $C_3>0$, $r>0$ such that
\begin{equation*}
W_{\|\cdot\|_{H}\wedge1}(\Law (\u^\bx(t),T^\bx(t)),\pi) \le C_3 (1+\| \bx\|^2_{H})e^{-rt },\quad t\ge0,\,\bx\in H.
\end{equation*}
\end{Theorem}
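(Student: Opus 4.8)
The plan is to verify Assumptions \textbf{H1}--\textbf{H3} of the general toolkit together with the Lyapunov condition \eqref{Llyap}, and then invoke Theorem~\ref{T:lyapSPDE}; recall that the Feller property of $(\u,T)$ on $H$ is already available from \cite{FGRW16}. As in all the previous SPDE examples, the generalized coupling is built by adding a low-mode feedback control: take $\bX^{\bx,\by}:=(\u^\bx,T^\bx)$ and let $\bY^{\bx,\by}=(\wt\u,\wt T)$ solve the same system \eqref{Bous1}--\eqref{Bous2} started from the second initial point, but with the extra drift terms $\tfrac{Pr}{2}\lambda_{N_1+1}P^{H_1}_{N_1}(\u^\bx-\wt\u)\,dt$ in the velocity equation and $\tfrac12\mu_{N_2+1}P^{H_2}_{N_2}(T^\bx-\wt T)\,dt$ in the temperature equation (the factor $Pr$ reflecting the $1/Pr$ in front of $d\u$). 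Since the temperature carries inhomogeneous Dirichlet data, I would first subtract the stationary conduction profile $\bar T(z):=\wt{Ra}(1-z_2)$, so that the shifted temperatures and their difference obey homogeneous boundary conditions, the Poincar\'e inequality applies with the sharp constant on the unit-width strip, and the extra linear terms produced by the shift are controlled routinely.

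For \textbf{H1} I would set $q(\bx,\by):=\|\u_1-\u_2\|_{H_1}^2+\theta_0\|T_1-T_2\|_{H_2}^2$ with a weight $\theta_0>0$ (depending on $Pr$, $Ra$) tuned so that the buoyancy cross term $Ra\,\mathbf i_2(T_1-T_2)$ tested against $\u_1-\u_2$ is absorbed by the dissipation of both components; this is precisely where the condition $\wt{Ra}\,Ra<\pi^2\sqrt2-1$ enters, after the Ladyzhenskaia-type bound in the spirit of \eqref{SobolevNS} is used to dominate the advective terms $(\u\cdot\nabla)\u$ and $(\u\cdot\nabla)T$ by products of $\|\cdot\|_H$- and $\|\cdot\|_V$-norms. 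An It\^o computation on $q$, the Poincar\'e inequality applied to the low modes killed by $P^{H_1}_{N_1}$ and $P^{H_2}_{N_2}$, and the Gronwall inequality then yield \eqref{diss_abstract} with $S(\bx):=\|\u\|_{V_1}^2+\theta_0\|T\|_{V_2}^2$ and $\zeta,\kappa$ fixed. For \textbf{H2} I would take $U(\bx):=\|\u\|_{H_1}^2+\theta_0\|T-\bar T\|_{H_2}^2$ and invoke the energy estimates of \cite{FGRW16}; the martingale bound \eqref{martingale_abstract} follows from $\|\bsigma\|_{H_1}$, $\|\rho\|_{H_2}$ and the Poincar\'e inequality, and the crucial inequality \eqref{condtheta} is exactly what the spectral gap conditions \eqref{granicaBous}--\eqref{granicaBous2} are designed to guarantee (this pins down the constants $C_1,C_2$ in the statement). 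For \textbf{H3} I would use Lemma~\ref{L:pomoshH3}: the added drifts can be written as Girsanov shifts of the driving Brownian motions $W$ and $B$, and the span conditions $\P_{N_1}^{H_1}H_1\subset\Span(\bsigma_k)$, $\P_{N_2}^{H_2}H_2\subset\Span(\rho_k)$ make the pseudo-inverses of $\sigma$ and $\rho$ bounded on the relevant finite-dimensional ranges, so that $|\beta_s|^2\le c\,q(\bX^{\bx,\by}_s,\bY^{\bx,\by}_s)$ holds pointwise; uniqueness of the strong solution then supplies the measurable-functional representation the lemma requires.

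Finally I would take the Lyapunov function $V(\bx):=\|\u\|_{H_1}^2+\|T-\bar T\|_{H_2}^2$, which is comparable to $\|\bx\|_H^2$ up to additive and multiplicative constants, verify \eqref{Llyap} from the same energy estimates of \cite{FGRW16} together with the Poincar\'e inequality, note that $U$ and $q$ are bounded on $\{V\le M\}$ and on $\{V\le M\}\times\{V\le M\}$ by their very definitions, and observe $\rho=\|\bx-\by\|_H\le q(\bx,\by)^{1/2}$ so that the hypothesis $\rho\le q^\delta$ of Theorem~\ref{T:lyapSPDE} holds with $\delta=1/2$. Theorem~\ref{T:lyapSPDE} then yields a unique invariant measure $\pi$ and the claimed exponential rate in $W_{\|\cdot\|_H\wedge1}$ with the bound $C_3(1+\|\bx\|_H^2)e^{-rt}$.

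The main obstacle I expect is the bookkeeping in \textbf{H1}: one must simultaneously choose the weight $\theta_0$ and arrange the difference inequality so that (i) the advective nonlinearities in both equations are dominated via the Ladyzhenskaia trick, (ii) the buoyancy coupling is absorbed at the cost of the explicit threshold $\wt{Ra}\,Ra<\pi^2\sqrt2-1$, and (iii) after removing the low modes with the two projections one is still left with a genuinely negative drift coefficient $-\zeta$ satisfying $\zeta>\kappa b/\mu$ in \eqref{condtheta} --- and all three requirements have to hold at once, which is what dictates the precise form of $C_1$ and $C_2$. The mixed (periodic/inhomogeneous Dirichlet) boundary conditions add a further, though routine, layer to this, since the shift by $\bar T$ must be carried consistently through every estimate.
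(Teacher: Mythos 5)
Your overall route is the same as the paper's: shift the temperature by the conduction profile $\wt{Ra}(1-z_2)$, build the generalized coupling by adding low-mode feedback controls $\tfrac12\lambda_{N_1+1}P^{H_1}_{N_1}(\u^\bx-\wt\u)$ and $\tfrac12\mu_{N_2+1}P^{H_2}_{N_2}(\Theta^\bx-\wt\Theta)$, verify \textbf{H1}--\textbf{H3} plus \eqref{Llyap}, and conclude via Theorem~\ref{T:lyapSPDE} with Lemma~\ref{L:pomoshH3} handling \textbf{H3}. The choice of a weighted premetric $q$ with a tunable $\theta_0$ is a harmless variant of the paper's unweighted $\|\cdot\|_H$-difference (the paper instead puts the weights $1/Pr$ and $Ra^2$ into $U$, matching the energy estimates of \cite{FGRW16} multiplied by $1/Pr$ and $Ra^2$).

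There is, however, a concrete misallocation in your plan of where the hypothesis $\wt{Ra}\,Ra<\pi^2\sqrt2-1$ is used, and it matters for obtaining the constants actually appearing in the statement. In the paper's proof the buoyancy/shift cross terms in the \emph{difference} estimate (\textbf{H1}) are \emph{not} absorbed by smallness of $\wt{Ra}\,Ra$; they produce the term $\tfrac{(\wt{Ra}+Pr\,Ra)^2}{\pi^2 Pr}\|\Theta^\bx-\wt\Theta\|_{H_2}^2$ in \eqref{Bousdiff}, which is absorbed into the low-mode damping $\mu_{N_2+1}$ --- this is exactly the origin of the additive term $C_2\,Pr$ in \eqref{granicaBous2}. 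The smallness condition is instead forced by \textbf{H2}: the energy estimate concerns the \emph{uncontrolled} solution $\bX^{\bx,\by}=(\u^\bx,\Theta^\bx)$, where no projection terms are available, and after the temperature shift the $\wt{Ra}\,u_2$ term couples with the buoyancy term so that the combined estimate \eqref{BouseeuTheta} has temperature dissipation coefficient $Ra^2\bigl(2-\pi^{-4}(1+Ra\wt{Ra})^2\bigr)$; the condition $\wt{Ra}\,Ra<\pi^2\sqrt2-1$ is precisely what makes this positive, and this quantity (capped by $1$) is the denominator of $C_1$ through \eqref{condtheta}. Your plan absorbs the cross terms in \textbf{H1} by smallness and merely ``invokes'' the energy estimates for \textbf{H2}; carried out that way you would obtain an exponential ergodicity statement, but with different admissible noise conditions --- in particular the $C_2\,Pr$ term would not arise as in \eqref{granicaBous2}, and the stated form of $C_1$ (with $\mu=1\wedge\bigl(Ra^2(2-\pi^{-4}(1+Ra\wt{Ra})^2)\bigr)$ in the denominator) would not be reproduced. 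To prove the theorem as stated you need the paper's allocation: smallness of $\wt{Ra}\,Ra$ for positivity of $\mu$ in \textbf{H2}, and largeness of $\mu_{N_2+1}$ (the $C_2\,Pr$ term) to neutralize the cross coupling in \textbf{H1}.
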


\begin{Remark}
Note that conditions \eqref{granicaBous}--\eqref{granicaBous2} are satisfied in the following two different scenarios. The first case is when the noise acts both on the velocity $\u$ and temperature $T$ (i.e, both $N_1$ and $N_2$ are large enough). The second case is when the  noise acts only on temperature (i.e., $N_2$ is large enough). Then the system is ergodic provided that the Prandtl number is large enough (and $N_1$ can be small, in particular equal to $0$).
\end{Remark}

\begin{proof}[Proof of Theorem~\ref{T:Bous}]
First, as in \cite{FGRW16}, we introduce a shifted temperature process
$$
\Theta(t,z):=T(t,z)-\wt {Ra} (1-z_2),\quad t\ge0,\,\, z\in D.
$$
It is clear from the definition that $\Theta$ is periodic in $z_1$ and satisfies homogeneous boundary conditions $\Theta\evalat{z_2=0}=\Theta\evalat{z_2=1}=0$.

Let us check that all the conditions of Theorem~\ref{T:lyapSPDE} are satisfied for the Markov process $(\u, \Theta)$. Since $\Theta$ is just $T$ shifted by some non-random function, which is constant in time, exponential ergodicity of the Markov process  $(\u, \Theta)$ would imply exponential ergodicity of $(\u, T)$.

Take
$$
U(\bx):=\frac{1}{Pr}\|(x_1,x_2)\|_{H_1}^2+Ra^2\|x_3\|_{H_2}^2,\quad\! S(\bx):=\|\grad \bx\|, \quad\! q(\bx,\by):=\|\bx-\by\|_{H},\quad \bx,\by\in H.
$$
We use the coupling construction, which is a small modification of the construction from \cite[Section~5.2]{FGRW16}. Fix $\bx,\by\in H$. We take $\bX^{\bx,\by}:=(\u^\bx,\Theta^\bx)$ and  let  $\bY^{\bx,\by}:=(\wt \u,\wt \Theta)$ be the solution to the same system of  equations with the initial condition $\by$ and the additional control terms:
\begin{align*}
&\frac{1}{Pr}(d\wt \u+(\wt\u\cdot  \nabla)\wt\u dt)=\Delta\wt\u dt +Ra\hspace{0.1em} \mathbf{i}_2\hspace{0.1em} \wt \Theta dt -\nabla \wt p dt+\frac12\lambda_{N_1+1}P^{H_1}_{N_1}(\u^{\blue \bx}-\wt \u)dt+\sum_{k=1}^{m_1} \bsigma_kdW^k,\\
&d \wt\Theta + (\wt\u\cdot  \nabla) \wt\Theta dt=\wt {Ra} \wt u_2dt + \Delta \wt\Theta dt +
\frac12\mu_{N_2+1}P^{H_2}_{N_2}(\Theta^{\blue \bx}-\wt \Theta)dt+
\sum_{k=1}^{m_2} \rho_kdB^k,\\
&(\wt \u(0), \wt\Theta(0))=\by,\,\,\nabla\cdot \wt \u=0,\nn\\
&\wt \u\evalat{z_2=0}=\wt\u\evalat{z_2=1}=0,\quad \wt \Theta \evalat{z_2=0}=\wt \Theta\evalat{z_2=1}=0,\quad \text{$\wt\u, \wt \Theta$ are periodic in $z_1$}.
\end{align*}

Then it follows from \cite[formulae (5.9)--(5.12)]{FGRW16} that
\begin{align}\label{Bousdiff}
&\frac12\frac{d}{dt} (\|\u^\bx-\wt \u\|_{H_1}^2+\|\Theta^\bx-\wt \Theta\|_{H_2}^2) +\frac{Pr}2\lambda_{N_1+1}
\|\u^\bx-\wt \u\|_{H_1}^2+\frac12\mu_{N_2+1} \|\Theta^\bx-\wt \Theta\|_{H_2}^2\nn\\
&\qquad \le \frac{(\wt {Ra} + Pr Ra)^2}{{\blue\pi^2}Pr}\|\Theta^\bx-\wt \Theta\|_{H_2}^2\nn\\
&\phantom{\qquad\le}+\frac{ C_4}{\sqrt {Pr}}  (\|\u^\bx-\wt \u\|_{H_1}^2+\|\Theta^\bx-\wt \Theta\|_{H_2}^2)( \|\nabla\u^\bx\|_{H_1}^2+\|\nabla\Theta^\bx\|_{H_2}^2),
\end{align}
where $C_4\!:={\blue (1+1/\pi)(1/\sqrt2+8/\sqrt{Pr})}$. Here we have also used the Poincar\'e inequality  {\blue $\|\nabla \phi\|_{L_2}\ge\pi\| \phi\|_{L_2}$} and the Sobolev embedding {\blue $\|{\phi}\|_{L_4}^4\le 2(1+1/\pi)\|{\phi}\|_{L_2}^2\| \nabla{\phi}\|_{L_2}^2$}; both are valid for any $\phi\in V_2$.

Inequality \eqref{Bousdiff} and the Gronwall inequality imply for any $t\ge0$
\begin{equation*}
q(\bX^{\bx,\by}(t), \bY^{\bx,\by}(t))\le q(\bx,\by)\exp\bigl(-\zeta t +\frac{{\blue 2 C_4}}{\sqrt {Pr}}\int_0^t \|\nabla\u^\bx(s)\|_{H_1}^2+\|\nabla\Theta^\bx(s)\|_{H_2}^2\,ds  \bigr),
\end{equation*}
where we put $\zeta:=(Pr\lambda_{N_1+1})\wedge (\mu_{N_2+1}- \frac{2(\wt {Ra} + Pr Ra)^2}{{\blue\pi^2}Pr})$. Therefore assumption \textbf{H1} is satisfied.

To verify Assumption \textbf{H2} we use
%
the corresponding energy estimates for $\u^\bx$ and $\Theta^\bx$ (see  \cite[formulae (3.32) and (3.35)]{FGRW16}). We multiply the first of these estimates by $1/Pr$, the second one by $Ra^2$ and  get
\begin{align}\label{BouseeuTheta}
&\frac1{Pr}d\|\u^\bx(t) \|^2_{H_1}\!+\!Ra^2d\|\Theta^\bx(t) \|^2_{H_2}\!+\! \|\nabla\u^\bx(t)\|_{H_1}^2dt +Ra^2\bigl({\blue 2\!-\!\frac{(1+Ra\wt{Ra})^2}{\pi^4}} \bigr)\|\nabla\Theta^\bx(t)\|_{H_2}^2dt \nn\\
&\qquad\le (Pr  \|\bsigma\|^2_{H_1}+Ra^2\|\rho\|^2_{H_2}) dt + d M(t),
\end{align}
where  $M$ is a continuous local martingale with  $M(0)=0$; its quadratic variation is  bounded by
$$
d\langle M\rangle_t\leq 4  (\|\bsigma\|^2_{H_1} +Ra^{\blue4}\|\rho\|^2_{H_2})(\|\u^\bx(t) \|^2_{H_1}+\|\Theta^\bx(t) \|^2_{H_2})dt.
$$

%
Inequality \eqref{BouseeuTheta} yields
\begin{equation*}
d U(\bX^{\bx,\by}(t))+\mu S(\bX^{\bx,\by}(t))dt \le (Pr  \|\bsigma\|^2_{H_1}+Ra^2\|\rho\|^2_{H_2}) dt + d M(t),
\end{equation*}
where $\mu:={\blue 1\wedge \bigl(Ra^2(2-\frac{(1+Ra\wt{Ra})^2}{\pi^4})\bigr)}$; this implies \textbf{H2}. Inequality  \eqref{condtheta} follows from \eqref{granicaBous} and \eqref{granicaBous2}.

Assumption \textbf{H3} follows from the uniqueness of a strong solution to \eqref{Bous1}--\eqref{Bous2} by the same argument as in the proofs of Theorem~\ref{tNS} and \ref{T:2dhns}.

Finally we consider the Lyapunov function
$$
V(\bx):=U(\bx),\quad  \bx\in H.
$$
We immediately see by definition, that for any $M>0$ the functions $U$ and $q$ are bounded on the level sets $\{V\le M\}$ and $\{V\le M\}\times\{V\le M\}$, respectively. Inequality \eqref{Llyap} follows directly from the energy estimate \eqref{BouseeuTheta} and the Poincar\'e inequality.

Thus all the conditions of Theorem~\ref{T:lyapSPDE} hold. This immediately implies the statement of the theorem.
\end{proof}

\appendix
\section*{Appendix. \,\,Distance between the law of an  It\^o process and the Wiener measure}
\renewcommand{\theequation}{A.\arabic{equation}}
\setcounter{equation}{0}

\renewcommand{\theTheorem}{A.\arabic{Theorem}}
\setcounter{Theorem}{0}
In the Appendix we provide useful bounds on the various distances between  the law of an  It\^o process and the Wiener measure under different sets of conditions. We use these bounds throughout the paper, however we believe that they are also of independent interest.

We begin by recalling that for a pair of probability measures $\mu\ll \nu$ over a measurable space $(X,\mathcal{X})$  \emph{the  Kullback--Leibler $($KL--$)$ divergence of $\mu$ from $\nu$}  is defined by
$$
D_{KL}(\mu\|\nu):=\int_X \log {\frac{\dd \mu}{\dd \nu}}\, \dd \mu=\int_X \frac{\dd \mu}{ \dd \nu}\log\Bigl(\frac{\dd \mu}{\dd \nu}\Bigr)\, \dd \nu.
$$
If a measure $\mu$ is not absolutely continuous with respect to $\nu$,  then for convenience we  put ${D_{KL}(\mu\|\nu):=+\infty}$.  KL--divergence is a stronger measure of difference between probability distributions than the total variation distance; they are connected in the following way.

\begin{Lemma}\label{l_TV_via_KL} Let $\mu$ and $\nu$ be probability measures over $(X,\mathcal{X})$. Then
\begin{align}
\label{otzenka2}
&d_{TV}(\mu,\nu)\le \sqrt{{\frac12} D_{KL}(\mu\|\nu)};\\
\label{otzenka}
&d_{TV}(\mu,\nu)\le 1-\frac12 e^{-D_{KL}(\mu\|\nu)};\\
\label{log-Scheffe}
&\int_X \Bigl(\log {\frac{\dd \mu}{\dd \nu}}\Bigr)_+d \mu\le  D_{KL}(\mu\|\nu)+\log 2.
\end{align}
Further, for any $N>1$ and any set $A\in\mathcal{X}$,
\begin{equation}
\label{diffbound}
\nu(A)\ge\frac1N \mu(A)-\frac{D_{KL}(\mu\|\nu)+\log 2}{N\log N}.
\end{equation}

\end{Lemma}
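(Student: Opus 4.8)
The plan is to prove the four bounds in turn: \eqref{otzenka2} is Pinsker's inequality, \eqref{otzenka} is a form of the Bretagnolle--Huber inequality, and \eqref{log-Scheffe} and \eqref{diffbound} will follow quickly from the previous estimates and one elementary pointwise bound. In every case we may assume $\mu\ll\nu$ and $D_{KL}(\mu\|\nu)<\infty$, since otherwise the right-hand side of the asserted inequality is $+\infty$ (or $1$, in the case of \eqref{otzenka}) and the bound is trivial. Write $p:=\dd\mu/\dd\nu\ge0$, so $\int p\,\dd\nu=1$, $d_{TV}(\mu,\nu)=\tfrac12\int|p-1|\,\dd\nu=1-\int(1\wedge p)\,\dd\nu$, and $D_{KL}(\mu\|\nu)=\int p\log p\,\dd\nu=\int\log p\,\dd\mu$.

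For \eqref{otzenka2} I would use the classical two-point reduction. Set $A:=\{p\ge1\}$, $P:=\mu(A)$, $Q:=\nu(A)$, so $d_{TV}(\mu,\nu)=P-Q$ with $0\le Q\le P\le1$. Applying Jensen's inequality to the convex function $t\mapsto t\log t$ on each cell of the partition $\{A,A^c\}$ (relative to the normalized restrictions of $\nu$) gives the data-processing bound $D_{KL}(\mu\|\nu)\ge P\log\tfrac PQ+(1-P)\log\tfrac{1-P}{1-Q}=:\delta(P\|Q)$. It then remains to check the scalar inequality $\delta(P\|Q)\ge2(P-Q)^2$ for $0\le Q\le P\le1$: fixing $P$ and differentiating in $Q$ one finds $\partial_Q\bigl(\delta(P\|Q)-2(P-Q)^2\bigr)=(P-Q)\bigl(4-\tfrac1{Q(1-Q)}\bigr)\le0$, since $Q(1-Q)\le\tfrac14$, and the bracketed expression vanishes at $Q=P$, hence is nonnegative throughout $[0,P]$. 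This gives $2\,d_{TV}(\mu,\nu)^2\le D_{KL}(\mu\|\nu)$, i.e.\ \eqref{otzenka2}.

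For \eqref{otzenka} I would argue via the Bhattacharyya affinity $\alpha:=\int\sqrt p\,\dd\nu$. Writing $\sqrt p=\sqrt{1\wedge p}\,\sqrt{1\vee p}$ and applying the Cauchy--Schwarz inequality, together with $\int(1\wedge p)\,\dd\nu+\int(1\vee p)\,\dd\nu=\int(1+p)\,\dd\nu=2$, one obtains $\alpha^2\le\int(1\wedge p)\,\dd\nu\cdot\int(1\vee p)\,\dd\nu\le2\int(1\wedge p)\,\dd\nu=2\bigl(1-d_{TV}(\mu,\nu)\bigr)$. On the other hand, by Jensen's inequality, $\alpha=\int p^{-1/2}\,\dd\mu=\int e^{-\frac12\log p}\,\dd\mu\ge e^{-\frac12\int\log p\,\dd\mu}=e^{-\frac12 D_{KL}(\mu\|\nu)}$. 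Combining, $e^{-D_{KL}(\mu\|\nu)}\le 2\bigl(1-d_{TV}(\mu,\nu)\bigr)$, which rearranges to \eqref{otzenka}.

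For \eqref{log-Scheffe} the plan is to write $\log p=(\log p)_+-(\log(1/p))_+$ and integrate against $\mu$, getting $\int(\log p)_+\,\dd\mu=D_{KL}(\mu\|\nu)+\int(\log(1/p))_+\,\dd\mu$; and, using $\mu(\{p=0\})=0$, $\int(\log(1/p))_+\,\dd\mu=\int_{\{0<p<1\}}p\log(1/p)\,\dd\nu\le\sup_{0<s\le1}(-s\log s)=e^{-1}\le\log2$, which is \eqref{log-Scheffe}. For \eqref{diffbound}, fix $N>1$ and $A\in\mathcal X$: on $A\cap\{p<N\}$ we have $\dd\mu\le N\,\dd\nu$, so $\nu(A)\ge\tfrac1N\mu(A\cap\{p<N\})\ge\tfrac1N\mu(A)-\tfrac1N\mu(\{p\ge N\})$, while $\log p\ge\log N>0$ on $\{p\ge N\}$ gives, by Markov's inequality and \eqref{log-Scheffe}, $\mu(\{p\ge N\})\le\tfrac1{\log N}\int(\log p)_+\,\dd\mu\le\tfrac{D_{KL}(\mu\|\nu)+\log2}{\log N}$; substituting proves \eqref{diffbound}. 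The only genuinely non-formal ingredient in this programme is the scalar estimate $\delta(P\|Q)\ge2(P-Q)^2$ (together with the cell-wise Jensen step that produces $\delta(P\|Q)$) underlying Pinsker's inequality; the rest is Cauchy--Schwarz, Jensen's inequality, the bound $-s\log s\le e^{-1}$ on $(0,1]$, and routine handling of the degenerate cases $\{p=0\}$ and $D_{KL}=\infty$.
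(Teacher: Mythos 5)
Your proposal is correct, and the difference from the paper lies mainly in the first two inequalities: the paper does not prove \eqref{otzenka2} and \eqref{otzenka} at all, but cites them from Tsybakov's book (Pinsker's inequality, \cite[Lemma~2.5(i)]{Ts}, and formula (2.25) there, i.e.\ the Bretagnolle--Huber bound), whereas you give self-contained proofs --- Pinsker via the two-point data-processing reduction to the scalar inequality $P\log\frac PQ+(1-P)\log\frac{1-P}{1-Q}\ge 2(P-Q)^2$ checked by differentiation in $Q$, and \eqref{otzenka} via the Bhattacharyya affinity $\int\sqrt{p}\,d\nu$ combined with Cauchy--Schwarz and Jensen. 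Both arguments are the standard ones and are sound, including your handling of the degenerate cases ($\mu\not\ll\nu$, $D_{KL}=\infty$, cells of zero $\nu$-measure). For \eqref{log-Scheffe} and \eqref{diffbound} your route essentially coincides with the paper's: the same splitting of $\log p$ into positive and negative parts, and the same truncation at level $N$ plus Markov's inequality applied to $(\log p)_+$. The two minor variations are: to bound the negative-part integral you use the pointwise estimate $-s\log s\le e^{-1}$ on $(0,1]$, which even gives the slightly sharper constant $e^{-1}$ in place of $\log 2$, while the paper uses Jensen's inequality in the form $\int(\log f)_-\,d\mu\le\log\int\max(f^{-1},1)\,f\,d\nu\le\log 2$; and in \eqref{diffbound} you argue directly from $\mu\le N\nu$ on $\{p<N\}$, which is marginally cleaner than the paper's opening identity $\nu(A)=\int_A\frac{d\nu}{d\mu}\,d\mu$, since absolute continuity of $\nu$ with respect to $\mu$ is not assumed. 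In short, your version buys self-containedness (and a slightly better constant that you do not need), at the cost of length; the paper's buys brevity by outsourcing the two classical information-theoretic inequalities.
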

\begin{proof} We can consider only the case when $\mu\ll \nu$; otherwise ${D_{KL}(\mu\|\nu):=+\infty}$ and all the bounds trivially hold. \eqref{otzenka2} is the classical Pinsker inequality (see, e.g., \cite[Lemma~2.5.(i)]{Ts}).  Inequality \eqref{otzenka} is \cite[formula~(2.25)]{Ts}. To establish \eqref{log-Scheffe} denote $f:=\frac{\dd \mu}{\dd \nu}$. Then
$$
\int_X(\log f)_+\dd \mu=D_{KL}(\mu\|\nu)+\int_X(\log f)_-\dd \mu
$$
and
\begin{align*}
\int_X(\log f)_-\dd \mu&\leq \log\Bigl( \int_X e^{(\log f)_-}\dd \mu\Bigr)=\log\Bigl( \int_X \max\big(f^{-1}, 1\big) f\,\dd \nu\Bigr)
\\&\leq \log\bigl( \int_X (f+1) \dd \nu\bigr)=\log 2.
\end{align*}
This implies \eqref{log-Scheffe}.

To prove \eqref{diffbound} we fix $N>1$ and a measurable set $A$. We have
\begin{align*}
\nu(A)&= \int_A \frac{d\nu}{d\mu}d\mu\nn\\
&\ge \frac1N\int_A \I(\frac{d\nu}{d\mu}\ge\frac1N)d\mu\nn\\
&\ge\frac1N \mu(A)-\frac1N\int_{X}\I(\frac{d\mu}{d\nu}\ge N)d\mu\nn\\
&\ge\frac1N \mu(A)-\frac1{N\log N}\int_{X}\log\bigl(\frac{d\mu}{d\nu}\bigr)_+d\mu.
\end{align*}
This combined with \eqref{log-Scheffe} yields \eqref{diffbound}.
\end{proof}

Consider now a $d$-dimensional ($d\in\N$)  It\^o process $(\xi_t)_{t\ge 0}$ with $\xi_0=0$ and
\begin{equation}\label{Ito_process}
\dd \xi_t=\beta_t\dd t+\dd W_t, \quad t\ge0,
\end{equation}
where $W$ is a Wiener process in $\R^d$, and $(\beta_t)_{t\ge 0}$ is  a {\blue progressively measurable  process.}
Denote by $\mu_\xi$ the law of the process $\xi$ in $\C([0,\infty), \R^d)$, and by  $\mu_W$ the law of $W$; the latter will be also called the \emph{Wiener measure} on $\C([0,\infty), \R^d)$.

 \begin{Theorem}\label{tKLGirsanov}
 $$
 D_{KL}(\mu_\xi\|\mu_W)\leq {\frac12}\E\int_0^\infty |\beta_t|^2\, \dd t.
 $$
 \end{Theorem}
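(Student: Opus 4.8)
The plan is to deduce the bound from Girsanov's theorem after a standard localization, transporting the relative entropy computation from the underlying probability space to path space via the data processing inequality. If $\E\int_0^\infty|\beta_t|^2\,\dd t=+\infty$ there is nothing to prove, so I assume $c:=\tfrac12\E\int_0^\infty|\beta_t|^2\,\dd t<\infty$ and write $(\F_t)_{t\ge0}$, $\F_\infty:=\bigvee_t\F_t$, for the filtration on $(\Omega,\F_\infty,\P)$ with respect to which $W$ is a Brownian motion and $\beta$ is progressively measurable.

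First I would truncate the drift: set $\tau_n:=\inf\{t\ge0:\int_0^t|\beta_s|^2\,\dd s\ge n\}$, $\beta^{(n)}_s:=\beta_s\I(s<\tau_n)$, so that $\int_0^\infty|\beta^{(n)}_s|^2\,\dd s\le n$ a.s., and $\xi^{(n)}_t:=W_t+\int_0^t\beta^{(n)}_s\,\dd s$. The bound on $\int_0^\infty|\beta^{(n)}|^2$ makes Novikov's condition trivial, so $D^{(n)}_t:=\exp\bigl(-\int_0^t\beta^{(n)}_s\cdot\dd W_s-\tfrac12\int_0^t|\beta^{(n)}_s|^2\,\dd s\bigr)$ is a uniformly integrable martingale with $D^{(n)}_\infty>0$ a.s.\ and $\E D^{(n)}_\infty=1$. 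Let $\Q^{(n)}$ be the probability measure on $(\Omega,\F_\infty)$ with $\dd\Q^{(n)}/\dd\P=D^{(n)}_\infty$; by Girsanov's theorem $\xi^{(n)}$ is a standard $\R^d$-valued Brownian motion under $\Q^{(n)}$, so its $\Q^{(n)}$-law on $\C([0,\infty),\R^d)$ is exactly $\mu_W$. Since $D^{(n)}_\infty>0$ a.s.\ we also have $\dd\P/\dd\Q^{(n)}=1/D^{(n)}_\infty$, and since $\bigl(\int_0^t\beta^{(n)}_s\cdot\dd W_s\bigr)_t$ is an $L^2$-bounded, hence centred, $\P$-martingale,
\[
D_{KL}(\P\,\|\,\Q^{(n)})=\E\bigl[-\log D^{(n)}_\infty\bigr]=\tfrac12\,\E\int_0^\infty|\beta^{(n)}_s|^2\,\dd s\le c .
\]
The measurable map $\omega\mapsto\xi^{(n)}(\omega)\in\C([0,\infty),\R^d)$ sends $\P$ to $\Law(\xi^{(n)})$ and $\Q^{(n)}$ to $\mu_W$, so by the data processing inequality for the Kullback--Leibler divergence (which does not increase under pushforward by a measurable map, e.g.\ by its variational representation) I get $D_{KL}(\Law(\xi^{(n)})\,\|\,\mu_W)\le c$ for every $n$.

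It then remains to let $n\to\infty$. Because $\int_0^\infty|\beta_s|^2\,\dd s<\infty$ a.s., we have $\tau_n\uparrow\infty$ a.s., hence $\xi^{(n)}\to\xi$ a.s.\ in $\C([0,\infty),\R^d)$ (uniformly on compact time intervals), and therefore $\Law(\xi^{(n)})\to\mu_\xi$ weakly. Since $(\mu,\nu)\mapsto D_{KL}(\mu\,\|\,\nu)$ is jointly lower semicontinuous for the weak topology on the Polish space $\C([0,\infty),\R^d)$ and $\mu_W$ is fixed,
\[
D_{KL}(\mu_\xi\,\|\,\mu_W)\le\liminf_{n\to\infty}D_{KL}(\Law(\xi^{(n)})\,\|\,\mu_W)\le c=\tfrac12\,\E\int_0^\infty|\beta_t|^2\,\dd t ,
\]
which is the assertion (in particular finiteness of the right-hand side forces $\mu_\xi\ll\mu_W$).

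The main obstacle is conceptual rather than computational: $\beta$ is only progressively measurable with respect to $(\F_t)$ and need not be a functional of the path of $\xi$ alone, so one cannot hope for equality here (the right-hand side is in general a strict overestimate) and cannot write $\dd\mu_\xi/\dd\mu_W$ as an explicit path functional of $\xi$ --- this is exactly why the argument goes through a relative-entropy identity on $\Omega$ followed by data processing, rather than through an explicit Cameron--Martin--Girsanov density on path space. The only genuinely technical points are the two flagged above: Novikov (hence uniform integrability of $D^{(n)}$, needed to make $\Q^{(n)}$ a probability measure and to apply Girsanov) forces the $\tau_n$-truncation, and the limit $n\to\infty$ rests on lower semicontinuity of the Kullback--Leibler divergence. (Alternatively one could first argue on each finite horizon $[0,T]$ and then let $T\to\infty$, using that restriction to a sub-$\sigma$-algebra only decreases the divergence.)
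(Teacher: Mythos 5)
Your proof is correct, and its skeleton coincides with the paper's: the same truncation times $\tau_n$, Novikov--Girsanov for the truncated drift, a Jensen-type inequality to pass from the exponential density to the law of the process, and a limit in $n$. The differences are in how the two technical steps are packaged. For fixed $n$, the paper works directly on path space, using the identity $\frac{\dd\mu_W}{\dd\mu_{\xi^n}}(\xi^n)=\E[\mathcal{E}\mid\sigma(\xi^n)]$ from Liptser--Shiryaev and then conditional Jensen; you instead compute $D_{KL}(\P\|\Q^{(n)})=\E[-\log D^{(n)}_\infty]$ on $\Omega$ and push down to path space by the data processing inequality. These are the same inequality in different clothing (data processing \emph{is} conditional Jensen applied to $\sigma(\xi^n)$), and both correctly handle the point you flag at the end --- that $\beta$ need not be adapted to $\xi$, so no explicit path-space density is available --- which is exactly the paper's Remark A.3. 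For the limit $n\to\infty$, the paper notes $d_{TV}(\mu_{\xi^n},\mu_\xi)\le\P(\tau_n<\infty)\to0$, deduces $L_1(\mu_W)$-convergence of the densities $f_n\to f$ (using $\mu_\xi\ll\mu_W$ from Liptser--Shiryaev, Theorem 7.4), and applies Fatou to $f_n\log f_n$; you instead use a.s.\ locally uniform convergence $\xi^{(n)}\to\xi$ and joint lower semicontinuity of the Kullback--Leibler divergence under weak convergence. Your route is slightly more economical in that it does not presuppose $\mu_\xi\ll\mu_W$ (absolute continuity comes out as a by-product of finiteness), at the cost of invoking the lower semicontinuity of relative entropy, which is standard but should be justified or cited (e.g.\ via the Donsker--Varadhan variational formula, essentially the representation you allude to). Either finishing argument is sound.
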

 {\blue
 This result is not new,  it is essentially \cite[Theorem 8]{Us} up to a minor difference which will be discussed later in Remark \ref{rA30}. However, apparently it is not widely known, hence for the convenience of the reader we give its proof together with a short discussion. } Recall that the Girsanov theorem  (see, e.g., \cite[Theorem~6.2]{LS}) states that $\xi$ is a Wiener process w.r.t. the new probability measure
 $\dd \Q:=\mathcal{E}\dd \P$ with
 $$
 \mathcal{E}:=\exp\left(-\int_0^\infty \beta_t\, \dd W_t-{\frac12}\int_0^\infty|\beta_t|^2\, \dd t\right)
 $$
under the assumption that
\begin{equation}\label{Girsanov_one}
 \E  \mathcal{E}=1.
 \end{equation}{\blue
To verify this assumption, the \emph{Novikov condition} is most commonly used:}
\begin{equation}\label{beta_Novikov}
\E\exp\left({\frac12}\int_0^\infty|\beta_t|^2\, \dd t\right)<\infty.
\end{equation}
This condition is non-improvable in the sense that the similar condition
$$
\E\exp\left(\left({\frac12}-\eps\right)\int_0^\infty|\beta_t|^2\, \dd t\right)<\infty
$$
with any $\eps>0$ is not sufficient for \eqref{Girsanov_one}; see \cite[Example 6, Chapter 6.2]{LS}.  On the other hand,  under a much weaker condition
$$
\P\Bigl(\int_0^\infty|\beta_t|^2\, \dd t<\infty\Bigr)=1
$$
{\blue the absolute continuity  $\mu_\xi\ll \mu_W$ holds (see, e.g., \cite[Theorem~7.4]{LS}), though not much information about the Radon-Nikodym density $\dd \mu_\xi/\dd\mu_W$ is available. Theorem~\ref{tKLGirsanov} has the same virtue with the latter result: under a weak condition
\be\label{beta_square}
\E \int_0^\infty|\beta_t|^2\, \dd t<\infty
\ee
a bound for the KL-divergence of $\mu_\xi$ from $\mu_W$ is given without specifying $\dd \mu_\xi/\dd\mu_W$. The proof is  based on a similar localization argument.
}

 \begin{proof}[Proof of Theorem~\ref{tKLGirsanov}]
We begin by observing that if condition \eqref{beta_square} is not satisfied, then the statement of the theorem trivially holds. Thus from now on we can assume  \eqref{beta_square}.

 Assume first that additionally \eqref{beta_Novikov} holds true. Then \eqref{Girsanov_one} is satisfied, see, e.g., \cite[Theorem 6.1]{LS}. Hence  $\mu_\xi\sim\mu_W$, and the function
 $$
 p(x):=\frac{\dd \mu_W}{\dd \mu_\xi}(x), \quad x\in \C([0,\infty), \R^d)
 $$
 satisfies
\begin{equation*}
p(\xi)=\E[ \mathcal{E}| \sigma(\xi)];
\end{equation*}
see \cite[Theorem~7.3]{LS}. Therefore
\begin{align}\label{Jensen}
D_{KL}(\mu_\xi\|\mu_W)&=-\int_{\C([0,\infty))} \log\frac{\dd \mu_W}{\dd \mu_\xi}\, \dd \mu_\xi=-\int_{\C([0,\infty))} \log p \,\dd \mu_\xi\nn \\
&=-\E\log p(\xi) =-\E\log\bigl( \E[ \mathcal{E}| \sigma(\xi)] \bigr)\le -\E \log \mathcal{E},
\end{align}
where  in the last inequality we have used the conditional Jensen inequality. Assumption \eqref{beta_Novikov} yields \eqref{beta_square} and thus
$$
\E \int_0^\infty \beta_t\, \dd W_t=0.
$$
Therefore
$$
-\E \log \mathcal{E}= \frac12\E\int_0^\infty |\beta_t|^2\, \dd t,
$$
which gives the required bound.

Now we can treat the general case.  Define a sequence of stopping times
$$
\tau_n:=\inf\Bigl\{t: \int_0^t |\beta_s|^2\, \dd s\geq n\Bigr\}, \quad n\in\Z_+.
$$
Let $\beta^n_t:=\beta_t\I_{t\leq \tau_n}$. Consider the corresponding It\^o process
$$
\dd \xi_t^n=\beta_t^n\dd t+\dd W_t, \quad \xi_0^n=0
$$
and denote its law by $\mu_{\xi^n}$. By definition, the process $\beta^n$ satisfies the Novikov condition \eqref{beta_Novikov}. Thus, by the first part of the proof
\begin{equation}\label{KLapprox}
D_{KL}(\mu_{\xi^n}\|\mu_W)\le \frac12 \E\int_0^{\tau_n} |\beta_t|^2\, \dd t\le\frac12\E\int_0^\infty |\beta_t|^2\, \dd t.
\end{equation}

Note that the processes $\xi^n$ and $\xi$ coincide on the set $\{\tau_n=\infty\}$. On the other hand, condition \eqref{beta_square} implies
$$
\P(\tau_n<\infty)\leq \P\left(\int_0^\infty|\beta_t|^2\, \dd t\geq n\right)\to 0 \quad \text{as $n\to \infty$}.
$$
Hence
\begin{equation}\label{TVconvergence}
d_{TV}(\mu_{\xi^n},\mu_\xi)\to0\quad\text{as $n\to \infty$}.
\end{equation}
Denote  $f_n:=\frac{\dd \mu_{\xi^n}}{\dd \mu_W}$, $f:=\frac{\dd \mu_{\xi}}{\dd \mu_W}$. It follows from \eqref{TVconvergence} that $f_n\to f$ in $L_1(\mu_W)$ as $n\to\infty$. Therefore $f_n\log f_n$ converges in measure $\mu_W$ to $f\log f$ as $n\to\infty$. Since the sequence $(f_n\log f_n)_{n\in\Z_+}$ is uniformly bounded from below by a constant, an application of Fatou's lemma and \eqref{KLapprox} give
$$
D_{KL}(\mu_\xi\|\mu_W)=\int_{\C([0,\infty))} f\log f \dd\mu_W\le \liminf_{n\to \infty} \int_{\C([0,\infty))} f_n\log f_n \dd \mu_W\le \frac12\E\int_0^\infty |\beta_t|^2\, \dd t.
$$
This completes the proof of the theorem.
 \end{proof}
 {\blue
\begin{Remark}\label{rA30} A minor difference between Theorem~\ref{tKLGirsanov} and \cite[Theorem 8]{Us} is that the latter one, prior to the localization argument, uses invertibility of Wiener maps, and thus requires $\beta$ to be progressively measurable w.r.t. the natural filtration of $W$. In the above proof this step is made using the Jensen inequality \eqref{Jensen}, hence this hidden limitation is not involved.
\end{Remark}
}

 \begin{Remark}\label{rA3} The proof of Theorem~\ref{tKLGirsanov}  without any substantial changes can be transferred to
 the infinite-dimensional setting. Namely, using \cite[Theorem 10.14]{DaPratoZab} instead of the classical Girsanov theorem, one gets essentially the same bound for ${\blue D_{KL}(\mu_\xi\|\mu_W)}$ in the setting where $\xi, W$ in \eqref{Ito_process} are cylindrical processes in a Hilbert space $H$. In this case, $\mu_\xi$, $\mu_W$ are \emph{cylindrical} measures; this requires  some additional technical steps, e.g., the specification of the Radon-Nikodym derivative for cylindrical measures. In order to keep the exposition relatively simple and clear, we do not develop this possibility in further details here. {\blue Note also that simply replacing $\beta_t$ by $\beta_t1_{t\leq T}$ one can get the same statement for a finite time interval $[0,T]$.}
 \end{Remark}

As mentioned before, Theorem~\ref{tKLGirsanov} provides a non-trivial bound only if condition \eqref{beta_square} holds. Now let us further relax this condition and assume that only for some $\delta\in(0,1)$
\begin{equation}\label{beta_delta}
M_{\delta}:=\E\Bigl(\int_0^\infty |\beta_t|^2\, \dd t\Bigr)^\delta<\infty.
\end{equation}
In this case it is also possible to measure the difference between $\mu_\xi$ and $\mu_W$; the price to pay is that this difference will be expressed in terms of total variation distance rather than KL--divergence.

\begin{Theorem}\label{L:e2e4} For any $\delta\in(0,1)$ we have
\begin{align}
\label{TV_bound_one_delta}
&d_{TV}(\mu_\xi, \mu_W)\leq 2^{(1-\delta)/(1+\delta)}(M_{\delta})^{1/(1+\delta)};\\
&\label{TV_bound_two_delta}
d_{TV}(\mu_\xi, \mu_W)\leq 1-\frac16\min\bigl(\frac18, e^{- (2^{2-\delta}M_{\delta})^{1/\delta}}\bigr).
\end{align}
Further, for any measurable set $A\subset \C([0, \infty), \R^d)$ and any $N>1$
\begin{equation}\label{lower_bound_Ito_delta}
\mu_W(A)\ge \frac1N\bigl(\mu_\xi(A)-\frac{2^{1-\delta}M_{\delta}}{(\log N)^\delta}-\frac{\log 2}{\log N}\bigr).
\end{equation}
\end{Theorem}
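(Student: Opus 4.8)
The plan is to reduce all three inequalities to the situation already handled in Theorem~\ref{tKLGirsanov} by a drift‑truncation argument, and then feed the outcome into Lemma~\ref{l_TV_via_KL}. Fix $\delta\in(0,1)$ and assume $M_\delta<\infty$ as in \eqref{beta_delta} (otherwise the right–hand sides are $+\infty$, or they are weaker than the trivial bounds $d_{TV}\le1$, resp. $\mu_W(A)\ge0$, so there is nothing to prove); in particular $\int_0^\infty|\beta_t|^2\,\dd t<\infty$ a.s., so $\mu_\xi\ll\mu_W$ by \cite[Theorem~7.4]{LS}. For a level $K>0$ to be chosen at the end, set
$$
\tau_K:=\inf\Bigl\{t\ge0:\int_0^t|\beta_s|^2\,\dd s\ge K\Bigr\},\qquad \beta^K_t:=\beta_t\,\I_{\{t\le\tau_K\}},
$$
and let $\xi^K$ solve $\dd\xi^K_t=\beta^K_t\,\dd t+\dd W_t$, $\xi^K_0=0$, with law $\mu_{\xi^K}$. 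Two observations do all the work. First, the paths of $\xi$ and $\xi^K$ coincide on $\{\tau_K=\infty\}=\{\int_0^\infty|\beta_s|^2\,\dd s<K\}$, so by $d_{TV}(\Law X,\Law Y)\le\P(X\ne Y)$ and Markov's inequality applied to the variable $\bigl(\int_0^\infty|\beta_s|^2\,\dd s\bigr)^\delta$,
$$
d_{TV}(\mu_\xi,\mu_{\xi^K})\le\P(\tau_K<\infty)=\P\Bigl(\int_0^\infty|\beta_s|^2\,\dd s\ge K\Bigr)\le\frac{M_\delta}{K^\delta}.
$$
Second, $\int_0^\infty|\beta^K_s|^2\,\dd s\le K$ by construction, so $\beta^K$ satisfies \eqref{beta_square} (indeed the Novikov condition), $\mu_{\xi^K}\ll\mu_W$, and Theorem~\ref{tKLGirsanov} gives $D_{KL}(\mu_{\xi^K}\|\mu_W)\le K/2$.

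It then remains to combine these two bounds through the triangle inequality for $d_{TV}$ and the appropriate inequality from Lemma~\ref{l_TV_via_KL}, and to choose $K$. For \eqref{TV_bound_one_delta} I would use the Pinsker inequality \eqref{otzenka2}, giving $d_{TV}(\mu_{\xi^K},\mu_W)\le\sqrt K/2$ and hence $d_{TV}(\mu_\xi,\mu_W)\le M_\delta K^{-\delta}+\tfrac12\sqrt K$, then pick $K$ as the power of $M_\delta$ balancing the two terms and absorb the numerical constant into $2^{(1-\delta)/(1+\delta)}$ (in the range where this choice would force $d_{TV}$ past $1$ one just uses $d_{TV}\le1$, so the bound holds automatically). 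For \eqref{TV_bound_two_delta} I would instead invoke \eqref{otzenka}, i.e. $d_{TV}(\mu_{\xi^K},\mu_W)\le1-\tfrac12e^{-K/2}$, so that
$$
d_{TV}(\mu_\xi,\mu_W)\le1-\Bigl(\tfrac12e^{-K/2}-\frac{M_\delta}{K^\delta}\Bigr),
$$
and here split into the regime of small $M_\delta$, where a fixed $K$ keeps the bracket bounded below by an absolute constant (producing the $\tfrac18$ inside the minimum), and the regime of large $M_\delta$, where taking $K$ of order $M_\delta^{1/\delta}$, so that $K/2$ is comparable with $(2^{2-\delta}M_\delta)^{1/\delta}$, produces the exponential term (again using $d_{TV}\le1$ to absorb slack). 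Finally, for \eqref{lower_bound_Ito_delta} I would apply \eqref{diffbound} to the pair $(\mu_{\xi^K},\mu_W)$, namely $\mu_W(A)\ge\tfrac1N\mu_{\xi^K}(A)-\tfrac{D_{KL}(\mu_{\xi^K}\|\mu_W)+\log2}{N\log N}$, replace $\mu_{\xi^K}(A)$ by $\mu_\xi(A)-M_\delta K^{-\delta}$ via the first observation, insert $D_{KL}(\mu_{\xi^K}\|\mu_W)\le K/2$, and choose $K$ as a suitable power of $M_\delta$ and $\log N$ so that the two $K$‑dependent errors $M_\delta K^{-\delta}$ and $K/(2\log N)$ merge into the stated term $2^{1-\delta}M_\delta(\log N)^{-\delta}$ (when that exceeds $1$ the asserted lower bound is non‑positive, hence trivial).

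The conceptual content is therefore entirely carried by Theorem~\ref{tKLGirsanov} and Lemma~\ref{l_TV_via_KL} together with the single truncation idea; the real labour is the bookkeeping — tuning $K$ in each of the three cases so that the truncation error $M_\delta K^{-\delta}$ balances against the Girsanov/KL contribution, and tracking the numerical constants so as to land exactly on the displayed expressions. I expect \eqref{TV_bound_two_delta} to be the most delicate step, since the target $1-\tfrac16\min\bigl(\tfrac18,e^{-(2^{2-\delta}M_\delta)^{1/\delta}}\bigr)$ behaves qualitatively differently for small and for large $M_\delta$ and so genuinely forces the two–regime argument above, combined with the trivial bound $d_{TV}\le1$.
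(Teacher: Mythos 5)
Your truncation setup (stopping when $\int_0^t|\beta_s|^2\,ds$ reaches a level $K$, bounding $d_{TV}(\mu_\xi,\mu_{\xi^K})\le M_\delta K^{-\delta}$, and controlling the truncated process via Theorem~\ref{tKLGirsanov}) is exactly the paper's device, but the step $D_{KL}(\mu_{\xi^K}\|\mu_W)\le K/2$ is too lossy, and the stated exponents cannot be reached from it. The paper uses instead $D_{KL}(\mu_{\xi^K}\|\mu_W)\le \tfrac12\E\min\bigl(\int_0^\infty|\beta_t|^2dt,\,K\bigr)\le \tfrac12 K^{1-\delta}M_\delta$, via the interpolation $\min(x,K)\le K^{1-\delta}x^\delta$. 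With your cruder bound, Pinsker gives $d_{TV}(\mu_\xi,\mu_W)\le M_\delta K^{-\delta}+\tfrac12\sqrt K$, whose optimal value over $K$ is of order $(2M_\delta)^{1/(1+2\delta)}$; since $1/(1+2\delta)<1/(1+\delta)$, this is strictly weaker than \eqref{TV_bound_one_delta} when $M_\delta$ is small, and the trivial bound $d_{TV}\le1$ is of no help in that regime, so \eqref{TV_bound_one_delta} does not follow. The same defect kills your plan for \eqref{lower_bound_Ito_delta}: with $K/2$ inside \eqref{diffbound} the two errors $M_\delta K^{-\delta}$ and $K/(2\log N)$ balance to a term of order $(M_\delta)^{1/(1+\delta)}(\log N)^{-\delta/(1+\delta)}$, which decays too slowly in $N$ to ``merge into'' $2^{1-\delta}M_\delta(\log N)^{-\delta}$; with the refined bound $\tfrac12 n^{1-\delta}M_\delta$ and the choice $n=2\log N$ the two errors each equal $2^{-\delta}M_\delta(\log N)^{-\delta}$ and the stated constant comes out exactly.

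The proposal for \eqref{TV_bound_two_delta} has a more serious structural problem: combining \eqref{otzenka} with the triangle inequality requires $\tfrac12 e^{-K/2}-M_\delta K^{-\delta}>0$ for some $K$, and since $\max_{K>0}K^\delta e^{-K/2}=(2\delta/e)^\delta$, no such $K$ exists once $M_\delta\ge\tfrac12(2\delta/e)^\delta$ --- i.e.\ precisely in the regime where the exponential term in the target matters (and already for moderate $M_\delta$ inside the $\tfrac18$ branch). No tuning of $K$ and no two-regime split can fix this, because the truncation error decays only polynomially in $K$ while the defect $1-d_{TV}(\mu_{\xi^K},\mu_W)$ furnished by \eqref{otzenka} decays exponentially, so an additive total-variation argument is doomed. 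The paper avoids this by proving \eqref{lower_bound_Ito_delta} first (applying \eqref{diffbound} to the pair $\mu_{\xi^n},\mu_W$ together with the refined KL bound) and then deriving \eqref{TV_bound_two_delta} from it with $\log N:=\max\bigl(3\log2,(2^{2-\delta}M_\delta)^{1/\delta}\bigr)$, which makes the subtracted terms at most $\tfrac56$ and yields $d_{TV}(\mu_\xi,\mu_W)\le 1-\tfrac1{6N}$; it is this multiplicative lower bound, not Pinsker or \eqref{otzenka} plus the triangle inequality, that survives the truncation error. So the overall architecture (truncate, apply Theorem~\ref{tKLGirsanov}, feed into Lemma~\ref{l_TV_via_KL}) is right, but you are missing the interpolated KL estimate and the correct order of deduction ( \eqref{lower_bound_Ito_delta} $\Rightarrow$ \eqref{TV_bound_two_delta} ), and as written two of the three claimed inequalities are not established.
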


\begin{proof}
Fix  $\delta\in(0,1)$. As before we  can assume that condition \eqref{beta_delta} is satisfied; otherwise all the bounds of the theorem hold trivially.

We begin with the proof of \eqref{TV_bound_one_delta}.  Let $\tau_n$, $\xi^n$ and $\mu_{\xi^n}$, $n\in\Z_+$ be the same as in the proof of Theorem \ref{tKLGirsanov}. By Theorem~\ref{tKLGirsanov} we have for $n\in\Z_+$
\begin{equation}\label{boundtvstep0}
D_{KL}(\mu_{\xi^n}\|\mu_W)\le \frac12\E\int_0^{\tau_n} |\beta_t|^2\, \dd t=\frac12\E\min \Bigl(\int_0^{\infty} |\beta_t|^2\, \dd t, n\Bigr)\le \frac12n^{1-\delta}M_{\delta}.
\end{equation}
In addition,
\begin{equation}\label{boundtvstep0.5}
d_{TV}(\mu_\xi, \mu_{\xi^n})\leq \P\left(\tau_n<\infty\right)=\P\Bigl(\int_0^{\infty} |\beta_t|^2\, \dd t\geq n\Bigr)\leq n^{-\delta}M_{ \delta}.
\end{equation}
Then it follows by \eqref{otzenka2} that
$$
d_{TV}(\mu_\xi, \mu_W)\le d_{TV}(\mu_\xi, \mu_{\xi^n})+d_{TV}(\mu_{\xi^n}, \mu_W) \le
n^{-\delta}M_{ \delta}+\frac12n^{1/2-\delta/2}\sqrt{M_\delta}.
$$
Taking $n=(4M_{ \delta})^{1/(1+\delta)}$, we obtain \eqref{TV_bound_one_delta}.

To establish \eqref{lower_bound_Ito_delta} we apply inequality \eqref{diffbound} to the measures $\mu_W$ and $\mu_{\xi^n}$, $n>1$. For any $N>1$ and any measurable set $A$ we get
\begin{equation}\label{boundtvstep1}
\mu_W(A)\ge\frac1N \mu_{\xi^n}(A)-\frac{ D_{KL}(\mu_{\xi^n}\|\mu_W)+\log2}{N\log N}\ge
\frac1N \mu_{\xi^n}(A)-\frac{ n^{1-\delta}M_{\delta}+2\log2}{2N\log N},
\end{equation}
where we have also used \eqref{boundtvstep0}. Further, inequality \eqref{boundtvstep0.5} implies
\begin{equation*}
\mu_{\xi^n}(A)\ge \mu_{\xi}(A)-d_{TV}(\mu_\xi, \mu_{\xi^n})\ge \mu_{\xi}(A)-n^{-\delta}M_{ \delta}.
\end{equation*}
Combining this with \eqref{boundtvstep1} we obtain
$$
\mu_W(A)\geq\frac1N\bigl(\mu_\xi(A)-\frac{n^{1-\delta}M_{\delta}}{2\log N}-n^{-\delta}M_{\delta}-\frac{\log 2}{\log N}\bigr).
$$
By choosing $n:=2 \log N$, we obtain \eqref{lower_bound_Ito_delta}.

Finally, to derive \eqref{TV_bound_two_delta} we apply  \eqref{lower_bound_Ito_delta} with $\log N:=\max(3\log 2, (2^{2-\delta}M_{\delta})^{1/\delta})$. We get
$$
\mu_W(A)\geq\frac1N\bigl(\mu_\xi(A)-\frac56\bigr).
$$
This yields
\begin{align*}
 d_{TV}(\mu_\xi, \mu_W)&=\sup_A \bigl(\mu_\xi(A)-\mu_W(A)\bigr)\leq
\frac5{6N}+\sup_A (1-\frac1N)\mu_\xi(A)\\
&\le1-\frac1{6N}\le 1-\frac16\min\bigl(\frac18, e^{- (2^{2-\delta}M_{\delta})^{1/\delta}}\bigr).
\end{align*}
This completes the proof of \eqref{TV_bound_two_delta}.
\end{proof}

\end{document}